\documentclass[reqno,a4paper,12pt]{amsart}
\usepackage{amssymb,enumerate}
	\usepackage{a4wide}

\usepackage[shortlabels]{enumitem}

%
%\usepackage{showlabels}

 %% Fonts
 %%%%%%%%%%%%%%%%%%

\newcommand{\calP}{{\mathcal{P}}}
\newcommand{\calT}{{\mathcal{T}}}
\newcommand{\calR}{{\mathcal{R}}}

\newcommand{\Cent}{\ensuremath{{\rm{C}}}}
\newcommand{\NNN}{\ensuremath{{\mathrm{N}}}}

\newcommand{\bB}{{\mathbf B}}

\newcommand{\bG}{{\mathbf G}}

\newcommand{\bN}{{\mathbf N}}
\newcommand{\bS}{{\mathbf S}}
\newcommand{\bT}{{\mathbf T}}
\newcommand{\bm}{{\mathbf m}}

\newcommand{\bZ}{{\mathbf Z}}

\newcommand{\CC}{{\mathbb{C}}}
\newcommand{\FF}{{\mathbb{F}}}
\newcommand{\oF}{\overline{\mathbb{F}}}

\newcommand{\ZZ}{{\mathbb{Z}}}

\newcommand{\tA}{\mathsf A}
\newcommand{\tB}{\mathsf B}
\newcommand{\tC}{\mathsf C}
\newcommand{\tD}{\mathsf D}
\newcommand{\tE}{\mathsf E}

\newcommand{\cC}{{\mathcal C}}

\newcommand{\lp}{{\ell '}}

\newcommand{\cF}{{\mathcal F}}

\newcommand{\cG}{{\mathcal G}}

\newcommand{\cN}{{\mathcal N}}
\newcommand{\cO}{{\mathcal O}}

\newcommand{\cP}{{\mathcal P}}

 %% Operatoren
 %%%%%%%%%%%%%%%%
\newcommand{\Aut}{\operatorname{Aut}}

\newcommand{\Res}{\operatorname{Res}}
\newcommand{\Ind}{\operatorname{Ind}}
\newcommand{\id}{\operatorname{id}}

\newcommand{\Irr}{\operatorname{Irr}}
\newcommand{\Cl}{\operatorname{Cl}}
\newcommand{\Out}{\operatorname{Out}}

\newcommand{\SC}{{\operatorname{sc}}}
\newcommand{\GL}{\operatorname{GL}}

\newcommand{\CSp}{\operatorname{CSp}}
\newcommand{\Sp}{\operatorname{Sp}}
\newcommand{\SO}{\operatorname{SO}}
\newcommand{\Sym}{{\operatorname{S}}}
\newcommand{\Cy}{\operatorname{C}}

 % General or just extension map
\newcommand{\Irrl}{\mathrm{Irr}_{\ell'}}
\def\norm#1#2{{\operatorname N}_{#1}(#2)}
\def\cent#1#2{{\operatorname C}_{#1}(#2)}
\def\ser#1#2{{\mathcal E}(#1 , #2)}
\def\oo#1{\overline{\overline{#1}}}

 %% Decorated 
 %%%%%%%%%%%%%%%%
\newcommand{\w}{\widetilde}
\newcommand{\wh}{\widehat}

\newcommand{\wG}{\ensuremath{{\w G}}}
\newcommand{\wN}{\ensuremath{{\w N}}}

\newcommand{\Z}{\operatorname Z}

\newcommand{\ovF}{\overline \FF}
\newcommand{\ov}{\overline }
\newcommand{\h}{\mathbf h}
\newcommand{\n}{\mathbf n }
\newcommand{\x}{\mathbf x }

\newcommand{\deq}{:=}
\newcommand{\GF}{{{\bG^F}}}
\newcommand{\wGF}{{{{\w\bG}^F}}}

 %% Greek
 %%%%%%%%%%%%%%%%
\let\al=\alpha
\let\eps=\epsilon
\let\si=\sigma
\let\la=\lambda

\newcommand{\Gal}{\operatorname{Gal}}
\newcommand{\calG}{{\mathcal G}}

 %% theorem style plain --- default
 %%%%%%%%%%%%%%%%%%%%%%%%%%%%%%%%

\newtheorem{thm}{Theorem}[section]
\newtheorem{lem}[thm]{Lemma}

\newtheorem{prop}[thm]{Proposition}

\newtheorem{thmA}{Theorem}

\theoremstyle{definition}

\newtheorem{defn}[thm]{Definition}
\newtheorem{notation}[thm]{Notation}

\theoremstyle{remark}
\newtheorem{rem}[thm]{Remark}

\numberwithin{equation}{section} %% Comment out for sequentially-numbered
\numberwithin{figure}{section} %% Comment out for sequentially-numbered

%%%%%%%%%%%%%%%%%%%%%%%%%%%%%% Textclass specific LaTeX commands.
\makeatletter
\def\restr #1|#2{\left.#1\right\rceil_{#2}}
\def\spann<#1>{\left\langle#1\right\rangle}

\def\Spann<#1>{\Spann@h#1@}
\def\Spann@h#1|#2@{\left\langle\left.#1\vphantom{#2}\hskip.1em\right.\mid\relax #2 \right\rangle}
\def\Set#1{\Set@h#1@}
\def\Set@h#1|#2@{\left\{\left.#1\vphantom{#2}\hskip.1em\,\right.
\mid\relax #2\right\}}
\def\set#1{\set@h#1@}
\def\set@h#1@{\left\{#1\right\}}

\makeatother

\begin{document}

\title{Inductive McKay condition for finite simple groups of type $\tC$}

\date{August 2016}

\author{Marc Cabanes}
\address{ Institut de Math\'ematiques de Jussieu - Paris Rive Gauche,
Boite 7012,
75205 Paris Cedex 13, France.}
\email{marc.cabanes@imj-prg.fr}
\author{Britta Sp\"ath}
\address{BU Wuppertal, Gau\ss str. 20, 42119 Wuppertal, Germany} 
\email{bspaeth@uni-wuppertal.de}

\thanks{ }

\keywords{McKay conjecture, symplectic groups, action of automorphisms on characters}

\subjclass[2010]{Primary 20C15, 20C33; Secondary 20G40}

\begin{abstract}
We verify the inductive McKay condition for simple groups of Lie type $\tC$, namely finite projective symplectic groups. 
This contributes to the program of a complete proof of the McKay conjecture for all finite groups via the reduction theorem of Isaacs-Malle-Navarro and the classification of finite simple groups. 
In an important step we use a new counting argument to determine the stabilizers of irreducible characters of a finite symplectic group in its outer automorphism group. 
This is completed by analogous results on characters of normalizers of Sylow $d$-tori in those groups.
\end{abstract}

\maketitle

%%\pagestyle{myheadings}
%%\markboth{for personal use only}{preliminary}
%%\tableofcontents

%%%%%%%%%%%%%%%%%%%%%%%%%%%%%%%%%%%%%%%%%%%%%%%%%%%%%%%%%%%%%%%%%%%%%%%%%
\section{Introduction}

The McKay conjecture is one of the most important, basic but still open among the counting 
conjectures in the representation theory of finite groups. For $G$ a finite group and $\ell$ a prime, denoting by $\Irr_\lp(G)$ the set of irreducible characters of $G$ of degree prime to $\ell$, the McKay conjecture asserts that $|\Irr_\lp (G)|=|\Irr_\lp (N)|$ where $N$ is the normalizer in $G$ of one of its Sylow $\ell$-subgroups. After the 
reduction theorem of Isaacs-Malle-Navarro \cite{IMN}, showing that this conjecture holds if all simple groups satisfy a stronger version of it, the so-called {\it inductive McKay condition}, several works have recently contributed to the verification of that condition for simple groups. 
After \cite{S12}, \cite{CS13}, \cite{CS15} and \cite{MS15}, it is left open to verify it for odd primes $\ell$ and simple groups of Lie type, whose defining characteristic is some $p\not= \ell$ and whose type is among $\tB$, $\tC$, $\tD$, $^2\tD$, $\tE_6$, $^2\tE_6$, $\tE_7$. 
We solve here type $\tC$. A similar method should apply equally to other types.

As in several other checkings, our starting point is the criterion for the inductive McKay condition introduced in 
\cite[2.12]{S12}. 
The assumptions of this criterion can be divided into a global condition, a local condition and the existence of a bijection. Our
Theorems \ref{MainTh}, \ref{thm_sec6} and \ref{thm_bij} deal separately with each part.

We should mention that Jay Taylor has proved independently Theorem \ref{MainTh} in \cite{T16} with different methods related to generalized Gelfand Graev representations. % By our understanding both methods are likely to generalize to other cases.

\begin{thmA}\label{thmA}
The inductive McKay condition from \cite[\S 10]{IMN} holds for  projective symplectic groups over finite fields.
\end{thmA}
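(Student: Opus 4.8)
The plan is to deduce Theorem~\ref{thmA} from the criterion for the inductive McKay condition of \cite[2.12]{S12}, applied to the universal covering groups of the simple groups concerned. By the reduction theorem \cite{IMN} it suffices to verify that criterion for each simple group $S=\mathrm{PSp}_{2n}(q)$; and by the earlier verifications \cite{S12,CS13,CS15,MS15}, together with the known defining-characteristic case, the case $\ell=2$, and the exceptional isomorphisms of small symplectic groups (e.g.\ $\mathrm{PSp}_2(q)\cong\mathrm{PSL}_2(q)$ and $\mathrm{PSp}_4(3)\cong\mathrm{PSU}_4(2)$), we may assume $n\ge 2$, $q=p^f$ odd, and $\ell$ an odd prime with $p\ne\ell$. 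Write $\bG=\Sp_{2n}(\ovF)$ with Frobenius $F$, so that $\GF=\Sp_{2n}(q)$ is the universal covering group of $S$; let $\wG=\CSp_{2n}(\ovF)$, so $\wGF\supseteq\GF$, and set $D=\langle\wGF,F_p\rangle$, the subgroup of $\Aut(\GF)$ inducing all of $\Aut(S)$. Since type $\tC_n$ has no graph automorphism in odd characteristic, $D$ fully controls the outer automorphism group of $S$.

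Fix the parameter $d$ attached to $(\GF,\ell)$, let $\bS$ be a Sylow $d$-torus of $(\bG,F)$, put $\bN=\norm{\bG}{\bS}$ and $N=\bN^F$; by the structure of Sylow $\ell$-subgroups of $\Sp_{2n}(q)$ one has $\norm{\GF}{P}\subseteq N$ for a Sylow $\ell$-subgroup $P$ of $\GF$, so $N$ is an admissible local subgroup. The criterion \cite[2.12]{S12} then reduces Theorem~\ref{thmA} to checking: (i) the existence of a $D$-equivariant bijection $\Omega\co\Irrl(\GF)\to\Irrl(N)$ which preserves central characters and commutes with the action of $\Irr(\wGF/\GF)$; (ii) maximal extendibility for $\GF\lhd D$ together with $D_\chi=D_{\Omega(\chi)}$ for every $\chi$; and (iii) the analogous extendibility and stabilizer statements for $N\lhd\norm{D}{\bN}$. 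These three packages are exactly Theorems~\ref{MainTh}, \ref{thm_sec6} and \ref{thm_bij} of the present paper: \ref{MainTh} establishes the global part (the action of $\wGF$ and of $F_p$ on $\Irr(\GF)$, the stabilizers $\Aut(S)_\chi$, and maximal extendibility $\GF\to D_\chi$), \ref{thm_sec6} the local counterpart for $N$, and \ref{thm_bij} the compatible equivariant bijection; the vanishing of the relevant cohomological obstructions to maximal extendibility uses the cyclicity of $\wGF/\GF$ and of $\langle F_p\rangle$.

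The main obstacle is Theorem~\ref{MainTh}, the determination of $\Aut(S)_\chi$. Lusztig's Jordan decomposition parametrizes $\Irr(\GF)$ by $\wGF$-conjugacy classes of pairs (a semisimple class in the dual group, a unipotent character of its centralizer), and this parametrization is known to be compatible with automorphisms up to controlled indeterminacies; but the residual action of the diagonal automorphism (arising from $\wGF/\GF\cong\ZZ/2$) and of $F_p$ on the $\GF$-constituents lying above a given $\wGF$-orbit is not transparent --- symplectic groups do possess characters with non-trivial $\wGF/\GF$-stabilizer, and the way $F_p$ permutes the several $\GF$-constituents of a $\wGF$-character is genuinely subtle. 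Rather than describing this action directly, one argues by a counting argument: the number of $D$-orbits on $\Irr(\GF)$ (or the number of fixed points of a prescribed automorphism) is computed in two ways --- once through $\GF$ and once through $\wGF$ via Clifford theory and the already-known action on $\Irr(\wGF)$ --- and the comparison forces the stabilizers into the predicted shape. (Theorem~\ref{MainTh} has also been obtained independently in \cite{T16} using generalized Gelfand--Graev representations.)

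On the local side, Theorem~\ref{thm_sec6} rests on the structure of $N=\bN^F$ as an extension of a torus by a relative Weyl group of wreath-product type, together with $e$- versus $2e$-Harish-Chandra theory for $\Sp_{2n}(q)$: one parametrizes $\Irr(N)$, follows the $D$-action on it, and proves the local maximal extendibility, a delicate bookkeeping of Clifford theory over the torus. Theorem~\ref{thm_bij} then matches the $d$-Harish-Chandra series of $\GF$ with those of $N$ so that all the data in (i) correspond. Finally, the finitely many pairs $(n,q)$ of small rank or small field, and any $S$ with exceptional Schur multiplier, that fall outside the scope of \cite[2.12]{S12} are handled directly, using explicit character tables from the literature and the \textsf{GAP} character table library to exhibit the required bijection by inspection.
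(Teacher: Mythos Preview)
Your overall architecture is the same as the paper's: reduce to the criterion \cite[2.12]{S12}, take $N$ to be the normalizer of a Sylow $d$-torus, and discharge the hypotheses of that criterion via Theorems~\ref{MainTh}, \ref{thm_sec6} and \ref{thm_bij}. The paper's proof is exactly this, with the reductions to odd $\ell$ and odd $p$ coming from \cite{S12,MS15} and \cite[5.2]{CS13}, the admissibility of $N$ from \cite[5.14]{MaH0}, and 2.12(i)--(iii) from \cite[7.2]{CS15} together with cyclicity of $\w G/G$ and $\w N/N$.

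Two points where your write-up drifts from what actually happens. First, your final paragraph about exceptional Schur multipliers and ad hoc \textsf{GAP} checks is superfluous: once $p$ is odd, \cite[Tab.~6.1.3]{GLS3} says the multiplier of $\mathrm{PSp}_{2n}(q)$ is never exceptional, so $\Sp_{2n}(q)$ is the universal cover and there are no residual small cases to treat by hand. Second, your descriptions of the inputs to Theorems~\ref{thm_sec6} and \ref{thm_bij} invoke $d$-Harish-Chandra theory, but the paper does not argue that way on the local side: Theorem~\ref{thm_sec6} is proved by an explicit analysis of the extended Weyl group $V_d$ (elements built from braid-group lifts, an extension map for $H_d\lhd V_d$, and concrete wreath-product combinatorics for $W_\xi$, $W_{\w\xi}$), and Theorem~\ref{thm_bij} is obtained by constructing an $\w N E$-equivariant extension map for $\w C\lhd\w N$ and then quoting the argument of \cite[\S6]{CS15}. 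Your sketch of Theorem~\ref{MainTh} (counting fixed points in two ways, once via conjugacy classes of $G$ and once via $\Irr(\w G)$ through Jordan decomposition) is accurate; that is precisely Propositions~\ref{genfun} and~\ref{gend} compared via the Jacobi triple product.
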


Our proof starts with the checking of the global condition in Sections \ref{sec_2}-\ref{sec_4}. We propose there
a new method to study the stabilizers of irreducible characters, thus establishing the assumption from Theorem~2.12(v) in \cite{S12}.
In the types $\tB$, $\tC$ and $\tE_7$
we show that this condition holds if and only if two numbers of invariant characters coincide, see Proposition~\ref{prop_stab_counting}. 
Thanks to Brauer's permutation Lemma, one of those numbers can be computed by counting conjugacy classes invariant under certain automorphisms. The other number is also determined in a very elementary way. We provide explicit formulas in the case of a group $G=\bG^F$ where $\bG$ is a symplectic group over $\ov \FF_p$ and $F$ is a Frobenius endomorphism associated with the subfield $\FF_q$ of $\ov \FF_p$.

Then Sections \ref{sec_loc_gen}--\ref{loc_reg} consider the characters of the relevant local subgroups, namely $N=\norm{\bG}{\bS}^F$ where $\bS$ is a  so-called Sylow $d$-torus of $\bG$ for $d$ the order of $q$ mod $\ell$. The groups $\cent{\bG}{\bS}^F$ and $\norm{\bG}{\bS}^F/\cent{\bG}{\bS}^F$ have a well-known ``generic" description (see \cite{BM92}), but knowing enough of the structure of $N=\norm{\bG}{\bS}^F$ to get decisive information on $\Irr (N)$ requires nevertheless a lot of specific work.
The general strategy used here is the one of Section 5 of \cite{CS15}. Most of the additional effort is devoted to the structure of groups closely related to the extended Weyl groups of type $\tC$, using certain elements defined in the associated braid group.% but the groups occurring are different and have to be newly investigated. %Many considerations are common to types $\tB$ and $\tC$.

As a corollary of the latter considerations we obtain a bijection for $\w\bG^F$ (a conformal symplectic group) as required in the criterion of \cite[2.12]{S12}, see Theorem \ref{thm_bij}.
In Section \ref{end} we also assemble our previous results to conclude our proof of Theorem~\ref{thmA}.

\medskip

\noindent{\bf Acknowledgement:} The first author thanks the working group ``Algebra und Zahlentheorie" in Wuppertal for its hospitality. 
Both authors thank also the Bernoulli Centre at the EPFL Lausanne for their nice stay.
\medskip

%%%%%%%%%%%%%%%%%%%%%%%%%%%%%%%%%%%%%%%%%%%%%%%%%%%%%%%%%%%%%%%%%%%%%%%%%

\section{Notation and general considerations} \label{sec_2}
We first explore quite generally how to ensure assumption 2.12(v) of Theorem~2.12 in \cite{S12} for certain simple groups of Lie type. This assumption requires that given a prime $\ell$ every $\chi\in\Irr_{\ell'}(G)$ has a stabilizer in $\Out(G)$ of a specific structure, where $G$ is the universal covering group of the simple group considered. Note that in the case of a simple group of Lie type $\tC$, $\tB$ or $\tE_7$, we have $\Out(G)\cong C_2\times C_m$ ($m\geq 1$) where the first factor corresponds to the so called ``diagonal" automorphisms and the second summand corresponds to field automorphisms. The condition in \cite[2.12(v)]{S12} then clearly reduces to the fact that stabilizers of elements of $\Irr (G)$ in $\Out(G)$ are subproducts of the above direct product.

Let $\bG_{\SC}$ be a simple simply connected algebraic group over $\ov\FF_p$ ($p$ a prime). Let $F_0\colon\bG_{\SC}\to\bG_{\SC}$ a Frobenius endomorphism defining $\bG_\SC$ over $\FF_p$. Let $F_1$ be a power of $F_0$ and $F$ a power of $F_1$, thus inducing an automorphism of $\bG_{\SC}^F$. Denote $G=\bG_{\SC}^F$ and assume it is the universal covering group of the simple group $G/\Z (G)$. We assume fixed $(\bG_\SC ,F)$; only $F_1$ will be allowed to vary.
 
 Let $\bG_\SC\to \w\bG$ be a regular embedding endowed with a Frobenius and field automorphism extending our initial $F$, $F_1$ (see the next section for an explicit choice in the relevant cases). Denote $\w G=\w \bG^F$. Let $\delta\colon G\to G$ be a non-interior diagonal automorphism, that is the conjugation by an element of $\w G\setminus G\cent{\w G}{G}$.

 Recall that for a finite group $H$, $\Cl (H)$ denotes the set of conjugacy classes and $\Irr (H)$ the set of irreducible (ordinary) characters. As above we denote by $\Cl(H)^A$ the $A$-invariant conjugacy classes of $H$, whenever a group $A$ acts on $H$ and $\Cl(H)^a$ for $\Cl(H)^{\spann <a>}$ for any $a\in A$. 
 
 \begin{defn}\label{c} Let
$c(\delta ,F_1)=|\Cl (G)^{\delta F_1}|$ and let $d(\w G ,F_1)$ be the number of $F_1$-stable $\Irr (\w G /G)$-orbits in $\Irr (\w G)$ of length $|\w G/G|$. (Here the action of $\Irr (\w G /G)$ on $\Irr (\w G)$ is given by multiplication with the linear characters.)
 \end{defn}

%\begin{rem}\label{remc} If $x=su$ is the Jordan decomposition of an element such that $\delta F_0(x)\in x^G$, then $s^G\in\Cl (G)^{F_0}$ since $\delta (s)\in s^G$ (since induced by conjugacy by some element of a maximal torus of a regular embedding) \end{rem} 

%Let $\bG_\SC\to \w\bG$ a regular embedding endowed with a Frobenius and field-graph automorphism extending our initial $F$, $F_0$. Let $(\w\bG^*, F)$ be dual to $(\w\bG,F)$ and ${F_0}$ denoted the same. Denote $\w G=\w \bG^F$ and $\w G^*=\w \bG^*{}^F$. If $s\in \w G^*$ is semi-simple, denote by $A(s)$ the group of connected components of the stabilizer of $s\Z ({\w\bG^*})$ in $\w\bG^*$.

%%%%%%%%%%%%%%%%%%%%%%%%%%%%%%%%%%%%%%%%%%%%%%%%%%%%%%%%%%%%%%%%%%%%%%%%%\section{First reduction} \label{sec3}

\noindent

The relevance to our purpose comes from the following

 \begin{prop} \label{prop_stab_counting}
	Assume that the root system of $\bG_{\SC}$ is of type $\tB$, $\tC$ or $\tE_7$, and that $p\not= 2$. Then the following two statements are equivalent: 
	\begin{enumerate}
	\item $c(\delta ,{F_1})=|\w G:G|^{-1}d(\w G ,F_1)$
	for any pair $(\delta ,{F_1})$ as above.
	\item Assumption 2.12.(v) of \cite{S12} holds for $G$, i.e. 
for	every $\chi \in \Irr(G)$ and $(\delta ,{F_1})$, one can have $\chi^{F_1}=\chi^{\delta}$ only if $\chi=\chi^{F_1}=\chi^{\delta}$. 
	\end{enumerate}
 \end{prop}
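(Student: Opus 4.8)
The plan is to rewrite each of the assertions in (1) and (2) in terms of the same pair of subsets of $\Irr(G)$, one always contained in the other, and to conclude that both (1) and (2) amount to these subsets coinciding for every admissible $(\delta,F_1)$.

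I would first record some bookkeeping on automorphisms. As $\cent{\w G}{G}$ centralises $G$, it acts trivially on $\Irr(G)$; since $\delta$ generates the group of order two $\w G/G\cent{\w G}{G}$, the automorphism $\delta^{2}$ of $G$ is inner, so $\chi^{\delta^{2}}=\chi$ for every $\chi\in\Irr(G)$, whence $\chi^{\delta^{-1}}=\chi^{\delta}$, and $\chi^{\delta}=\chi$ exactly when $\chi$ is $\w G$-invariant. With these remarks, assumption 2.12(v) for a fixed pair $(\delta,F_1)$ says precisely that the (automatic) inclusion
\[
\{\chi\in\Irr(G)\mid\chi^{F_1}=\chi=\chi^{\delta}\}\ \subseteq\ \{\chi\in\Irr(G)\mid\chi^{F_1}=\chi^{\delta}\}
\]
is an equality, and this can be tested on cardinalities.

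Next I would identify those two cardinalities with the two sides of (1). For the left side, Brauer's permutation lemma applied to the automorphism $\delta F_1$ of $G$ gives
\[
c(\delta,F_1)=|\Cl(G)^{\delta F_1}|=|\Irr(G)^{\delta F_1}|=\bigl|\{\chi\in\Irr(G)\mid\chi^{F_1}=\chi^{\delta^{-1}}=\chi^{\delta}\}\bigr|.
\]
For the right side I would run Clifford theory along $G\trianglelefteq\w G$, using two structural facts about the regular embedding: that $\w G/G$ is cyclic (isomorphic to $\FF_q^{\times}$), and that $[\w G:G\cent{\w G}{G}]=2$. By the first, every $\w G$-invariant $\chi\in\Irr(G)$ extends to $\w G$, and by Gallagher's theorem its $|\w G:G|$ extensions are pairwise distinct and form one $\Irr(\w G/G)$-orbit, necessarily of the maximal length $|\w G/G|$. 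Conversely, if $\chi$ is not $\w G$-invariant, then $G\cent{\w G}{G}\le(\w G)_\chi\subsetneq\w G$ and $[\w G:G\cent{\w G}{G}]=2$ force $(\w G)_\chi=G\cent{\w G}{G}$; as $\chi$ then extends to this index-$2$ subgroup, a short computation with the (irreducible) inductions $\Ind_{G\cent{\w G}{G}}^{\w G}$ of the extensions shows that every $\Irr(\w G/G)$-orbit meeting the characters over $\chi$ has length $\tfrac12|\w G/G|<|\w G/G|$. Hence restriction to $G$ identifies the $\Irr(\w G/G)$-orbits of length $|\w G/G|$ with the $\w G$-invariant irreducible characters of $G$, and (restriction being $F_1$-equivariant, and $F_1$ permuting the extensions of any $F_1$-fixed $\chi$) such an orbit is $F_1$-stable exactly when the corresponding character of $G$ is $F_1$-invariant. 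Counting the characters in these orbits, $|\w G:G|$ per orbit, yields
\[
|\w G:G|^{-1}d(\w G,F_1)=\bigl|\{\chi\in\Irr(G)\mid\chi^{\delta}=\chi=\chi^{F_1}\}\bigr|.
\]

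Comparing the last two displays, the equality in (1) holds for a given $(\delta,F_1)$ if and only if the two subsets in the first display coincide, i.e. if and only if 2.12(v) holds for that pair; quantifying over all admissible $(\delta,F_1)$ gives the equivalence of (1) and (2). The hard part will be the Clifford-theoretic step, and inside it the crucial point is the claim $[\w G:G\cent{\w G}{G}]=2$ — that there is, up to inner automorphisms, a single non-trivial diagonal automorphism — which is exactly where the hypotheses $p\neq2$ and that the root system is of type $\tB$, $\tC$ or $\tE_7$ (so that $\Z(\bG_\SC)$ is cyclic of order $2$) enter; one should also check that nothing degenerates for very small $q$.
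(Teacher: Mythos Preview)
Your proof is correct and follows essentially the same route as the paper: identify $c(\delta,F_1)$ with $|\Irr(G)^{\delta F_1}|$ via Brauer's permutation lemma, identify $|\w G:G|^{-1}d(\w G,F_1)$ with $|\Irr(G)^{\langle\delta,F_1\rangle}|$ via Clifford theory for $G\lhd\w G$, and then observe that the inclusion $\Irr(G)^{\langle\delta,F_1\rangle}\subseteq\Irr(G)^{\delta F_1}$ is an equality precisely when (2) holds. Your explicit framing through the set inclusion and the more detailed Clifford-theoretic justification of the orbit lengths (using $[\w G:G\cent{\w G}{G}]=2$) make the argument a bit more self-contained than the paper's version, but the underlying ideas are the same; your worry about small $q$ is unnecessary, as none of the structural inputs depend on $q$ beyond $p\neq 2$.
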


\begin{proof} We use the same letters to denote the images of $\delta$ and $F_1$ in the outer automorphism group. Note that in the types concerned the outer automorphism group is $\spann<\delta>\times \spann<F_0>$ where $\delta$ (of order 2) induces a non-trivial diagonal outer automorphism, which corresponds to the action of $\w G$ on $G$ .

We also note that $|\w G:G|^{-1}d(\w G ,F_1)= |\Irr (G)^{\spann<\delta ,{F_1}>}| $. 
Indeed since $\w G$ acts on $\Irr (G)$ by $\delta$ and characters of ${\w G}$ restrict to $G$ without multiplicity, the set $\Irr (G)^{\delta }=\Irr (G)^{\w G }$ is the image by restriction of $\Irr'(\w G)$, the set of characters of $\w G$ that restrict irreducibly to $G$. The fibers have cardinality $|\w G:G|^{}$. The restrictions that are ${F_1}$-fixed are the ones that are the restriction of an element of $\Irr(\w G)$ which is sent by ${F_1}$ to a multiple by a linear character of $\w G/G$. This gives our claim.

Now (1) is equivalent to the equality $|\Cl (G)^{\delta {F_1}}|=|\Irr (G)^{\spann<\delta ,{F_1}>}|$. Thanks to
 Brauer's permutation lemma (see \cite[6.32]{Isa}), (1) is then equivalent to 
 \begin{align*} \label{eq}
 	|\Irr (G)^{ \delta {F_1}}|&=|\Irr (G)^{\spann<\delta ,{F_1}>}|.\end{align*}

We now check the equivalence of the above with (2).
 A subgroup $H$ of $\spann<\delta>\times E$ can be different from a product of subgroups of each of those factors only if it projects non-trivially on the first summand and $\delta\notin H$. Let $F_1$ be a generator of the projection of $H$ to $E$. If $H$ has not the required structure it contains $\delta {F_1}$ for some non-trivial $\delta$. If (1) is assumed then
 $\Irr (G)^{\delta^{ } {F_1}}=\Irr (G)^{\spann<{\delta ^{ } , {F_1}}>}$ as seen before. So, if $H$ is now the stabilizer of a character, then this character is $\delta$-invariant. A contradiction showing that (1) implies (2). The converse is trivial. 
\end{proof}
%Note that the proof uses the fact that the group $D$ of field automorphisms acts trivially on the centre: This makes necessary to restrict ourselves to the types given. 

The number $d(\w G ,F_1)$ from Definition~\ref{c} is accessible via the following counting in the dual group. Let $\w\bG^*$ be dual to $\w\bG$ with associated endomorphisms $F_1$ and $F$ in the sense of \cite[8.4]{CE04}. If $s\in\w\bG^*$ we denote by $A(s)$ the group of connected components of the stabilizer of $s \Z ({\w\bG^*})$ in $\w\bG^*$. We abbreviate $\w G^*=\w\bG^*{}^F$.

 \begin{lem}\label{n(G)} Keep the same hypotheses as in Proposition~\ref{prop_stab_counting} above. Then $d(\w G ,F_1)$ is the number of $\w G^*$-conjugacy classes of pairs $( s,\la )$ where $s\in\w G^*$ is semi-simple, $\lambda\in \ser{\cent{\w G^*} s^F}1$ (unipotent characters) is such that the stabilizer of $\la$ under $A(s)^F$ is trivial, and the $\w G^*$-conjugacy class of $(s\Z ({\w\bG^*}),\la )$ is ${F_1}$-stable.
 \end{lem}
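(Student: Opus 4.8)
The plan is to translate the description of $d(\w G ,F_1)$ given in the proof of Proposition~\ref{prop_stab_counting} (namely $d(\w G ,F_1) = |\w G:G| \cdot |\Irr(G)^{\langle\delta,F_1\rangle}|$, equivalently the number of $F_1$-stable $\Irr(\w G/G)$-orbits of maximal length $|\w G/G|$ in $\Irr(\w G)$) into the dual side via Lusztig's Jordan decomposition of characters for the connected reductive group $\w\bG$. First I would recall that since $\w\bG$ has connected centre, $\ser{\w G^*}{s}$ consists of a single rational series for each semisimple class, and Jordan decomposition gives a bijection $\Irr(\w G) \longleftrightarrow \coprod_{(s)} \ser{\cent{\w G^*}{s^F}}{1}$ where $(s)$ runs over $\w G^*$-conjugacy classes of semisimple elements and, because $\cent{\w\bG^*}{s}$ is connected, the unipotent characters on the right are honest characters of $\cent{\w G^*}{s^F}$. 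Under this bijection, tensoring by a linear character $\wh z \in \Irr(\w G/G)$ (dual to the class of $z \in \Z(\w\bG^*)^F / (\text{something})$, via $\Irr(\w G/G) \cong (\Z(\w\bG^*)^F)$ up to the relevant identifications) corresponds on the dual side to multiplying $s$ by $z$, i.e. $(s,\la) \mapsto (sz,\la)$, with $\la$ transported along the canonical isomorphism $\cent{\w G^*}{s^F} \to \cent{\w G^*}{(sz)^F}$. Hence the $\Irr(\w G/G)$-orbit of the character labelled by $(s,\la)$ has length equal to $|\w G:G|$ divided by the size of the stabilizer of $(s,\la)$ under the action of the group $\{z \in \Z(\w\bG^*)^F : sz \text{ is } \w G^*\text{-conjugate to } s\}/(\ldots)$; one identifies this stabilizer with $\Irr(A(s)^F)$ acting on $\la$ through the natural action of $A(s)^F$ on $\ser{\cent{\w G^*}{s^F}}{1}$, so the orbit has full length $|\w G:G|$ precisely when the stabilizer of $\la$ in $A(s)^F$ is trivial. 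This is the standard "multiplicity/stabilizer" computation going back to Lusztig and used e.g. in \cite{CE04}; I would cite it rather than redo it.

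Second, I would deal with the $F_1$-stability. The endomorphism $F_1$ acts compatibly on $\w\bG$ and on $\w\bG^*$ (this is part of the data of a dual pair with compatible endomorphisms, \cite[8.4]{CE04}), hence it acts on $\w G^*$-conjugacy classes of pairs $(s\Z(\w\bG^*),\la)$, and Jordan decomposition can be chosen $F_1$-equivariant — here one must be a little careful, since Jordan decomposition is only canonical up to the choice in each Lusztig series, but for groups with connected centre of the relevant types (symplectic-type, where there are no problems with the Mackey formula / with disconnected centralizers), the parametrisation is unique enough to be $\langle F_1\rangle$-equivariant; again this is available in the literature. Therefore an $\Irr(\w G/G)$-orbit of characters of $\w G$ is $F_1$-stable if and only if the corresponding $\w G^*$-orbit of pairs $(s\Z(\w\bG^*),\la)$ is $F_1$-stable. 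Combining the two paragraphs: $d(\w G,F_1)$, which counts $F_1$-stable $\Irr(\w G/G)$-orbits of length $|\w G:G|$, equals the number of $\w G^*$-conjugacy classes of pairs $(s,\la)$ with $s \in \w G^*$ semisimple, $\la \in \ser{\cent{\w G^*}{s^F}}{1}$ having trivial stabilizer in $A(s)^F$, and with $F_1$-stable class of $(s\Z(\w\bG^*),\la)$ — which is exactly the claimed formula.

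The step I expect to be the main obstacle is the precise bookkeeping in the first paragraph: pinning down the exact identification of $\Irr(\w G/G)$ with a subquotient of $\Z(\w\bG^*)^F$ and checking that, under Jordan decomposition for $\w\bG$ (connected centre), multiplication by a linear character really corresponds to multiplication of $s$ by the dual central element, together with the identification of the resulting orbit-stabilizer with the stabilizer of $\la$ in $A(s)^F$ rather than in some a priori larger group $\cent{\w\bG^*}{s}^F / \cent{\w\bG^*}{s}^{\circ F}$. The subtlety is that $A(s)$ here is defined as the component group of the stabilizer of $s\Z(\w\bG^*)$ (not of $s$ itself), so one works with $\cent{\w\bG^*}{s\Z(\w\bG^*)}$; for symplectic-type groups $\w\bG^*$ is $\mathrm{SO}$-type, $\cent{\w\bG^*}{s}$ is connected, and $A(s)$ measures exactly the failure of $s$ and its $\Z(\w\bG^*)^F$-translates to be $\cent{\w\bG^*}{s}^F$-conjugate, which is what makes the counting clean. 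Once these identifications are in place the rest is a direct transport of structure, so I would state the lemma as essentially a repackaging of known results (Lusztig's Jordan decomposition plus the action of $\Irr(\w G/G)$ on it) specialised to the situation at hand, referring to \cite{CE04} for the general machinery and only spelling out the $F_1$-equivariance and the length-$|\w G:G|$ condition explicitly.
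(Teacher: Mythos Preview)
Your approach is essentially the paper's: Jordan decomposition for $\w\bG$ (connected centre), the correspondence between $\Irr(\w G/G)$-multiplication and $\Z(\w\bG^*)^F$-translation of $s$ (\cite[8.26]{CE04}), the criterion $A(s)^F_\lambda = 1$ for irreducible restriction to $G$ (\cite[5.1]{L88}), and $F_1$-equivariance of the parametrisation (\cite[3.1]{CS13}). The paper simply cites these three facts and reads off the statement.

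There is, however, a bookkeeping slip in your write-up. You open by equating $d(\w G,F_1) = |\w G{:}G|\cdot|\Irr(G)^{\langle\delta,F_1\rangle}|$ with ``the number of $F_1$-stable $\Irr(\w G/G)$-orbits of maximal length'', but these differ by a factor $|\w G{:}G|$: each such orbit lies over a single element of $\Irr(G)^{\langle\delta,F_1\rangle}$ and contains $|\w G{:}G|$ characters. In your final sentence you then use the \emph{orbit} count and set it equal to the number of $\w G^*$-classes of pairs $(s,\lambda)$. But Jordan decomposition matches individual characters of $\w G$ with classes of pairs $(s,\lambda)$; it is $\Irr(\w G/G)$-orbits of characters that correspond to classes of pairs $(s\Z(\w\bG^*),\lambda)$. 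Since the lemma's right-hand side counts pairs $(s,\lambda)$, you need the \emph{character} count, not the orbit count. The paper's proof avoids this by working with characters throughout: it recalls from the proof of Proposition~\ref{prop_stab_counting} that $d(\w G,F_1)$ equals the number of characters of $\w G$ which restrict irreducibly to $G$ and whose $F_1$-image is a linear twist, and then translates each of these two conditions on $\chi_{s,\lambda}$ directly into the stated condition on $(s,\lambda)$. Once you fix this factor, your argument is the paper's argument.
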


\begin{proof} We have seen in the proof of Proposition~\ref{prop_stab_counting} that $d(\w G ,F_1)$ is the number of elements of $\Irr (\w G)$ that restrict irreducibly to $G$ and whose image under $F_1$ is a multiple by a (linear) character of $\w G/G$. 
Through Lusztig's Jordan decomposition of characters of $\w G$, the elements of $\Irr (\w G)$ are parametrized $(s,\lambda )\mapsto 
\chi_{s,\lambda}$ by $\w G^*$-conjugacy classes of pairs $(s,\lambda )$ with $s\in \w\bG^*{}^F_{\rm ss}$, $\lambda\in \ser{\cent{\w G^*} s^F}1$.
The action of $F_1$ is then given by acting on the pair $(s,\lambda )$ (see \cite[3.1]{CS13}). The character $\chi_{s,\lambda}$ restricts irreducibly to $G$ if and only if $A(s)^F_\la =1$ (see \cite[5.1]{L88}). The action of linear characters of $\w G/G$ also corresponds to $\Z ({\w\bG^*})^F$-translation of $s$ (see \cite[8.26]{CE04}). This gives our claim. 
\end{proof}

\section{The global condition in Type $\tC$} \label{sec_4}
In this section we prove Theorem \ref{MainTh} by verifying
the statement of Proposition~\ref{prop_stab_counting}(1) in the particular case where $\bG_{\SC}$ has a root system of type $\tC_l$. %Hence we compute in the following the two numbers involved in the equation.
Accordingly, from now on we assume $\bG_{\SC} =\Sp_{2n}(\ovF_q)$, $\w \bG=\CSp_{2n}(\ovF_q)$, $G=\Sp_{2n}(q)=\GF$ and $\w G:=\CSp_{2n}(q)$.
 % and conclude in a final part with a comparison of those computed numbers. 
 
\begin{thm}\label{MainTh} Let $\chi\in\Irr(G)$. 
	For any field automorphism $F_1$ and any diagonal automorphism $\delta$ of $G$ the equality $\chi^{F_1}=\chi^{\delta}$ implies $\chi=\chi^{F_1}=\chi^{\delta}$.

\end{thm}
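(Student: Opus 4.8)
By Proposition~\ref{prop_stab_counting} applied with $\bG_{\SC}=\Sp_{2n}$, the assertion of Theorem~\ref{MainTh} is equivalent to the identity $c(\delta,F_1)=|\w G:G|^{-1}d(\w G,F_1)$ for every pair $(\delta,F_1)$, where $|\w G:G|=q-1$; so everything reduces to evaluating the integers $c(\delta,F_1)=|\Cl(\Sp_{2n}(q))^{\delta F_1}|$ and $d(\w G,F_1)$ (the latter via Lemma~\ref{n(G)}) and checking they agree after dividing by $q-1$. Write $q=p^f$, $F_1=F_0^{a}$ with $a\mid f$, and $q_0=p^{a}$. Since $\Out(G)=\spann<\delta>\times\spann<F_0>$ is abelian, I would first dispose of the case $f/a$ odd: there $\spann<\delta,F_1>=\spann<\delta F_1>$ is cyclic, so $\Cl(G)^{\delta F_1}=\Cl(G)^{\spann<\delta,F_1>}$, and the identity follows from Brauer's permutation lemma exactly as in the proof of Proposition~\ref{prop_stab_counting}. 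So from now on one may assume $f/a$ is even.

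\textbf{The class count.} I would compute $c(\delta,F_1)$ from Wall's parametrization of the conjugacy classes of $\Sp_{2n}(q)$: a class is encoded by its elementary divisors --- a family of monic irreducible polynomials over $\FF_q$ stable under $X\mapsto X^{-1}$ and weighted by multiplicities, a partition attached to each polynomial or inverse-pair, and, on the symplectic summands carried by $X-1$ and $X+1$, a label recording which $\cent{\bG}{u}^{F}/\cent{\bG}{u}^{\circ F}$-orbit of quadratic refinements occurs. The field automorphism $F_1$ acts through $\Gal(\FF_q/\FF_{q_0})$ on the polynomials and compatibly on the labels, while $\delta$ fixes all the polynomial-and-partition data and acts on the labels over $X\mp1$ by an involution that is fixed-point-free on some orbits and trivial on others. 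Consequently $c(\delta,F_1)$ splits as a ``global'' contribution depending only on $F_1$, coming from the polynomials $\ne X\mp1$, times a small ``local'' contribution at $\{X-1,X+1\}$ depending on both $\delta$ and $F_1$; encoding this as a generating function in $q$ gives an explicit formula for $c(\delta,F_1)$.

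\textbf{The dual count.} By Lemma~\ref{n(G)}, $|\w G:G|^{-1}d(\w G,F_1)$ counts $\w G^*$-classes of pairs $(s,\la)$ with $s$ semisimple, $\la\in\ser{\cent{\w G^*}{s}^{F}}1$ having trivial $A(s)^{F}$-stabilizer, and $(s\Z(\w\bG^*),\la)$ in an $F_1$-stable class, where dividing $d(\w G,F_1)$ by $q-1=|\Z(\w\bG^*)^{F}|$ has the effect of passing from $s$ to its image $\bar s$ in the adjoint quotient $\SO_{2n+1}$. Because $\w\bG=\CSp_{2n}$ has connected centre, $\cent{\w\bG^*}{s}$ is connected, a product of general linear factors over the eigenvalue-pairs $\ne\pm1$, a factor of type $\tB$ over the eigenvalue $1$, and a factor of type $\tD$ or ${}^2\tD$ over $-1$; its unipotent characters are tuples of partitions together with a symbol for each orthogonal factor, the group $A(s)^{F}$ (of order at most $2$) acts via the graph automorphism of the type-$\tD$ factor, and the condition $A(s)^{F}_\la=1$ singles out the $\la$ whose type-$\tD$ component is labelled by a degenerate symbol. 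Running over all $\bar s$ and imposing $F_1$-stability coefficientwise then produces a second explicit generating function for $|\w G:G|^{-1}d(\w G,F_1)$.

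\textbf{Comparison --- the main obstacle.} It remains to show the two generating functions coincide. The contributions of the polynomials $\ne X\mp1$ should agree term by term, reflecting the familiar $F_1$-equivariant duality between conjugacy data and semisimple-plus-unipotent-character data on general linear and unitary blocks. The real work --- and the step I expect to be the main obstacle --- is the local factor at $\{X-1,X+1\}$: for each way of distributing the $\pm1$-eigenvalue data and each Galois parameter one must prove that the number of $\delta F_1$-fixed quadratic refinements on the symplectic side equals the number of $F_1$-stable unipotent characters of the corresponding type-$\tB\times\tD$ centralizer with degenerate type-$\tD$ label. Both are small finite quantities, but the diagonal twist $\delta$ enters on one side and the component group $A(s)^{F}$ on the other, so the equality is not formal; establishing it uniformly --- rather than through a lengthy case analysis in the various parities --- is precisely where the new counting argument advertised in the introduction is needed, and I would aim to realize it as a single $\spann<\delta,F_1>$-equivariant bijection between the two local index sets.
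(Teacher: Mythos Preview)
Your reduction via Proposition~\ref{prop_stab_counting} and Lemma~\ref{n(G)}, the separation of both counts into an ``away from $\pm1$'' factor and a local factor at $\{X-1,X+1\}$, and the identification of the latter as the crux are all exactly what the paper does. Two refinements you will need to make the computations go through: on the class side, the action of $\delta$ on the quadratic labels is not merely ``fixed-point-free on some orbits and trivial on others'' --- one must prove (this is Proposition~\ref{act}(2), and it takes a genuine matrix computation with explicit elements of $\CSp_{2n}$) that $\delta$ flips the Witt type $\Psi_\eps(j)$ precisely when the multiplicity $\bm(\eps,2j)$ is \emph{odd}; on the dual side, the unipotent characters of the type-$\tD$ factor with trivial $A(s)^F$-stabilizer are precisely the twin characters attached to degenerate symbols, and there are $2\,\mathrm{p}(m)$ of them in rank $2m$.

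Where your proposal diverges from the paper is the final comparison. You aim for an equivariant bijection between the two local index sets; the paper does not construct one. Instead it packages each side as a generating function in a formal variable $t$ (Propositions~\ref{genfun} and~\ref{gend}), factors out the common ``away from $\pm1$'' piece $f_0(t,q_1)=\prod_{i\ge1}(1-t^i)^2/(1-q_1t^i)$, and is left with the identity
\[
\sum_{j\ge0}t^{\,j^2+j}\;=\;\prod_{i\ge1}(1-t^{2i})(1+t^{2i})^2,
\]
which is a specialization of the Jacobi triple product. So the ``new counting argument'' advertised in the introduction is a $q$-series identity, not a bijection; your bijective plan would in effect be a combinatorial proof of this specialization of Jacobi, which is possible but considerably more laborious than simply quoting it. Your case split on the parity of $f/a$ is also unnecessary --- the generating-function argument handles all $F_1$ uniformly, the only input being $q_1=|\FF_q^{F_1}|$.
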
 

Before computing the number of $\delta F_1$-invariant $G$-conjugacy classes in $G$ we describe in general their parametrization and the action of automorphism in terms of their parameters. 
In a second part of this section we compute the relevant number of characters of $\w G$.

\subsection{Conjugacy classes}
We first recall the parametrization of conjugacy classes of $G=\GF =\Sp_{2n}(q)$ (see \cite[2.6]{W63}, \cite[\S 3]{Mi69}). 

In the following we denote by $\cF_q$ the set $\ov\FF_p^\times/\spann<F>$ of orbits of $\ov\FF_p^\times$ under the map $F$ taking the $q$-th power. For any $P\in \cF_q$, we denote by $ P^{-1}\in\cF_q$ the set of inverses of elements of $P$. 

Let $V=(\FF_q)^{2n}$ be the symplectic space over $\FF_q$ whose form $\langle \_ ,\_ \rangle_V$
is given by $\left( \begin{array}{cc} 0&-J_n \\ J_n & 0 \\ \end{array} \right)$ with $J_n=(\delta_{i,n+1-j})_{1\leq i,j\leq n}$. 
We identify $G$ with $\Sp (V)$. 

We recall the parametrization of $\Cl (G)$ following the approach of Cikunov-Milnor (see \cite[\S 3]{Mi69}): 
The conjugacy class containing $x\in G$ corresponds to $(\bm,\Psi_+,\Psi_-)$, where $\bm:\cF_q\times \ZZ_{\geq 1}\to\ZZ_{\geq 0}$ and $\Psi_+,\Psi_-\colon \ZZ_{\geq 1}\to
\FF_q^\times /(\FF_q^\times{})^2$ are defined at the class of $x$ as follows. First $\bm$ is given by the Jordan normal form of $x $ seen as an element of $\GL_{2n}(q)$, namely $\bm(P,j)$ is the number of Jordan blocks of size $j$ associated to any eigenvalue of $x$ belonging to $P$. %Note that $\FF_q^\times /(\FF_q^\times{})^2$ parametrizes the type of a symmetric non-degenerate form on an even-dimensional space over $\FF_q$.

Let $\eps =\pm 1$, then $\Psi_\eps$ is defined as follows. For $j\geq 1$, let $K_{j}(\eps):=\ker( (x-\eps )^j)$ and $V_{2j}'(\eps ):=K_{2j}(\eps)/(K_{2j-1}(\eps)+(x- \eps )K_{2j+1}(\eps))$, an $\bm(\eps ,2j)$-dimensional $\FF_q$-space. Then via $(\bar u,\bar v)_{j,\eps}=
\langle (x-x^{-1})^{2j-1}u,v\rangle_V$ for any $u,v\in(\ker (x-\eps)^{2j})$ one obtains a symmetric non-degenerate form on $V_{2j}'(\eps )$. One defines $\Psi_+$, $\Psi_-$ such that $\Psi_\eps (j)$ gives the Witt type of $(\bar u,\bar v)_{j,\eps}$ and equals $1$ whenever $\bm( \eps,2j)=0$.

\begin{thm}\label{cla} 
Let $\cC$ be the set of triples $(\bm ,\Psi_+,\Psi_-)$ with $\bm:\cF_q\times \ZZ_{\geq 1}\to\ZZ_{\geq 0}$ and $\Psi_+,\Psi_-\colon \ZZ_{\geq 1}\to
\FF_q^\times /(\FF_q^\times{})^2$ that satisfy 
$
\bm( P^{-1},j)=\bm(P,j)$, $\bm( 1,2j-1)\in 2\ZZ$, $\bm(-1,2j-1)\in 2\ZZ$ for every $(P,j)\in \cF_q\times \ZZ_{\geq 1}$, and $ \sum_{(P,j)\in\cF_q\times \ZZ_{\geq 1}}j \cdot \bm(P,j) \cdot |P|=2n$. Then the conjugacy classes of $\GF$ are parametrized by $\cC$ through the description given above. \end{thm}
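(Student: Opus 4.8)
The plan is to establish the parametrization in two directions: first that every element of $\cF$ (the set of admissible triples) actually arises from a conjugacy class, and second that the triple $(\bm,\Psi_+,\Psi_-)$ is a complete invariant, i.e.\ two elements of $G=\Sp(V)$ with the same triple are conjugate. The constraints defining $\cC$ are exactly the ones forced by the symplectic structure, so the bulk of the work is the classification of symplectic similarity classes.

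First I would decompose $V$ into primary components. For $x\in\Sp(V)$, the generalized eigenspaces of $x$ over $\ov\FF_p$ group into $F$-stable (i.e.\ $\FF_q$-rational) blocks indexed by the orbits $P\in\cF_q$; since $x$ is symplectic and $\langle xu,xv\rangle_V=\langle u,v\rangle_V$, the block for $P$ is paired with the block for $P^{-1}$ under $\langle\_,\_\rangle_V$, which forces $\bm(P^{-1},j)=\bm(P,j)$ and shows that for $P\neq P^{-1}$ the pair of blocks carries a hyperbolic (uniquely determined) form, on which $x$ acts as an arbitrary invertible $\FF_q$-linear map of the prescribed Jordan type. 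Thus these ``non-real'' blocks contribute only the data $\bm(P,\cdot)$ with no Witt invariant, and their classification reduces to conjugacy in $\GL$ of a single block, i.e.\ to the Jordan form, which is governed precisely by $\bm$.

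The heart of the matter is the ``real'' part, the eigenvalues $\eps=\pm1$ (using $p\neq2$). Here I would follow Wall and Milnor: on the generalized $\eps$-eigenspace the nilpotent part $N=x-\eps$ interacts with the form via $\langle Nu,v\rangle_V=-\langle u,Nv\rangle_V+(\text{lower order})$, more precisely one uses the twisted form coming from $x-x^{-1}$ as in the statement. The classical module-with-form classification then says that such a space splits as an orthogonal sum of indecomposable pieces, one for each Jordan block; a block of \emph{odd} size $2j-1$ at $\eps$ must occur in hyperbolic pairs (giving $\bm(\eps,2j-1)\in2\ZZ$ and contributing no Witt invariant), while the blocks of \emph{even} size $2j$ at $\eps$ each carry a one-dimensional space with a symmetric nondegenerate form, and these assemble into the nondegenerate symmetric form on $V_{2j}'(\eps)$ described before the theorem; its Witt type is the invariant $\Psi_\eps(j)\in\FF_q^\times/(\FF_q^\times)^2$. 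Witt's theorem (again $p\neq2$) then gives that, with all Jordan multiplicities $\bm$ and all these Witt types fixed, the pair $(V_{\text{real}},x)$ is determined up to isometry, hence up to $\Sp$-conjugacy; and conversely any choice of $\bm$ satisfying the parity conditions and any Witt types can be realized, the global constraint $\sum j\,\bm(P,j)\,|P|=2n$ being just $\dim V=2n$. Combining the real and non-real parts, and checking that an isometry of $(V,x)$ respecting the decomposition is the same as an element of $\Sp(V)$ conjugating $x$, yields the bijection $\Cl(\GF)\leftrightarrow\cC$.

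The main obstacle, and the point requiring genuine care rather than bookkeeping, is the verification that the form $(\bar u,\bar v)_{j,\eps}$ defined via $(x-x^{-1})^{2j-1}$ on the subquotient $V_{2j}'(\eps)$ is well defined (independent of the choice of representatives), symmetric, and nondegenerate, and that it captures \emph{exactly} the residual discrepancy between two symplectic modules with the same Jordan type — this is the place where one must be sure that no finer invariant is hiding and no relation among the $\Psi_\eps(j)$ is imposed beyond $\Psi_\eps(j)=1$ when $\bm(\eps,2j)=0$. For this I would invoke the structure theory of $\FF_q[x]$-modules with a $\langle\_,\_\rangle_V$-compatible form as in \cite[\S3]{Mi69} (equivalently Wall's \cite{W63}); the only thing specific to type $\tC$ and to $p\neq2$ is that $2$ is invertible, so that the symmetric and alternating parts separate cleanly and the Witt group of $\FF_q$ is $\ZZ/2$, making $\Psi_\eps$ the complete secondary invariant. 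Once that structural input is in hand, the two inclusions $\cC\hookrightarrow\Cl(\GF)$ and $\Cl(\GF)\hookrightarrow\cC$ are formal.
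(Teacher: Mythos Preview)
Your outline is a correct sketch of the standard Wall--Milnor classification, and it is essentially the argument the paper has in mind: the paper does not supply its own proof of this theorem but states it as a recall of known results, citing \cite[2.6]{W63} and \cite[\S 3]{Mi69} for the parametrization and the Cikunov--Milnor construction of the forms $(\_,\_)_{j,\eps}$. So there is nothing to compare---your proposal simply fills in what the paper leaves to the references, and does so along the same lines (primary decomposition, hyperbolic pairing of the $P$ and $P^{-1}$ blocks, and the Witt-type invariant on the even Jordan blocks at $\pm 1$).
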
 

Clearly outer automorphisms of $G$ act on $\Cl(G)$ and hence on the parametrizing set $\cC$. The action of diagonal automorphisms has already been (implicitly) used in \cite[6.3.3]{L77} (see also the computation of the centralizers of unipotent elements in \cite{LS12}). We provide a proof for completeness.

\begin{prop}\label{act} 
	Let $x\in G$ and $(\bm ,\Psi_+,\Psi_-)$ the parameter of the conjugacy class containing $x$. 
\begin{enumerate}[(1)]
\item For a field automorphism $F_1$ the element $F_1(x)$ belongs to the conjugacy class $(F_1(\bm),\Psi_+,\Psi_-)$ where $F_1(\bm) (F_1(P),j):= m(P,j)$ for every $(P,j)\in \cF_q\times \ZZ_{\geq 1}$.
 
\item For $\delta$ a non-interior diagonal automorphism of $G$ (induced by an element of $\wG\setminus {\rm Z}( \wG )G$) the conjugacy class containing $\delta(x)$ corresponds to $(\bm,\Psi'_+,\Psi'_-)$, where for every $\eps\in\{\pm 1\}$, $j\geq 1$, one has $\Psi'_\eps(j)\neq \Psi_\eps(j)$ if and only if $\bm(\eps , 2j)$ is odd.
 \end{enumerate}
\end{prop}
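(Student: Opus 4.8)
The plan is to verify each part by chasing through the explicit definitions of the parameters $(\bm,\Psi_+,\Psi_-)$ recalled before Theorem~\ref{cla}, checking how each ingredient transforms.

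For part (1), a field automorphism $F_1$ acting on $G=\Sp_{2n}(q)$ is (up to $G$-conjugacy) induced by applying a power of the Frobenius to matrix entries; in particular it is the restriction of an automorphism of $\GL_{2n}(q)$ and it preserves the symplectic form $\langle\_,\_\rangle_V$ up to a scalar in $\FF_q^\times$ (in fact exactly, for a graph-free situation, but a scalar suffices here). Since $F_1$ maps eigenvalues of $x$ to their images under the $q_0$-power map and sends Jordan blocks to Jordan blocks of the same size, the multiplicity function transforms as $\bm(P,j)\mapsto \bm(F_1^{-1}(P),j)$, i.e.\ the new parameter is $F_1(\bm)$ as defined. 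For the Witt-type invariants, the key point is that $F_1$ carries the space $V'_{2j}(\eps)$ isomorphically onto the corresponding space for $F_1(x)$ (it commutes with forming kernels of $(x-\eps)^j$ since $F_1(\eps)=\eps$), and it transforms the form $(\_,\_)_{j,\eps}$ by the same scalar by which it scales $\langle\_,\_\rangle_V$. Since the Witt type of a symmetric bilinear form over $\FF_q$ is unchanged under scaling by a square, and changes in a controlled way under scaling by a non-square, one checks that in the relevant cases the class in $\FF_q^\times/(\FF_q^\times)^2$ is preserved; alternatively, one may first conjugate $F_1$ by an element of $\wG$ so that it preserves $\langle\_,\_\rangle_V$ on the nose, which makes the invariance of $\Psi_\pm$ immediate. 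Hence $\Psi_\pm$ are unchanged and (1) follows.

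For part (2), write $\delta$ as conjugation by some $g\in\wG=\CSp_{2n}(q)$ with multiplier $\mu=\mu(g)\in\FF_q^\times$, where $g$ is chosen outside $\Z(\wG)G$, so that $\mu$ is a non-square (this is exactly the condition distinguishing non-interior diagonal automorphisms, since $\wG/G\Z(\wG)\cong \FF_q^\times/(\FF_q^\times)^2$ when $p\neq 2$). Conjugation by $g$ does not change the $\GL_{2n}(q)$-conjugacy class of $x$, so $\bm$ is unchanged. On the spaces $V'_{2j}(\eps)$ the induced isomorphism transforms the form $(\bar u,\bar v)_{j,\eps}=\langle (x-x^{-1})^{2j-1}u,v\rangle_V$ by the scalar $\mu$: indeed $\langle gu,gv\rangle_V=\mu\langle u,v\rangle_V$ and $g$ intertwines $x$ with $gxg^{-1}$. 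Now the Witt type of a non-degenerate symmetric bilinear form of dimension $d$ over $\FF_q$ (with $q$ odd), recorded as an element of $\FF_q^\times/(\FF_q^\times)^2$ via the discriminant, gets multiplied by $\mu^d$ under scaling the form by $\mu$. Since $\mu$ is a non-square, $\mu^d$ is a non-square precisely when $d=\dim V'_{2j}(\eps)=\bm(\eps,2j)$ is odd. Therefore $\Psi'_\eps(j)\neq\Psi_\eps(j)$ exactly when $\bm(\eps,2j)$ is odd, which is the claim; when $\bm(\eps,2j)=0$ both sides are $1$ by convention and nothing changes.

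The main obstacle, and the only place requiring care, is the bookkeeping of scalars in part (2): one must confirm that the relevant discriminant-type invariant of a symmetric form scales by $\mu^{\dim}$ (not, say, $\mu^{\dim/2}$ or $\mu$), and that the class of a non-interior $\delta$ really corresponds to a non-square multiplier. Both are standard facts about symplectic similitude groups and quadratic forms over finite fields of odd characteristic, but they are the crux of why the parity of $\bm(\eps,2j)$ — rather than $\bm(\eps,2j)$ itself or some other quantity — controls the change in $\Psi_\eps(j)$. Part (1) is essentially formal once one either normalizes $F_1$ to be form-preserving or tracks the (square) scaling factor through the same discriminant computation.
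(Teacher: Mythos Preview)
Your proof is correct. Part~(1) matches the paper's treatment in spirit; your phrasing ``preserves the symplectic form up to a scalar'' is not quite accurate---a field automorphism is $\sigma$-semilinear, so $\langle F_1(u),F_1(v)\rangle_V=\sigma(\langle u,v\rangle_V)$ rather than a scalar multiple---but the conclusion that the Witt type is preserved follows anyway since $\sigma$ maps squares to squares, and the paper's own one-line justification (``it preserves the equivalence class of any bilinear form'') is equally terse.

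For part~(2) you take a genuinely different and more efficient route than the paper. The paper reduces via orthogonal decomposition to a single Jordan type $\mathrm{Jor}(d)^{(m)}$ and then, for even $d$, treats the cases $m$ even and $m$ odd separately by constructing explicit matrices $D(a)$, $c_\lambda$, $E_\lambda$ in $\Sp$ and $\CSp$ and computing the entries of $(E_\lambda-E_\lambda^{-1})^{d-1}$ by hand to see that the one-dimensional form is multiplied by $\lambda$. Your argument bypasses all of this: you observe that conjugation by $g\in\CSp$ with multiplier $\mu$ scales each Milnor form $(\_,\_)_{j,\eps}$ globally by $\mu$, and that scaling a nondegenerate symmetric form of dimension $\bm(\eps,2j)$ by $\mu$ multiplies its discriminant class by $\mu^{\bm(\eps,2j)}$. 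Since $g\notin\Z(\wG)G$ corresponds exactly to $\mu$ being a non-square, the parity criterion falls out immediately. This yields a uniform, coordinate-free argument at the cost of invoking the (standard) discriminant-scaling fact; the paper's approach is more self-contained but considerably longer.
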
 
\begin{proof}
% Let $ \GF=\Sp (V)$ for $V=(\FF_q)^{2n}$ and the alternating form $\langle \_\_ ,\_\_ \rangle_V$ associated with the matrix $\left( \begin{array}{cc} 0&-J_n \\ J_n & 0 \\ \end{array} \right)$ with $J_n$ the $n\times n$ matrix $(\delta_{i,n+1-j})_{1\leq i,j\leq n}$.
%
%We now recall the actual parametrization of Theorem~\ref{cla} following the approach of Cikunov-Milnor (see \cite[\S 3]{Mi69}). For $x\in \GF$, the triple $(m,\Psi_+,\Psi_-)$ associated with its conjugacy class is as follows. First $m(P,j)$ is the number of Jordan blocks of size $j$ associated to any eigenvalue of $x$ that is a root of $P$. This also means that the map $m$ just describes the conjugacy class of $x$ in the whole $\GL (V)\supseteq \Sp (V)$ through Jordan normal forms. 
%
%On the other hand, the maps $\Psi_+$, $\Psi_-$ give the type $\Psi_\eps (j)$ of a symmetric non-degenerate form on a $m(X-\eps ,2j)$-dimensional space over $\FF_q$. The space is $V_{2j}'(\eps ):=K_{2j}/(K_{2j-1}+(x- \eps )K_{2j+1})$ (where $K_{i}:=\ker (x-\eps )^i$)) and the symmetric form is defined for $u,v\in\ker (x-\eps)^{2j}$ by $(\bar u,\bar v)_j=
%\langle (x-x^{-1})^{2j-1}u,v\rangle_V$.

Via the action of $F_1$ on the conjugacy classes of $\GL_n(q)$ one easily understands the induced action on the first parameter. On the other hand it is clear that $F_1$ is induced by an automorphism of the ground field $\FF_q$ of the space $V$ and induces the same on the $V_i'$'s. It preserves the equivalence class of any bilinear form. This completes the proof of (1).

For part (2) we consider the action of the conformal group $\w G$ on $G$. Because of $\w G\leq \GL(V)$, the map $\bm$ is in the triple of $\cC$ corresponding to $\delta(x)$. We now turn to check the effect of $\delta$ on the parameters $\Psi$.

By decomposing $V$ into orthogonal direct sum (see \cite[Th 3.2]{Mi69}) it is sufficient to check the case where $\bm$ has only one non-zero value, i.e. $y:=x-1_V$ has Jordan type $\mathrm{Jor}(d)^{(m)}$ with $md=2n$ and $\mathrm{Jor}(d)$ is the $d\times d$ matrix whose $(i,i+1)$ terms equal $1$ and all others $0$. 

For odd $d$, the above parametrization tells us that the Jordan form gives only one conjugacy class, so the latter is preserved by diagonal automorphisms. 

Assume now $d$ even. Let us recall the associated non-degenerate symmetric form $(\_,\_)_x$ defined on $V'=V/yV$ by $(v+yV,v'+yV)_x:=\langle (x-x^{-1})^{d-1}v,v'\rangle_V$ for any $v,v'\in V$. We must show that the Witt types of $( \_,\_) _x$ and $( \_,\_) _{\delta(x)}$ coincide for even $m$ and always differ for odd $m$.

For every $a\in \GL_n(q)$ and $\la\in\FF_q^\times$, the matrices $D(a):=\left( \begin{array}{cc} a&0 \\ 0 & J_n{}^ta^{-1}J_n \\ \end{array} \right)\in \Sp_{2n}(q)$ and $c_\la := \left( \begin{array}{cc} \lambda I_n&0 \\ 0 & I_n \\ \end{array} \right)$ commute. Moreover the elements $c_\la$ generate $\CSp_{2n}(q)/\Sp_{2n}(q)$. 
When $a$ is a direct sum of unipotent Jordan blocks all of size $d$ a divisor of $n$, then $D(a)-1$ is indeed of type $\mathrm{Jor}(d)^{(2{n\over d})}=\mathrm{Jor}(d)^{(m)}$, and we have just seen that its centralizer covers the quotient $\CSp(V)/\Sp(V)$. So $\CSp(V)$ fixes that conjugacy class. Hence it also fixes the conjugacy class with opposite $\Psi$. This shows our claim in the case where $m$ is even.

In the case where $m=2m'+1$ is odd, let us split the space into a sum $V_1\perp V_2$ where $V_1$ has dimension $2m'd$ and $V_2$ has dimension $d$ (recall that $md$, hence $d$, is even, so this is possible). Both $V_1$ and $V_2$ can be assumed to be endowed with forms of the type above with respect to matrices $J_{m'd}$ and $J_{d/2}$. Let $D(a')\in \Sp(V_1)$ be as in our first case with $a'$ a direct sum of $m'$ unipotent Jordan blocks of size $d$. Let $b$ be a unipotent Jordan block of size $d/2$ and let $E:=\left( \begin{array}{cc} b&bJ_{d/2} \\ 0 & J_{d/2}{}^tb^{-1}J_{d/2} \\ \end{array} \right)$. The latter is clearly in $\Sp (V_2)$, unipotent. The rank of $E-I_d$ is easily checked to be $d$, since the elements on the upper second diagonal of length $d-1$ are $1^{d/2}(-1)^{d/2-1}$ (we use exponential notation for tuples). So $E$ is unipotent of Jordan type $I_d+\mathrm{Jor}(d)$. Now the element $x:=D(a')\perp E\in \Sp (V_1\perp V_2)$ is unipotent of type $I_{2n}+\mathrm{Jor}(d)^{(m)}$. An element of $\CSp (V_1\perp V_2)$ can be formed by taking $c_\la$'s as above with same $\la\in \FF_q^\times$ on $V_1$ and $V_2$. On $V_1$ it commutes with $D(a')$ while on $V_2$ it conjugates $E$ into 
$E_\la=\left( \begin{array}{cc} b&\la bJ_{d/2} \\ 0 & J_{d/2}{}^tb^{-1}J_{d/2} \\ \end{array} \right)$. The bilinear form $(\_,\_)_x$ associated with $x$ is clearly an orthogonal sum of the forms associated with $D(a')$ and $E$. On $V_1$ the form is unaffected by the conjugation, since $D(a')$ is not changed. Let us show that the form coming from $V_2$ (on a line) is multiplied by $\lambda$. Numbering the basis of $V_2$ used for the matrix above as $e_1\dots e_{d}$, the associated orthogonal space can be identified with the line generated by $e_d$ and the form is defined by $(e_d,e_d)_{E_{\la}}=\langle (E_\la-E_\la^{-1})^{d-1}e_d,e_d\rangle_{V_2}$. The elements on the upper diagonal of size $d-1$ in $E_\la-E_\la^{-1}$ are $2^{d/2-1}(2\la)^1(-2)^{d/2-1}$ (exponential notation again), and therefore%. Thanks to an easy lemma on nilpotent upper triangular matrices (such a matrix $N$ having $a_1,\dots ,a_{d-1}$ on its upper second diagonal satisfies $N^{d-1}(e_d)=a_1\dots a_{d-1}e_1$) one then gets 
$(E_\la-E_\la^{-1})^{d-1}e_d=(-1)^{d/2-1} 2^{d-1}\la e_1=\la (E-E^{-1})^{d-1}e_d$. So it is clear that $(e_d,e_d)_{E_\la}=\la (e_d,e_d)_{E}$, hence our claim.
\end{proof}

Note that by the above statement the analogue of Theorem \ref{MainTh} clearly holds for conjugacy classes. We use the above description to count the number $c_n (\delta ,F_1)$ of conjugacy classes of $G=\Sp_{2n}(q)$ that are $\delta F_1$-invariant where $F_1$ is a field and $\delta$ a non-inner diagonal automorphism of $G$.

 \begin{prop}\label{genfun} Let $q_1=|\FF_q^{F_1}|$. Then 
 	$$ \sum_{n\geq 0}c_n (\delta ,F_1)t^n=\prod_{i\geq 1}{(1+t^{2i})^2\over 1-q_1t^i} .$$
 \end{prop}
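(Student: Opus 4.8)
The plan is to compute the generating function by encoding the $\delta F_1$-invariance condition on the combinatorial parameters $(\bm,\Psi_+,\Psi_-)$ from Theorem~\ref{cla} and then factoring the count as an Euler product. First I would recall from Proposition~\ref{act} that a class with parameter $(\bm,\Psi_+,\Psi_-)$ is fixed by $\delta F_1$ exactly when: (i) $F_1(\bm)=\bm$, i.e. $\bm(P,j)=\bm(F_1(P),j)$ for all $(P,j)$, since the permutation action on the $\Psi_\eps$ part does nothing on the entries indexed by $P\neq\{\pm1\}$; and (ii) on the $\pm1$-part, $\delta$ flips $\Psi_\eps(j)$ precisely when $\bm(\eps,2j)$ is odd, and $F_1$ fixes every $\Psi_\eps(j)$ by Proposition~\ref{act}(1), so the combined condition forces $\bm(\eps,2j)$ to be \emph{even} for all $\eps\in\{\pm1\}$ and $j\geq1$ (otherwise $\Psi_\eps(j)\neq\Psi'_\eps(j)$ and the class moves), and then $\Psi_\eps$ can be arbitrary. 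Thus a $\delta F_1$-invariant class is the same datum as: an $F_1$-invariant, $P\mapsto P^{-1}$-symmetric function $\bm$ satisfying the degree relation $\sum j\,\bm(P,j)|P|=2n$ together with the parity constraints $\bm(1,2j-1)\in2\ZZ$, $\bm(-1,2j-1)\in2\ZZ$ \emph{and} $\bm(1,2j)\in2\ZZ$, $\bm(-1,2j)\in2\ZZ$ (so in fact $\bm(\pm1,j)\in2\ZZ$ for all $j$), plus a completely free choice of $\Psi_+(j),\Psi_-(j)\in\FF_q^\times/(\FF_q^\times)^2$ for each $j$ with $\bm(\eps,2j)>0$.

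Next I would set up the product decomposition of the generating function over the ``places'' contributing to $\bm$. The index set $\cF_q\times\ZZ_{\geq1}$ with the involution $P\mapsto P^{-1}$ and the $F_1$-action splits into orbits; for each orbit I compute its local generating factor, keeping track of the $t$-weight (which is $j|P|$ per unit of $\bm$, summed over the orbit) and, for the $\pm1$ entries, the extra factor $2$ from the two-element set $\FF_q^\times/(\FF_q^\times)^2$ whenever a relevant $\bm(\eps,2j)$ is nonzero. The entries with $P=\{1\}$ or $P=\{-1\}$ (both $F_1$-fixed, both equal to their own inverse) contribute, for each $j\geq1$, a factor $\sum_{k\geq0}t^{jk}\cdot(\text{weight})$ where $\bm(\pm1,j)=2k$ runs over even values; for $j$ even one also multiplies by $2$ when $k\geq1$, i.e. the factor is $1+2\sum_{k\geq1}t^{2jk}=\frac{1+t^{2j}}{1-t^{2j}}$ if I am careful, and $\frac{1}{1-t^{2j}}$ for $j$ odd — I will reorganize these. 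The remaining entries come in $F_1$-orbits of size $f\geq1$ and the involution either pairs two orbits or fixes one; a standard bookkeeping (as in the classical count of conjugacy classes of $\mathrm{GL}_n$ and $\Sp_{2n}$) turns their combined contribution into factors of the form $\frac{1}{1-q_1 t^i}$, where $q_1=|\FF_q^{F_1}|$ accounts exactly for the number of $F_1$-orbits of a given ``size type'' on $\oF_p^\times$ — this is where the Pólya/cycle-index identity $\prod_i (1-q_1 t^i)^{-1}=\sum_n (\#\{F_1\text{-symmetric }\bm\text{ of that weight}\}) t^n$ enters.

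Concretely I would verify the identity by comparing it with the known (un-twisted or $F_1=\id$) count and then isolating the novel factor $(1+t^{2i})^2$ in the numerator, which must come entirely from the two $\pm1$-blocks together with their $\Psi_\pm$ freedom: each of $P=\{1\}$ and $P=\{-1\}$ contributes $\prod_{j\geq1}$ of a local factor which, after using that only even $\bm$-values occur and that even-$j$ values of size $2k\geq2$ carry a weight $2$, telescopes to $\prod_{i\geq1}(1+t^{2i})\big/\prod_{i\geq1}(1-t^{2i})$; the two denominators $\prod(1-t^{2i})$ merge into the main product $\prod_i(1-q_1t^i)^{-1}$ (they correspond to the two fixed points of the involution among the $P$'s, whose ``$q_1$-count'' is $1$), leaving precisely $(1+t^{2i})^2$ upstairs. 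I expect the main obstacle to be exactly this merging step: organizing the Euler factors so that the contribution of the two special places $\{1\},\{-1\}$ is cleanly separated from the generic places without double-counting, and checking that the generic places reproduce $\prod_{i\geq1}(1-q_1t^i)^{-1}$ with the correct exponent. Once that bookkeeping is pinned down, multiplying out gives $\sum_{n\geq0}c_n(\delta,F_1)\,t^n=\prod_{i\geq1}\dfrac{(1+t^{2i})^2}{1-q_1t^i}$ as claimed.
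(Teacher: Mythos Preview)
Your strategy is exactly the paper's: separate the eigenvalues $\pm 1$ from the rest, use Wall's generating function for the ``generic'' part (classes without eigenvalue $\pm 1$, which under the condition $F_1(\bm)=\bm$ are counted over $\FF_{q_1}$), compute the local factor at each of $\{1\},\{-1\}$ from the parity constraint and the free $\Psi_\eps$, and multiply. The invariance analysis in your first paragraph is correct. But two of your intermediate formulas are wrong, and they do \emph{not} compensate.

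First, the $\pm 1$ factor. Writing $\bm(\eps,j)=2k$, the contribution to $n$ is $jk$ (as you wrote initially), so for even $j$ the factor is $1+2\sum_{k\geq 1}t^{jk}=(1+t^{j})/(1-t^{j})$, not $(1+t^{2j})/(1-t^{2j})$, and for odd $j$ it is $1/(1-t^{j})$, not $1/(1-t^{2j})$. Multiplying over $j$ gives, for each $\eps$, the factor $f'_+(t)=\prod_{i\geq 1}(1+t^{2i})/(1-t^{i})$; the denominator is $\prod_i(1-t^i)$, not $\prod_i(1-t^{2i})$. Second, the generic contribution is \emph{not} $\prod_i(1-q_1t^i)^{-1}$: that cycle-index identity counts $F_1$-invariant Jordan data in $\GL$, ignoring the involution $P\mapsto P^{-1}$. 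With the involution and $\pm 1$ excluded, Wall's formula gives $f_0(t,q_1)=\prod_{i\geq 1}(1-t^i)^2/(1-q_1t^i)$. The numerator $(1-t^i)^2$ then cancels precisely against the two denominators of $f'_+(t)^2$, yielding $\prod_{i\geq 1}(1+t^{2i})^2/(1-q_1t^i)$. With your stated factors the product would instead be $\prod_i(1+t^{2i})^2\big/\big((1-t^{2i})^2(1-q_1t^i)\big)$, off by $\prod_i(1-t^{2i})^{-2}$. Once you correct these two exponents, your ``merging step'' goes through verbatim.
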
 

\begin{proof} This is shown via considerations analogous to \cite[p.~38]{W63}.

By Proposition~\ref{act}, we have to count all triples $(m,\Psi_+,\Psi_-)\in \cC$ with $F_1(m)=m$ and such that $m(\pm 1,2j)$ is even for all $j\geq 1$. 
Let $a_n$ be the number of conjugacy classes of $\Sp_{2n}(q)$ without $\pm 1$ as eigenvalue, and 
$f_0(t,q)=\sum a_n t^n$ be its generating function (denoted as $F_0(t^{1/2})$ in \cite[p 37]{W63}). Let $${\mathrm {Part}}_{n}:=\{ (m_1,m_2,\ldots )\mid m_i\in\ZZ_{\geq 0}\text{ and } \sum_{i}im_i=n \},$$
$s(\underline{m}):=|\{ i\mid m_{2i}\neq 0\}|$,
$b_n:=\sum_{\underline m \in {\mathrm {Part}_n}} 2^{s(\underline m)}$ and 
$f_+'(t)=\sum_{n\geq 0} b_n t^n $. Each element $x\in G$ induces an $x$-stable orthogonal decomposition $V=V_0\perp V_\pm$ with $x$ having no eigenvalue $\pm 1$ on $V_0$ and only $\pm 1$ on $V_\pm$. The restrictions are described by the maps $m$ and $(\Psi_+,\Psi_-)$ respectively. The condition that $F_1(m)=m$ means that the retriction of $x$ to $V_0$ has to be in $\Sp (V_0)^{F_1}$. Inputing also the condition on $(\Psi_+,\Psi_-)$ coming from $\delta F_1$-stability, we get $$ \sum_{n\geq 0}c_n (\delta ,F_1)t^n=f_0(t,q_1)f_+'(t)^2.$$

One has $f_0(t,q_1)=\prod_{i\geq 1}{(1-t^i)^2\over 1-q_1t^i}$ according to \cite[p.~37]{W63}. On the other hand by the definition of $b_n$ we easily get 
\begin{align*}
f'_+(t)&=(1+t+t^{2}+\dots )(1+2t^2+2t^{4}+\dots )(1+t^3+t^{6}+\dots )(1+2t^4+2t^{8}+\dots )\dots \\
&= \prod_{i\geq 1}{1+t^{2i}\over 1-t^{i}}. \end{align*} 
So we get indeed $f_0(t,q_1)f_+'(t)^{2}=\prod_{
i\geq 1}{(1+t^{2i})^2\over 1-q_1t^i}$ and our claim.
\end{proof}

%\begin{rem}\label{exec} {\rm For $\Sp_4(q)$, we get $c(\delta ,F_0) = q_0^2+3q_0+3$ (while the total number of conjugacy classes is $q^2+5q+10$). For $\Sp_6(q)$, we get $c(\delta ,F_0) = q_0^3+3q_0^2+6q_0+6$ (while the total number of conjugacy classes is $q^3+5q^2+15q+24$).} \end{rem} 

\subsection{Counting certain characters} 
In the next step we compute the number $d(\w G,F_1)$ from Definition~\ref{c} using the reformulation of Lemma~\ref{n(G)}.
As above $F_1$ acts on $G$ by raising the matrix entries to the $q_1$-th power, with $q$ a power of $q_1$. Recall $d(\w G,F_1)$ from Definition~\ref{c}.

 \begin{prop}\label{gend} Denote $d'_n:=(q-1)^{-1}d(\w G,F_1)$ for $\w\bG =\CSp_{2n}(\ov\FF_p)$, with $d'_0=1$. Then $$ \sum_{n\geq 0}d'_n t^{n}= \left(\sum_{j\geq 0}t^{j^2+j}\right)\prod_{i\geq 1}{1\over (1-q_1t^{i})(1-t^{2i})}.$$ \end{prop}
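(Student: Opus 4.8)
The strategy is to translate the count in Lemma~\ref{n(G)} into a combinatorial generating function, exactly in the spirit of the proof of Proposition~\ref{genfun}. By Lemma~\ref{n(G)}, $d(\w G,F_1)$ counts $\w G^*$-classes of pairs $(s,\la)$ with $s\in\w G^*$ semisimple, $\la$ a unipotent character of $\cent{\w G^*}{s^F}^F$ with trivial $A(s)^F$-stabiliser, and the class of $(s\Z(\w\bG^*),\la)$ being $F_1$-stable. Here $\w\bG^*=\CSp_{2n}(\ovF_p)^*=\Sp_{2n}(\ovF_p)$ (or rather $\mathrm{SO}_{2n+1}$, but the difference is only the central torus: dually $\w\bG$ conformal symplectic means $\w\bG^*$ is $\mathrm{SO}_{2n+1}$ times nothing, while $\bG^*_{\SC}=\mathrm{SO}_{2n+1}$; the factor $q-1$ in $d'_n$ absorbs the $\Z(\w\bG^*)$-translation, i.e.\ the $(q-1)$ choices of the ``similitude part'' of $s$). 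So after dividing by $q-1$ the count $d'_n$ is over genuine semisimple classes $s$ in $\mathrm{SO}_{2n+1}(q)$ (equivalently $\Sp_{2n}(q)$ up to the usual duality) together with a unipotent character of the centraliser satisfying the $A(s)$-condition.

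\textbf{Key steps.} First I would recall the standard description of semisimple classes $s$ in $\Sp_{2n}(q)$ (or $\mathrm{SO}_{2n+1}(q)$): the eigenvalues of $s$ partition into $\{\pm1\}$-eigenspaces and pairs $\{P,P^{-1}\}$ of non-self-inverse Frobenius orbits on $\ovF_p^\times$, giving a decomposition $\cent{\w\bG^*}{s}\cong \Sp_{2a}\times \mathrm{SO}^{\pm}_{2b+\epsilon}\times\prod \GL$-type factors, and $A(s)$ is an elementary abelian $2$-group whose rank counts the distinct ``even-type'' orthogonal blocks — precisely the data encoded by $s(\underline m)$ in Proposition~\ref{genfun}. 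Second, for each such $s$ one must count unipotent characters $\la$ of $\cent{\w G^*}{s^F}^F$ with trivial $A(s)^F$-stabiliser; $A(s)^F$ acts on the unipotent characters of the orthogonal factors through the familiar action permuting the two unipotent characters in each ``degenerate'' symbol pair, so the generating function for the orthogonal-part contribution is a $q$-free series whose coefficients count symbols of odd defect (type $\rB$/$\rC$) with all degenerate entries resolved — this is where the factor $\sum_{j\ge0}t^{j^2+j}$ comes from (it is the generating function $\sum_j t^{|W(\rB_j)|\text{-ish}}$; more precisely $j^2+j=2\binom{j+1}{2}$ counts ``distinguished'' unipotent characters of $\mathrm{SO}_{2j^2+2j+1}$, i.e.\ the ones supported on a symbol with no repeated entries, contributing the $A(s)$-fixed-point-free count). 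Third, the $\GL$-type factors and the semisimple-class bookkeeping over $\cF_q^{F_1}$ contribute, exactly as in Wall's computation reproduced in Proposition~\ref{genfun}, the factor $\prod_{i\ge1}(1-q_1t^i)^{-1}$ (from the self-paired part $f_0$) together with the $\prod_i (1-t^{2i})^{-1}$ coming from the free choice of the size of the $\pm1$-parts and their internal symbol data that is \emph{not} killed by the $A(s)$-condition. Fourth, multiply the three independent generating functions and check they give the claimed product.

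\textbf{Main obstacle.} The delicate point is the precise combinatorics of the $A(s)^F$-action on unipotent characters of the orthogonal centraliser factors and the resulting enumeration of ``$A(s)$-regular'' unipotent characters: one must identify exactly which symbols (equivalently which bipartitions/unipotent classes) have trivial stabiliser, and confirm that summing $2^{-(\text{defect count})}$-type weights over all symbols of the right rank produces the clean series $\bigl(\sum_{j\ge0}t^{j^2+j}\bigr)\prod_{i\ge1}(1-t^{2i})^{-1}$ rather than something messier. Getting the bookkeeping of the central torus $\Z(\w\bG^*)$ right — i.e.\ verifying that the passage from $d(\w G,F_1)$ to $d'_n$ really is division by $q-1$ and that $F_1$-stability of the class of $(s\Z(\w\bG^*),\la)$ rather than of $(s,\la)$ itself does not change the count (which holds because $\Z(\w\bG^*)^F$ has order $q-1$ and acts freely in the relevant range) — is the other place where care is needed. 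Once these two points are settled, the identity follows by comparing Euler products, and the restriction on $F_1$ only enters through $q_1=|\FF_q^{F_1}|$ exactly as in Proposition~\ref{genfun}.
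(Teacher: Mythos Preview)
Your overall plan---translate Lemma~\ref{n(G)} into a count of pairs $(s,\la)$ in the dual group and assemble a generating function from the eigenspace decomposition of $s$---is the same as the paper's. But several of the specifics you rely on are wrong, and as stated the plan would not reach the claimed identity.

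\medskip

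\textbf{(1) The dual group and the shape of centralisers.} Here $\w\bG^*/\bZ\cong\SO_{2n+1}$, so the connected centraliser of a semisimple $s$ (seen modulo $\bZ$) is of the form $\SO(\ov V_1)\times\SO(\ov V_{-1})\times\prod\GL$, with $\dim\ov V_1$ odd; there is no $\Sp$-factor. More importantly, $|A(s)|\le 2$: it is nontrivial precisely when $\ov V_{-1}\neq 0$, and then $A(s)$ acts by the diagonal automorphism on each $\SO$-factor. Your description of $A(s)$ as an elementary abelian $2$-group whose rank counts ``even-type blocks'' is the picture for full orthogonal groups, not for this situation.

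\textbf{(2) The $A(s)$-regular condition.} Since $|A(s)|\le 2$ and diagonal automorphisms fix every unipotent character of $\SO_{2m+1}$, the only way $A(s)^F_\la=1$ can fail when $A(s)\ne 1$ is on the $\SO(\ov V_{-1})$-factor. The paper's dichotomy is: either (a) $\ov V_{-1}=0$, or (b) $\dim\ov V_{-1}=4m>0$ and the unipotent character on that factor is one of the two ``twin'' characters attached to a degenerate symbol (these exist only in type $\tD_{2m}$, and there are $2\,{\rm p}(m)$ of them). Your sketch does not isolate this case split.

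\textbf{(3) The central torus and the factor $q-1$.} You assert that passing from $d(\w G,F_1)$ to $d'_n$ is a uniform division by $q-1$. It is not: an $\SO$-class $s\bZ$ with $A(s)=1$ lifts to $q-1$ classes in $\w G^*$, but one with $|A(s)|=2$ lifts to only $(q-1)/2$. Hence $d'_n=a_n+\tfrac12 b_n$, where $a_n,b_n$ count pairs in cases (a),(b) respectively. This halving is essential for the final formula.

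\textbf{(4) The origin of $\sum_j t^{j^2+j}$.} This series is not a count of ``$A(s)$-regular'' or ``distinguished'' unipotent characters. It enters via Lusztig's identity
\[
\sum_{m\ge 0}\phi_m\,t^{m}=\cP(t)^2\sum_{j\ge 0}t^{j^2+j},
\]
where $\phi_m$ is the \emph{total} number of unipotent characters of $\SO_{2m+1}(q)$ (symbols of rank $m$, odd defect). In the paper's count, the $\SO(\ov V_1)$-factor contributes $\sum_m\phi_m t^{2m+1}$, the $\GL$-part (with the $F_1$-stability forcing descent to $\FF_{q_1}$) contributes $f_0(t^2,q_1)$, and the twin characters on $\SO(\ov V_{-1})$ in case (b) contribute a factor $\sum_{m>0}2\,{\rm p}(m)t^{4m}$; combining and dividing by two on the (b)-part gives $f_0(t^2,q_1)\,(\sum_m\phi_m t^{2m+1})\,\cP(t^4)$, and substituting Lusztig's identity yields the statement.

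\medskip

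In short: you have the right architecture but the wrong bricks. Fixing the structure of $A(s)$, the (a)/(b) split, the $(q-1)/2$ fibre in case (b), and the actual source of $\sum_j t^{j^2+j}$ will turn your outline into the paper's proof.
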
 

\begin{proof}
With our choice of $\w\bG$, we get for $\w\bG^*$ the connected component of 1 in the Clifford group (see \cite[8.1]{L77}), denoted by $ G^0_{2n+1} $. Let ${\w G^*} =\w\bG^*{}^F$ and $\bZ := \Z ({\w\bG^*})$ a one-dimensional torus. If $s\in\w G^*$ is semi-simple recall $A(s)$ the group of components of the stabilizer of $s\bZ$ in $\w\bG^*$. We now enumerate the ($\w G^*$-conjugacy classes of) pairs $(s,\lambda)$ with the properties given in Lemma~\ref{n(G)} above. 
 
Recall $\w\bG^*/\bZ=(\bG^*)_{\rm ad}=\SO (\ov V)=\SO_{2n+1}(\ov\FF_p)$ is the adjoint group of type $\tB_n$ where $\ov V$ is a $(2n+1)$-dimensional $\ov\FF_q$-vector space. The centralizers in $\SO(\ov V)$ of semi-simple
 elements $s\in \SO_{2n+1}(\FF_q)$ 
 are described for instance in \cite[p.~126]{FS89}. Note that $-1$ has to have even multiplicity as eigenvalue of $s$ since $s$ has determinant 1. Then the eigenspace of the eigenvalue 1 satisfies $\ov V_1\not= 0$ and $2\nmid \dim (\ov V_1)$. The centralizer of $s\in\SO_{2n+1}(\ov\FF_p)$ can be non-connected only if in addition $\ov V_{-1}\not= 0$. Then $A(s)^F=A(s)$ acts by the diagonal outer automorphism on both summands $\SO (\ov V_1)^F$ and $\SO (\ov V_{-1})^F$ of the connected centralizer. All unipotent characters of finite special orthogonal groups are fixed by diagonal automorphisms except the so-called twin characters in type $\tD_{2m}$ (see \cite[5.1]{L88} and the remark following it). This type may occur only as $\SO (\ov V_{-1})$ since $\ov V_1$ has odd dimension. So the condition that $A(s)^F$ has trivial stabilizer on our unipotent character of the connected centralizer is that one of the following two conditions are satisfied 

$(a)$ $A(s)=1$ or equivalently $\ov V_{-1}=0$

$(b)$ $\ov V_{-1}$ has dimension $4m\not= 0$ and the unipotent character of $\SO (\ov V_{-1})^F$ is a twin character (then $|A(s)| =2$).

Unipotent characters are parametrized by classes of symbols $(S,T)$ (see \cite[8.2.(i)]{L77}). Twin characters correspond to the so-called degenerate symbols with $S=T$. The number of classes of degenerate symbols of rank $2m$ is ${\rm p} (m)$, the number of partitions of $m$. So the number of possible unipotent characters is then $2 \cdot {\rm p} (m)$.

Thanks to an easy discussion on eigenvalues and eigenspaces, the condition on $F_1$-stability means that $(s\bZ ,\la )$ is a pair for $\SO_{2n+1} (\FF_{{q_1}})$ where $q_1=|\FF_q^{F_1}|$.

%We get in particular that $(q-1)^{-1}d({F_0} ,\w G^*)$ is the number of classes $(s',\la )$ in $\SO_{2n+1}(\F_{q_0})$ such that $V_1=0$ or $V_{-1}$ has dimension $4m\geq 0$ and the unipotent character of $\SO (V_{-1})^F$ is a twin character.

Each class in $ G^*$ from (a) above gives rise to $q-1$ classes in $\w G^*$, while the ones from part (b) give rise to $(q-1)/2$ (see for instance \cite[\S 6.4 page 154]{L77}). So the number we compute is $d'_n= a_n+b_n/2$ where $a_n$ is the number
of $\w G^*$-classes of pairs $(s\bZ ,\la )$ satisfying the condition (a), and $b_n$ is the number
of $\w G^*$-classes of pairs $(s\bZ ,\la )$ satisfying (b). Recall that unipotent characters of $\SO_{2n+1}(q)$ are parametrized by the set of classes of symbols of rank $n$ and odd defect (see \cite[8.2.(i)]{L77}). Denote by $\phi_n$ its cardinality.
Then $$\sum_{n\geq 0} a_n t^{2n+1} = f_0(t^2,q_1)\sum_{m\geq 0} \phi_m t^{2m+1}$$ where as before $f_0(t^2,q):=\prod_{m\geq 1}{(1-t^{2m})^2\over 1-qt^{2m}}$ is the generating function denoted by $F^+_0(t)$ in \cite[p. 37]{W63} and representing the number of conjugacy classes in $\Sp_{2n} (q)$ (or $\SO_{2n} ^+(q)$) without eigenvalue 1 or $-1$. Indeed, by Jordan decomposition in even-dimensional orthogonal groups O$_{2m}(q_1)$, choosing simultaneously a semi-simple element $s'$ without eigenvalue $\pm 1$ and a unipotent character for each factor $\GL_{m(\omega)}(q_1^{|\omega |})$ corresponding with the eigenvalue $\omega$ of $s'$ in the centralizer of $s'$ is combinatorially equivalent to enumerating in the same group the conjugacy classes of elements without $\pm 1$ as eigenvalue, whence the function $f_0(t^2,q_1)$ (see \cite[2.6.12]{W63}) in the formula above.

By the discussion made before about case (b), we also get $$\sum_{n\geq 0} b_n t^{2n+1} = f_0(t^2,q_1)\cdot (\sum_{m\geq 0} \phi_m t^{2m+1})\cdot ( \sum_{m >0}2\cdot {\rm p} (m)t^{4m}).$$
So $$\sum_{n\geq 0}d'_n t^{2n+1}= f_0(t^2,q_1)\cdot(\sum_{m\geq 0} \phi_m t^{2m+1})\cdot\cP (t^4)$$
where $\cP$ is the generating function of partitions, $\cP (t)=\sum_{m\geq 0} {\rm p} (m)t^m=\prod_{m>0} (1-t^m)^{-1}$.

According to \cite[3.4.1]{L77}, one has $\sum_{m\geq 0} \phi_m t^{m} = \cP(t)^2\sum_{j\geq 0}t^{j^2+j}$.
So $$\sum_{n\geq 0}d'_n t^{2n+1}=t f_0(t^2,q_1)\cP (t^2)^2\cP (t^4)\sum_{j\geq 0}t^{2(j^2+j)} .$$ 
We deduce our claim (at $t^2$) by substituting the formula given above for $f_0(t^2,q_1)$.
\end{proof}

%\begin{rema}\label{exed} {\rm For $\SO_5(q)$, we get $d'(\delta ,F_0) = Ã$ . For $\SO_7(q)$, we get $d'(\delta ,F_0) = ..$ .} \end{rema} 
\subsection{Proof of Theorem~\ref{MainTh}}
In view of Proposition~\ref{prop_stab_counting} and Lemma~\ref{n(G)}, we now have to compare the generating functions given by Propositions~\ref{genfun} and \ref{gend}. 
After factoring out the term $f_0(t,q_1) = \prod_{i\geq 1}{(1-t^{i})^2\over 1-q_1t^{i}}$, the sought equality amounts to $$\sum_{i\geq 0}t^{i^2+i} =\prod_{i\geq 1} (1-t^{2i})\prod_{i\geq 1} (1+t^{2i})^2 ,$$  an obvious consequence of the Jacobi triple product identity
$$
\sum_{n\in\ZZ} x^{n^2} y^{n} = \prod_{m\geq 1}
\left( 1 - x^{2m}\right)
\left( 1 + x^{2m-1} y\right)
\left( 1 + {x^{2m-1}}{y^{-1}}\right).$$

%%%%%%%%%%%%%%%%%%%%%%%%%%%%%%%%%%%%%%%%%%%%%%%%%%%%%%%%%%%%%%%%%%%%%%%%%

\section{The local condition - general tools} \label{loc}
\label{sec_loc_gen}
Our goal in this and the following section is to prove Theorem \ref{thm_sec6}, namely that assumption~2.12(vi) of \cite{S12} is satisfied for a choice of $G$, $\w G$, $N$ and $D$ adapted to quasi-simple groups of type $\tC_l$. This assumption requires that the stabilizers of some characters of local subgroups have a particular structure. Like in \cite{MaH0} we choose the local subgroups (called $N$ in \cite[2.12]{S12}) as normalizers of Sylow $d$-tori. Sylow $d$-tori have been introduced in \cite[3.C]{BM92} under the name of Sylow $\Phi_d$-tori. 

In the case where the underlying root system is of type $\tA_l$ the analogous statement has been proven in Theorem 5.1 of \cite{CS15}. In \cite[\S 3]{MS15} this property of characters of normalizers of Sylow $d$-tori has been proven for $d\in\{1,2\}$. (In a more general sense Prop.~3.6 of \cite{S12} can be seen as an analogue for the case where the characteristic of the underlying field coincides with the prime $\ell$, but there the local groups to consider are different.)

In this section we generalize the considerations of \S 5 of \cite{CS15} and clarify which computations have to be done for each type and which arguments from \cite[\S 5]{CS15} can be transferred, see Theorems \ref{thm_loc_gen} and \ref{thm_loc_gen_reg}. We then distinguish two cases, depending on the centralizer of a Sylow $d$-torus being a torus or not. 
We start by introducing the relevant notation and recalling some definitions for later use. In the next section we consider in turn the regular and non-regular case and prove \ref{thm_sec6} by applying Theorems \ref{thm_loc_gen} and \ref{thm_loc_gen_reg}.
 
In contrast to the preceding section where the symplectic group over a finite group was considered as a matrix group we see here this group as a finite subgroup of an algebraic group whose elements are subject to the Chevalley relations. This allows to formulate some results in greater generality. We follow here the notation introduced in \cite[\S 2]{MS15}.

\begin{notation}\label{not_loc_gen}
Let $p$ be a prime. Let $\bG$ be a simply connected simple algebraic group over an algebraic closure $\ov\FF_p$ of $\FF_p$. 
Let $\bB$ be a Borel subgroup of $\bG$ with maximal torus $\bT$. Let $\Phi$, $\Phi^+$ and $\Delta$ denote the set of roots, positive roots and simple roots of $\bG$ that are determined by $\bT$ and $\bB$. Let $\bN:=\norm{\bG}{\bT}$, $W:=\norm{\bG}{\bT}/\bT$ the Weyl group of $\bG$, and let $\rho: \bN \rightarrow W$ be the defining epimorphism. 

For the calculations with elements of $\bG$, we use the Chevalley generators subject to the Steinberg relations as in \cite[Thm.~1.12.1]{GLS3}, i.e., the elements $\x_\al(t_1)$, $\n_\al(t_2)$ and $\h_\al(t_2)$ ($\al\in \Phi$, $t_1,t_2\in \ov\FF_q$ with $t_2\neq 0$) defined therein.

The following endomorphisms of $\bG$ will be of particular interest. Let $F_0:\bG\rightarrow \bG$ the {\it field automorphism of $\bG$} given by 
$$F_0(\x_\al(t))=\x_\al (t^p) \text{ for every } t \in \overline \FF_q \text{ and } \al\in \Phi.$$
Any length-preserving automorphism of $\Phi$ stabilizing $\Delta$ induces a {\it graph automorphism $\gamma$ of $\bG$} satisfying 
$$\gamma(\x_\al(t))=\x_{\gamma(\al)}(t)
\text{ for every } t \in \oF_q \text { and } \al\in\pm\Delta.
$$

Clearly $F_0$ and $\gamma$ commute.
Let $E_0$ be the subgroup $\Gamma_{\overline K}\leq \Aut(\bG)$ from \cite[Def.~1.15.5]{GLS3} corresponding to the length-preserving automorphisms of $\Delta$.  

Let $r$ be the rank of $\Z(\bG)$ (as abelian group) and $\bZ:=(\oF_q^\times)^r$ a torus. Via the identification of $\Z(\bG)$ with a subgroup of $\bZ$ we set
$\w \bG:=\bG\times_{\Z(\bG)} \bZ$. 
We obtain a regular embedding $\iota_{reg}:\bG\rightarrow \w\bG$ in the sense of 
\cite[15.1]{CE04}.
Let $\w \bT:=\bT\bZ$ and $\w \bN:=\bN\bZ$. Like in \cite[2.B]{MS15} we extend the field and graph endomorphisms introduced above to endomorphisms of $\w\bG$. 

We consider the Steinberg endomorphism $F:=F_0^m\gamma$ for some positive integer $m$ with $\gamma \in E_0$.
 Hence $F$ defines an $\FF_q$-structure on $\w\bG$ with $q:=p^m$. We let $G:=\bG^F$ and $\w G:=\w\bG^F$. 

Let $D$ be the subgroup of $\Aut(G)$ generated by $F_0$ and graph automorphisms commuting with $F$. Then $\w G\rtimes D$ is well defined and induces all automorphisms of $G$, see \cite[Thm.~2.5.1]{GLS3}. For later it is relevant that by definition $D$ acts on the set of $F$-stable subgroups of $\bG$. 

Let $V:=\Spann <\n_\al(-1)| \al\in \Phi>$, $H:=\bT \cap V$ and $\phi$ be the automorphism of $\ZZ \Phi$ of finite order induced by $F$. Let $E_1=\cent{E_0}{\gamma}$, $\exp(E_1)$ its exponent, let $e:=o(\phi)\exp(E_1) |V|$, and let $E:=\Cy_{em}\times E_1$ act on $\bG^{F_0^{em}}$ such that a generating element $\wh F_0$ of $\Cy_{em}$ acts by $F_0$ and the second factor in the natural way (by graph automorphisms). We denote by $\wh F_1$ the element $\gamma\wh F_0^m$ of $E$ inducing the automorphism $F$ on $\bG^{F_0^{em}}$.
\end{notation}

\begin{defn}[{\cite[Def.~5.7]{CS15}}]
Let $ Y\lhd X$ be finite groups. We say that {\it maximal extendibility holds with respect to $Y\lhd X$} if every $\chi\in \Irr(Y)$ extends (as irreducible character) to $X_\chi$ (stabilizer). Then, an {\it extension map with respect to $Y\lhd X$} is a map 
$$\Lambda: \Irr(Y)\rightarrow \bigcup_{Y\leq I \leq X}\Irr(I)$$
such that for every $\chi\in\Irr(X)$ the character $\Lambda(\chi)\in \Irr(X_\chi)$ is an extension of $\chi$. 
\end{defn}

When $ Y\lhd X$ and $\chi\in\Irr (Y)$, $\chi '\in\Irr (X)$,  let us recall the notations $\Irr (X\mid \chi)$, $\Irr (Y\mid \chi ')$,  denoting the set of irreducible components of $\Ind^X_Y(\chi)$ and $\Res^X_{Y}(\chi ')$, respectively.

We now formally give a criterion to check assumption 2.12(iv) of \cite{S12}. This summarizes and generalizes considerations from \cite[\S 5]{CS15} that appeared there only in the context of type $\tA$.

\begin{thm}\label{thm_loc_gen}
\label{thm_loc_genii}
Let $d$ be a positive integer. Assume there exists an element $v\in V$ such that for the Sylow $d$-torus $\bS$ of $(\bT,vF)$ the groups $N:=\norm \bG \bS ^{vF}$, $\w N:=\norm {\w\bG} \bS ^{vF}$,  $\wh N:=(\Cent_{\bG^{F_0^{em}}E}(v\wh F))_{\bS}$, $C:=\cent \bG \bS ^{vF}$ and $\w C:=\cent {\w\bG} \bS^{vF}$ satisfy the following conditions: 
\begin{enumerate}[(i)]
\item \label{thm_loc_gen_i}\label{5_3_i}
$\bS$ is a Sylow $d$-torus of $(\bG,vF)$.
\item\label{thm_loc_genii1}
There exists some set $\calT\subseteq \Irr(C)$, such that 
\begin{enumerate}[({ii}.1)]
\item \label{5_3_ii_1}
$\wN_\xi= \w C_\xi N_\xi$ for every $\xi\in \calT$,
\item \label{5_3_ii_2}
$(\wN\wh N )_{\Ind_C^{N}(\xi)} =\wN_{\Ind_C^{N}(\xi)} \wh N_{\Ind_C^{N}(\xi)}$ for every $\xi\in \calT$, and 
\item $\calT$ contains some $\w C$-transversal of $\Irr(C)$.
\end{enumerate}
 
\item \label{thm_loc_genii2}
There exists an extension map $\Lambda$ with respect to $C\lhd N$ such that
\begin{enumerate}[({iii}.1)]
\item \label{5_3_iii_1}
$\Lambda$ is $\wh N$-equivariant. 
\item \label{5_3_iii_2}
If $D$ is non-cyclic, every character $\xi\in\Irr(C)$ has an extension $\wh \xi\in\Irr(\wh N_\xi)$ with $\Res^{\wh N_\xi}_{N_\xi} (\wh \xi)=\Lambda(\xi)$ and $v\wh F\in\ker(\wh\xi)$. 
\end{enumerate}
 
\item \label{thm_loc_genii3}
Let $W_d:=N/C$ and $\wh W_d:=\wh N/C$. For $\xi\in \Irr(C)$ and $\w\xi\in\Irr(\w C_\xi\mid \xi)$ let $W_{\w\xi}:=N_{\w\xi}/C$, $W_{\xi}:=N_{\xi}/C$, $K:=\NNN_{W_d}(W_\xi,W_{\w\xi})$ and $\wh K:=\NNN_{\wh W_d}(W_\xi,W_{\w\xi})$. Then there exists for every $\eta_0\in\Irr(W_{\w\xi})$ some $\eta\in\Irr(W_{\xi}\mid \eta_0)$ such that 
\begin{enumerate}[({iv}.1)]
\item $\eta$ is $\wh K_{\eta_0}$-invariant.
\item \label{5_3_iv_2}
If $D$ is non-cyclic, $\eta$ extends to some $\wh \eta\in \Irr(\wh K_{\eta})$ with $v\wh F\in\ker(\wh\eta)$.
\end{enumerate} 
\end{enumerate}
Then for every Sylow $d$-torus $\bS_0$ of $(\bG,F)$, $N_0:=\NNN_{\bG}(\bS_0)^F$, $\w N_0:=\NNN_\wG(\bS_0)^F$ and $\psi\in\Irr(\w N_0)$ there exists some $\psi_0\in\Irr(N_0\mid \psi)$ such that 
\begin{enumerate}
\item $O_0=(\wGF\cap O_0 )\rtimes (D\cap O_0)$ for $O_0:= \GF (\wGF\rtimes D)_{\bS_0,\psi_0}$, and 
\item $\psi_0$ extends to $(G\rtimes D)_{\bS_0,\psi_0}$.
\end{enumerate}
This ensures assumption 2.12(vi) of \cite{S12} with that choice of groups. 
\end{thm}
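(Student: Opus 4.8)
\emph{Overview and Step 1 (reduction to the fixed torus).} The plan is to port and generalise the chain of arguments of \cite[\S 5]{CS15}, which was written there for type $\tA$, working in three nested layers: the abelian layer $C\lhd\w C$ (responsible for diagonal automorphisms), the layer $C\lhd N$ governed by the extension map $\Lambda$, and the relative Weyl group layer $W_{\xi}\le W_{\w\xi}$ sitting inside $\wh W_d$. First I would reduce the statement about an arbitrary Sylow $d$-torus $\bS_0$ of $(\bG,F)$ to the distinguished $\bS$. By \cite[3.C]{BM92} the Sylow $d$-tori of $(\bG,F)$ form a single $G$-orbit, so up to $G$-conjugacy one may take $\bS_0={}^{g}\bS$, where $g\in\bG$ satisfies $g^{-1}F(g)=\dot v$ for a representative $\dot v\in\bN$ of $v$; here hypothesis (i) guarantees that ${}^{g}\bS$ really is a Sylow $d$-torus of $(\bG,F)$, and conjugation by $g$ identifies $(N,\w N,C,\w C,vF)$ with $(N_0,\w N_0,C,\w C,F)$. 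As in \cite[\S 2]{MS15} and \cite[\S 5]{CS15}, the field and graph automorphisms in $D$ become, after this conjugation, visible through the finite group $\wh N$: the element $v\wh F\in\bG^{F_0^{em}}E$ plays the role of $F$, and $\wh N\langle v\wh F\rangle$ maps onto the part of $\GF\rtimes D$ stabilising $\bS_0$ that is relevant to conclusions (1) and (2). Thus it suffices to produce, for every $\psi\in\Irr(\w N)$, a character $\psi_0\in\Irr(N\mid\psi)$ with $(\w N\wh N)_{\psi_0}=\w N_{\psi_0}\,\wh N_{\psi_0}$ and, when $D$ is non-cyclic, an extension of $\psi_0$ to $(N\wh N)_{\psi_0}$ with $v\wh F$ in its kernel; transporting back along $g$ then delivers (1) and (2), hence assumption 2.12(vi) of \cite{S12}.

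\emph{Step 2 (Clifford analysis over $C\lhd N$).} Next, fix $\psi\in\Irr(\w N)$ and pick a constituent of $\Res^{\w N}_{C}\psi$; by (ii.3) it may be taken in $\calT$, say $\xi$, and choose $\w\xi\in\Irr(\w C_\xi\mid\xi)$ lying below $\psi$. By (ii.1) one has $\w N_\xi=\w C_\xi N_\xi$, so the $\w N$-orbit of a character in $\Irr(N\mid\xi)$ is determined by the pair $(\xi,\w\xi)$; together with standard Clifford theory for $\w C\lhd\w N$ this shows that $\psi$ singles out an $\eta_0\in\Irr(W_{\w\xi})$. On the other side, the extension map $\Lambda$ of (iii) together with Gallagher's theorem turns inflation along $N_\xi\to N_\xi/C=W_\xi$ followed by multiplication with $\Lambda(\xi)$ into a bijection $\Irr(W_\xi)\to\Irr(N_\xi\mid\xi)$, and inducing to $N$ into a bijection onto $\Irr(N\mid\xi)$. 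Hence $\Irr(N\mid\psi)$ gets matched with $\Irr(W_\xi\mid\eta_0)$, and the problem is pushed down to the relative Weyl group.

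\emph{Step 3 (choice of constituent and the product decomposition).} Now apply (iv) to $\eta_0$: there is $\eta\in\Irr(W_\xi\mid\eta_0)$ that is $\wh K_{\eta_0}$-invariant, with $\wh K=\NNN_{\wh W_d}(W_\xi,W_{\w\xi})$, and---if $D$ is non-cyclic---$\eta$ extends to some $\wh\eta\in\Irr(\wh K_\eta)$ with $v\wh F\in\ker\wh\eta$. Put $\psi_0:=\Ind_{N_\xi}^{N}\!\bigl(\Lambda(\xi)\cdot\eta\bigr)\in\Irr(N\mid\psi)$. The base case of the desired factorisation is hypothesis (ii.2), namely $(\w N\wh N)_{\Ind_C^N(\xi)}=\w N_{\Ind_C^N(\xi)}\,\wh N_{\Ind_C^N(\xi)}$; the $\wh N$-equivariance of $\Lambda$ from (iii.1) carries this through the $\Lambda$-twist, and the $\wh K_{\eta_0}$-invariance of $\eta$ together with (ii.1) carries it through the Weyl-group layer, yielding $(\w N\wh N)_{\psi_0}=\w N_{\psi_0}\,\wh N_{\psi_0}$. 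Transporting along $g$, the $\w N$-part becomes $\wGF\cap O_0$ and the $\wh N$-part induces $D\cap O_0$, with trivial intersection since $\wGF\cap D=1$; this is precisely (1).

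\emph{Step 4 (the extension, and the main difficulty).} Finally, for (2): if $D$ is cyclic then $\psi_0$ automatically extends to $(G\rtimes D)_{\bS_0,\psi_0}$, since this group modulo $N_0$ is cyclic. If $D$ is non-cyclic I would combine (iii.2) and (iv.2): $\xi$ has an extension $\wh\xi\in\Irr(\wh N_\xi)$ with $\Res^{\wh N_\xi}_{N_\xi}\wh\xi=\Lambda(\xi)$ and $v\wh F\in\ker\wh\xi$, and $\eta$ has the extension $\wh\eta$ with $v\wh F\in\ker\wh\eta$; forming the product of $\wh\xi$ with the inflation of $\wh\eta$ and inducing produces an extension of $\psi_0$ to its stabiliser in $N\wh N$ on which $v\wh F$ acts trivially, which after transport along $g$ is $(G\rtimes D)_{\bS_0,\psi_0}$. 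I expect the genuine obstacle to be this last assembly: one must check that the three product decompositions---the abelian one concealed in $\w C_\xi$ via (ii.1), the $\Lambda$-level one from (ii.2) and (iii.1), and the Weyl-group one from (iv)---glue into a single decomposition of $O_0$, that the various ``$v\wh F\in\ker$'' conditions are simultaneously met, and that all of this survives the Lang-conjugation by $g$ converting $vF$-data into $F$-data. This is exactly where moving from the type-$\tA$ setting of \cite[\S 5]{CS15} to the present general statement has to be done with care; the (type-specific) verification of hypotheses (i)--(iv) themselves is deferred to the next section.
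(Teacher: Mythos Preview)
Your proposal is correct and follows essentially the same approach as the paper: reduce via the isomorphism $\w\bG^{F}\rtimes D\cong\cent{\w\bG^{F_0^{em}}E}{v\wh F}/\langle v\wh F\rangle$ (your Lang conjugation by $g$), parametrise $\Irr(N)$ by pairs $(\xi,\eta)$ through $\Lambda$, and use (ii.1), (ii.2), (iii.1) together with the $\wh K_{\eta_0}$-invariance from (iv.1) to force the factorisation $(\w N\wh N)_{\psi_0}=\w N_{\psi_0}\wh N_{\psi_0}$, then (iii.2) and (iv.2) for the extension. The only organisational difference is that you front-load the identification $\Irr(N\mid\psi)\leftrightarrow\Irr(W_\xi\mid\eta_0)$ in Step~2, whereas the paper instead starts from an arbitrary $\chi_1=\Pi(\xi,\eta')$ under $\psi$, observes via the explicit formula $\Pi(\xi,\eta')^{t}=\Pi(\xi,\eta'\nu_t)$ that its $\w C_\xi$-conjugates sweep out $\Irr(W_\xi\mid\eta_0)$, and then replaces $\eta'$ by the good $\eta$; both routes are equivalent.
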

\begin{proof}
First we relate the groups $N$ and $\w N$ with the groups $N_0$ and $\w N_0$: the considerations in subsection 5.1 of \cite{CS15}, especially in the proof of Proposition 5.3 establish an isomorphism
$$\epsilon: \w\bG^{F} \rtimes D \rightarrow \cent{\w\bG^{F_0^{em}}E}{v\wh F}/ \spann <v\wh F> .$$ 
Applying this isomorphism proves the equivalence of the following two statements: 
\begin{enumerate}[(i)]
\item Let $\bS_0$ be a Sylow $d$-torus of $(\bG,F)$, $N_0:=\NNN_\bG(\bS_0)^F$, 
$\w N_0:=\NNN_{\w\bG}(\bS_0)^F$ and $\psi\in\Irr(\w N_0)$. 
Then there exists some $\psi_0\in\Irr(N_0\mid \psi)$ such that 
\begin{itemize}
\item $O_0=(\wGF\cap O_0)\rtimes (D\cap O_0)$ for $O_0:= \GF(\wGF\rtimes D)_{\bS_0,\psi_0}$, and
\item $\psi_0$ extends to $(G\rtimes D)_{\bS_0,\psi_0}$.
\end{itemize}
\item For every $\chi\in\Irr(\w N)$ there exists some $\chi_0\in\Irr(N\mid \chi)$ such that 
\begin{itemize}
\item $(\wN \wh N)_{\chi_0}=\wN_{\chi_0} \wh N_{\chi_0}$, and
\item $\chi_0$ has an extension $\w\chi_0$ to $(\wh N)_{\chi_0}$ with $v\wh F\in\ker(\w\chi)$.
\end{itemize}
\end{enumerate}
If $D$ is cyclic, the characters $\chi_0$ and $\psi_0$ have automatically the second property by \cite[Cor.~(11.22)]{Isa}.

We use the extension map $\Lambda$ for $C\lhd N$ from assumption \ref{thm_loc_genii2}. 
Note that for every $t\in(\w\bT^{vF})_\xi$ and $\w\xi\in\Irr(\spann<C,t>\mid \xi)$ the character $\nu\in\Irr(N_\xi)$ given by $\Lambda(\xi)^t=\Lambda(\xi)\nu$ is the lift of a faithful character of $N_\xi/N_{\w\xi}$ by straightforward computations. By the proof of Proposition 5.10 of \cite{CS15} based on Clifford theory we obtain a parametrization of $\Irr(N)$: Let $\calP$ be the set of pairs $(\xi, \eta)$ with $\xi\in\Irr(C)$ and $\eta\in\Irr(W_\xi)$ with $W_\xi=N_\xi/C$. Then 
\begin{align}
\Pi:\calT \rightarrow \Irr(N)\text{ defined by } (\xi,\eta) \longmapsto \Ind^N_{N_\xi}(\Lambda(\xi)\eta)
\end{align}
is surjective and satisfies 
\begin{enumerate}[(i)]
\item $\Pi(\xi,\eta)=\Pi(\xi',\eta')$ if and only if $\xi'=\xi^n$ and $\eta'=\eta^n$ for some $n\in N$,
\item $\Pi(\xi,\eta)^{\si}=\Pi(\xi^{\si},\eta^{\si})$ for every $\si\in \hat N$,
\item $\Pi(\xi,\eta)^{t}=\Pi(\xi^t,\eta \nu_t )$ for every $t\in\w\bT^{vF}$, where $\nu_t\in \Irr(N_{\xi^t})$ is defined by $\Lambda(\xi)^t=\Lambda(\xi^t)\nu_t$. If $\xi^t=\xi$, then $\nu_t$ satisfies $\ker(\nu_t)=N_{\w\xi}$ for any extension $\w\xi$ of $\xi$ to $\spann <C,t>$. 
\end{enumerate} 
For a given $\xi\in\Irr(C)$ and $\w\xi\in\Irr(\w C_\xi\mid \xi)$ the map $t\mapsto\nu_t$ defines a bijection between $\w C_\xi/{C\cent{\w C}G}$ and $\Irr(N_\xi/N_{\w\xi})$.

With the above we prove the required property of characters of $N$. Let $\chi\in\Irr(\w N)$. Then we can choose $\chi_1\in\Irr(N\mid \w\chi)$ such that $\chi_1=\Pi(\xi,\eta)$ for $(\xi,\eta)\in\calP$ and some character $\xi\in\calT$. (Note that $\calT$ contains some $\w C$-transversal of $\Irr(C)$.) We first show that some $C_\xi$-conjugate $\chi_0$ of $\chi_1$ satisfies 
 $$(\wN \wh N)_{ \chi_0}=\wN_{\chi_0} \wh N_{\chi_0}.$$
By the properties of $\calT$ and $\Pi$ every $x\in(\wN \wh N)_{\chi_0}$ can be written as $\w n \wh n$ with $\w n\in \w N_\xi$ and $\wh n\in \wh N_\xi$ after some $N$-multiplication. 
We can write $\w n=tn $ with $t\in \w C_\xi$ and $n\in N_\xi$ by the assumptions on $\calT$. 
Accordingly we can compute as in the proof of Proposition~5.13 of \cite{CS15} the following 
$$\Pi(\xi,\eta)^x=\Pi(\xi,\eta)^{\w n \wh n}=\Pi (\xi , (\eta\nu)^{n \wh n}),$$
where $\nu\in\Irr(W_\xi)$ is the character defined by $\Lambda(\xi)^{t}=\Lambda(\xi)\nu$. Since $\chi_1$ is $\w n \wh n$-invariant this implies $\eta= (\eta\nu)^{ \wh n}$. (Recall $n\in N_\xi$ and hence $\eta^n=\eta$.) Accordingly $\wh n$ stabilizes $\Irr(W_{\w \xi}\mid \eta)$ and hence $\wh n C\in W_\xi \wh K_{\eta_0}$ for some $\eta_0\in \Irr(W_{\w \xi}\mid \eta)$. 
If $\eta$ is the character from \ref{thm_loc_genii3} it is $\wh K_{\eta_0}$-invariant. Then $\eta^{\wh n}=\eta$ and hence $\eta\nu=\eta$. This implies $t\in \w C_{\Lambda(\xi)\eta}$ and 
$$\Pi(\xi,\eta)^{\wh n}=\Pi(\xi,\eta)=\Pi(\xi,\eta)^{\w n}.$$ 
By conjugating $\chi_1$ with elements of $C_\xi$ one obtains some $\chi_0=\Pi(\xi, \eta)$ with $\xi\in \calT$ and $\eta\in\Irr(W_\xi\mid \eta_0)$ with properties as in \ref{thm_loc_genii3}. Then 
$$ (\w N \wh N)_{\chi_0}=\w N_{\chi_0}\wh N_{\chi_0}.$$

Using the character $\wh \xi\in \Irr(\wh N_\xi)$ from assumption (iii.2) and $\Res^{\wh N_\xi}_{\wh N_{\xi,\eta}}(\wh \eta) \in \Irr(\wh N_{\xi,\eta})$ from (iv.2), the construction of the proof of \cite[Prop.~5.13(ii)]{CS15} gives an extension of $\chi_0$ to $\wh N_{\chi_0}/\spann <v\wh F>$. According to the reformulation of the statement in terms of characters of $N$ and $\w N$ from the beginning of the proof, this implies the statement.
\end{proof}

In the study of normalizers of Sylow $d$-tori in \cite{S10a,S10b,CS15} it is important first to understand the situation where the centralizer of the Sylow $d$-torus is a torus, which is equivalent to the fact that $d$ is a regular number of $(\bG,F)$, respectively $W\phi$, see also \cite[Def.~2.4]{S10a}. In this so-called regular case and type $\tA_l$ the considerations in Section 5 of \cite{CS15} have established a strategy to verify the assumptions of Theorem \ref{thm_loc_gen}. The next statement clarifies the necessary computations that have to be performed in that case. 

\begin{thm}\label{thm_loc_gen_reg}
Let $d$ be a regular number for $(\bG,F)$. Assume that there exists an element $v\in V$ with the following properties:
\begin{enumerate}[(i)]
\item \label{thm_loc_geni1}
$\rho(v)\phi$ is a $\zeta$-regular element of $W\phi$ in the sense of \cite{Springer} for some primitive $d$th root of unity $\zeta\in\CC$, see also \cite[Def.~2.5]{S10a}.
\item \label{thm_loc_geni2}
$\rho(V_d)=W_d$ with $V_d:=V^{vF}$ and $W_d:=\Cent_W(\rho(v))$.
\item \label{thm_loc_geni3} \label{5_4_iii}
There exists an extension map $\Lambda_0$ with respect to $H_d\lhd V_d$ such that:
\begin{enumerate}[({iii}.1)]
\item 
$\Lambda_0$ is $\wh V_d$-equivariant with $\wh V_d:=VE\cap \wh N$.
\item \label{5_4_iii_2}
If $D$ is non-cyclic, $\Lambda_0(\la)$ extends for every $\la\in\Irr(H_d)$ to some $\wh \la\in\Irr((\wh V_d)_\la)$ with $v\wh F\in \ker(\wh\la)$.
\end{enumerate}
\item \label{thm_loc_geni4}
Let $C:=\bT^{vF}$, $\w C:=\w\bT^{vF}$, $N:=\bN^{vF}$, $W_d:=N/C$ and $\wh W_d:=\wh N/C$. 
For $\xi\in \Irr(C)$ and $\w\xi\in\Irr(\w C\mid \xi)$ let $W_{\w\xi}:=N_{\w\xi}/C$, $W_{\xi}:=N_{\xi}/C$, $K:=\NNN_{W_d}(W_\xi,W_{\w\xi})$ and $\wh K:=\NNN_{\wh W_d}(W_\xi,W_{\w\xi})$. Then there exists for every $\eta_0\in\Irr(W_{\w\xi})$ some $\eta\in\Irr(W_{\xi}\mid \eta_0)$ such that 
\begin{enumerate}[({iv}.1)]
\item $\eta$ is $\wh K_{\eta_0}$-invariant.
\item \label{5_4_iv_2}
If $D$ is non-cyclic, $\eta$ extends to some $\wh \eta\in \Irr(\wh W_{\eta})$ with $v\wh F\in\ker(\wh\eta)$.
\end{enumerate} \end{enumerate} 
Then for every Sylow $d$-torus $\bS_0$ of $(\bG,F)$, $N_0:=\NNN_{\bG}(\bS_0)^F$, $\w N_0:=\NNN_\wG(\bS_0)^F$ and $\psi\in\Irr(\w N_0)$ there exists some $\psi_0\in\Irr(N_0\mid \psi)$ such that 
\begin{enumerate}
\item 
$O_0=(\wGF\cap O_0 )\rtimes (D\cap O_0)$ 
 for $O_0:= \GF (\wGF\rtimes D)_{\bS_0,\psi_0}$,
\item $\psi_0$ extends to $(G\rtimes D)_{\bS_0,\psi_0}$
\end{enumerate}
This ensures that assumption 2.12(vi) of \cite{S12} is satisfied with that choice of groups. 
\end{thm}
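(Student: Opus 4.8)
The plan is to deduce the assertion from Theorem~\ref{thm_loc_gen}, applied with $\calT:=\Irr(C)$; the task is therefore to check that hypotheses~(i)--(iv) of Theorem~\ref{thm_loc_gen} are consequences of hypotheses~(i)--(iv) above. Since $d$ is a regular number for $(\bG,F)$ and $\rho(v)\phi$ is $\zeta$-regular, the $\zeta$-eigenspace of $\rho(v)\phi$ lies in no reflection hyperplane, so by Springer's theory of regular elements (see \cite{Springer}, and \cite[\S 2]{S10a}) the centraliser $\cent\bG\bS$ of the Sylow $d$-torus $\bS$ of $(\bT,vF)$ is $\bT$ itself. Hence $C=\bT^{vF}$, $\w C=\w\bT^{vF}$, $N=\bN^{vF}$ and $\w N=\w\bN^{vF}$ --- exactly the groups occurring in~(iii)--(iv) above --- and $\bS$ is then also a Sylow $d$-torus of $(\bG,vF)$, which is hypothesis~(i) of Theorem~\ref{thm_loc_gen}. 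Moreover hypothesis~(iv) of Theorem~\ref{thm_loc_gen} coincides verbatim with hypothesis~(iv) here, so nothing remains to be done there.

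I would next treat hypothesis~(ii) of Theorem~\ref{thm_loc_gen} with the choice $\calT=\Irr(C)$, where condition~(ii.3) is then vacuous. Since $\w\bT=\bT\bZ$ is abelian, $\w C$ centralises $C$ and so fixes every $\xi\in\Irr(C)$; combined with $\w N=\w C\,N$ (which holds because $\bT$ is connected and $\w\bN=\w\bT\bN$, cf.\ \cite[\S 2]{S10a}) this shows that every element of $\w N_\xi$ has the form $tn$ with $t\in\w C=\w C_\xi$ and $n\in N_\xi$, giving~(ii.1); it also shows that $\w N$ fixes each $\Ind_C^N(\xi)$ as a whole, so $\w N_{\Ind_C^N(\xi)}=\w N$. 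Writing any $x\in(\w N\wh N)_{\Ind_C^N(\xi)}$ as $x=\w n\,\wh n$ with $\w n\in\w N$ and $\wh n\in\wh N$, one then gets $\Ind_C^N(\xi)^{\wh n}=\Ind_C^N(\xi)$, i.e.\ $\wh n\in\wh N_{\Ind_C^N(\xi)}$, so~(ii.2) holds automatically.

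The remaining --- and main --- point is hypothesis~(iii) of Theorem~\ref{thm_loc_gen}: building from the given extension map $\Lambda_0$ for $H_d\lhd V_d$ an $\wh N$-equivariant extension map $\Lambda$ for $C\lhd N$, with $v\wh F$ in the kernel of the relevant extensions when $D$ is non-cyclic. For this I would first record the product decompositions $N=C\,V_d$ and $\wh N=C\,\wh V_d$: since $\rho$ restricted to $N$ has kernel $C$ and, by hypothesis~(ii), image $W_d=\rho(V_d)$, while $C\cap V_d=(\bT\cap V)^{vF}=H_d=\ker(\rho|_{V_d})$, one gets $N=C\,V_d$ and $N/C\cong V_d/H_d$, and an analogous argument with $v\wh F$ in place of $vF$ (using the compatible action of the automorphisms in $E$ on $V$ and $\bT$) yields $\wh N=C\,\wh V_d$ with $\wh V_d\cap C=H_d$ and $\wh N/C\cong\wh V_d/H_d$. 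One then transports $\Lambda_0$ along these identifications precisely as in \cite[\S 5]{CS15} (the construction behind Propositions~5.10 and~5.13 there), the $\wh N$-equivariance of $\Lambda$ flowing from the $\wh V_d$-equivariance of $\Lambda_0$ and the kernel condition from condition~(iii.2) above. Once hypotheses~(i)--(iv) of Theorem~\ref{thm_loc_gen} are in place, that theorem supplies the required $\psi_0$ together with its two properties, whence assumption~2.12(vi) of \cite{S12}. I expect the transport step to be the principal obstacle: $C$ is abelian while the stabilisers $N_\xi$ need not be, so the obstruction to extending the linear character $\xi$ to $N_\xi$ is genuine and is exactly what $\Lambda_0$ is designed to remove, and the delicate part is to verify that the resulting $\Lambda$ is simultaneously compatible with the conjugation action of $N$, with the action of $\w\bT^{vF}$, and with the field and graph automorphisms making up $\wh V_d$.
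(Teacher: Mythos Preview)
Your proposal is correct and follows essentially the same route as the paper: reduce to Theorem~\ref{thm_loc_gen} with $\calT=\Irr(C)$, use regularity of $\rho(v)\phi$ to get $C=\bT^{vF}$ and hypothesis~(i), note that~(iv) is literally the same, and exploit the abelianness of $\w C$ for~(ii). The only notable difference is in the transport step for~(iii): the paper invokes \cite[4.3]{S09} rather than \cite[\S 5]{CS15} and writes down the explicit defining condition
\[
\Res^{N_\xi}_{V_{d,\xi}}\bigl(\Lambda(\xi)\bigr)=\Res^{V_{d,\xi_0}}_{V_{d,\xi}}\bigl(\Lambda_0(\xi_0)\bigr),\qquad \xi_0:=\Res^C_{H_d}(\xi),
\]
from which the $\wh V_d$-equivariance (hence $\wh N$-equivariance) and the kernel condition are read off directly via $\wh N_\xi=C(\wh V_d)_\xi$ and $(\wh V_d)_\xi\leq(\wh V_d)_{\xi_0}$; this makes the ``delicate part'' you flag routine rather than an obstacle.
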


\begin{proof}
We prove that in the above situation the assumptions of Theorem \ref{thm_loc_gen} are satisfied. 

Since $\rho(v)\phi$ is a $\zeta$-regular element of $W\phi$ the Sylow $d$-torus $\bS$ of $(\bT,vF)$ is a Sylow $d$-torus of $(\bG,vF)$ according to \cite[Rem.~3.2 and Lem.~3.3]{S09}. Accordingly \ref{thm_loc_gen}\ref{thm_loc_gen_i} holds. 

Note that $\w C:=\cent{\w \bG}{\bS}^{vF}$ coincides with $\w\bT^{vF}$ and is hence abelian. For $\calT:=\Irr(C)$ every $\xi\in\calT$ satisfies $\w C\subseteq \w N_\xi$ and hence $\wN_\xi=(N\w C)_\xi=\w C N_\xi$. This ensures \ref{thm_loc_gen}\ref{5_3_ii_1}. Analogous computations verify \ref{thm_loc_gen}\ref{5_3_ii_2}. By the choice $\calT:=\Irr(C)$ it contains some $\w C$-transversal. 

The equation $\rho(V_d)=W_d$ and the extension map $\Lambda_0$ allow us to construct an extension map $\Lambda$ as required in \ref{thm_loc_gen}\ref{thm_loc_genii2}: 
Note that $N=V_d C$ and $\wh N=\wh V_d C$. Following the proof of \cite[4.3]{S09} there exists a unique extension map $\Lambda$ with respect to $C\lhd N$ such that 
$$\Res^{N_\xi}_{V_{d,\xi}}( \Lambda(\xi))=\Res^{V_{d,\xi_0}}_{V_{d,\xi}}(\Lambda_0(\xi_0)) \text{ for every }\xi\in\Irr(C)$$
with $\xi_0:=\Res^C_{H_d}(\xi)$.
By the construction $\Lambda$ is $\wh V_d$ and hence $\wh N=\wh V_d C$-equivariant. (This is assumption \ref{thm_loc_gen}\ref{5_3_iii_1}.)
For any $\xi\in\Irr(C)$ and $\la:=\Res^C_{H_d} (\xi)$ we have 
$\wh N_\xi=C (\wh V_d)_\xi$ and $(\wh V_d)_\xi\leq (\wh V_d)_\la$; this implies that there exists an extension $\wh \xi$ of $\xi$ to $\wh N_\xi$ with 
$\Res^{\wh N_\xi}_ {(\wh V_d)_\xi} (\wh \xi)= \Res^{(\wh V_d)_{\la}} _{(\wh V_d)_\xi} (\wh \la) $. This character $\wh \xi$ has then the properties required in \ref{thm_loc_gen}\ref{5_3_iii_2}.
Since the last assumptions coincide one sees that all assumptions of Theorem \ref{thm_loc_gen} hold and hence the statement follows from it.
\end{proof}

The following remark helps in the verification of the assumptions. 

\begin{rem}\label{rem_simpl}
Note that if $F=F_0^m$ and $|E_1|=1$ we have $\wh N=N\rtimes E$, where $E$ acts trivially on $V$ and hence $V_d$. This implies that assumption \ref{thm_loc_gen_reg}\ref{thm_loc_geni3} holds whenever maximal extendibility holds for $H_d\lhd V_d$ since then a $V_d$-equivariant extension map for $H_d \lhd V_d$ exists. 
\end{rem}

\section{The local condition in Type $\tC$}\label{loc_reg}
From now on we concentrate on the case where the underlying root system is of type $\tC_l$ ($l\geq 1$). The aim of this section is to prove the following statement. 

\begin{thm}\label{thm_sec6}
Assume that in the notation of \ref{not_loc_gen} the root system of $\bG$ is of type $\tC_l$ and $ p\not= 2$. Let $d$ be a positive integer, $\bS_0$ be a Sylow $d$-torus of $(\bG,F)$, $N_0:=\NNN_{\bG}(\bS_0)^F$, $\w N_0:=\NNN_{\w\bG}(\bS_0)^F$ and $\psi\in\Irr(\w N_0)$. There exists some $\psi_0\in\Irr(N_0\mid \psi)$ such that 
\begin{enumerate}[(i)]
\item $O_0=(\wGF\cap O_0)\rtimes (D\cap O_0)$ for $O_0:= \GF(\wGF\rtimes D)_{\bS_0,\psi_0}$ and 
\item \label{thm_sec6ii}
$\psi_0$ extends to $(\GF\rtimes D)_{\bS_0,\psi_0}$.
\end{enumerate}
\end{thm}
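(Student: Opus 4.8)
The plan is to deduce Theorem~\ref{thm_sec6} from the two criteria of Section~\ref{loc}, namely Theorems~\ref{thm_loc_gen} and \ref{thm_loc_gen_reg}, by separating the regular case (where $\cent{\bG}{\bS}$ is a torus, i.e.\ $d$ is a regular number for $(\bG,F)$) from the non-regular case. Since $\bG$ is of type $\tC_l$, the action $\phi$ on $\ZZ\Phi$ is trivial ($E_0=1$ in this type, so $\gamma=1$ and $F=F_0^m$), hence $W\phi=W$ is the Weyl group of type $\tC_l$, the regular numbers are the $d$ dividing $2l$ or $l$ in the appropriate way, and $D=\spann<F_0>$ is cyclic. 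A first, very useful simplification is therefore that the ``$D$ non-cyclic'' clauses in \ref{thm_loc_gen}(\ref{5_3_iii_2}), (\ref{5_3_iv_2}), in \ref{thm_loc_gen_reg}(\ref{5_4_iii_2}), (\ref{5_4_iv_2}) are vacuous, and the last sentence of the proof of Theorem~\ref{thm_loc_gen} (invoking \cite[Cor.~(11.22)]{Isa}) takes care of part (ii) automatically. So in both cases it suffices to produce $v\in V$ and verify the remaining hypotheses, which concern only the structure of $N=\bN^{vF}$, $\w N$, $\wh N$ and the relevant Weyl-type quotients.

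In the \emph{regular case} I would invoke Theorem~\ref{thm_loc_gen_reg}, so the work reduces to four points: (i) choosing $v\in V$ so that $\rho(v)\phi=\rho(v)$ is a $\zeta$-regular element of $W$ of order $d$ — here one takes an explicit element of the extended Weyl group of type $\tC_l$, described via the associated braid group, realizing a $d$-regular class (Springer's theory guarantees existence); (ii) checking $\rho(V_d)=W_d$, i.e.\ that the inverse image in $V$ of the centralizer $\Cent_W(\rho(v))$ is covered by $V^{vF}$ — a computation inside the extended Weyl group; (iii) producing a $\wh V_d$-equivariant extension map $\Lambda_0$ for $H_d\lhd V_d$, which by Remark~\ref{rem_simpl} reduces (since $F=F_0^m$ and $E_1=1$ in type $\tC$) to maximal extendibility for $H_d\lhd V_d$; and (iv) the Clifford-theoretic condition on $\eta$'s relative to $W_\xi$, $W_{\w\xi}$ inside $W_d$ and $\wh W_d$. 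The group $W_d=\Cent_W(\rho(v))$ for a $d$-regular element of a type-$\tC_l$ Weyl group is itself a well-understood complex reflection group (a wreath-type group $G(e,1,a)$ or $G(e,2,a)$ depending on parity), and $\wh W_d/W_d$ is cyclic; condition (iv) then becomes an extendibility statement for characters of these relatively small, explicitly described groups, to be handled by standard Clifford theory together with the fact that the relevant quotient is cyclic.

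In the \emph{non-regular case} I would instead apply Theorem~\ref{thm_loc_gen} directly. Now $C=\cent{\bG}{\bS}^{vF}$ is no longer a torus: for type $\tC_l$ it is (up to the $vF$-fixed points) a smaller symplectic group $\Sp_{2m}(q^{d'})$-type factor times a torus, so $\Irr(C)$ is itself described by Jordan decomposition, and one must choose the subset $\calT\subseteq\Irr(C)$ and verify (ii.1)--(ii.3) — that $\wN_\xi=\w C_\xi N_\xi$ and the analogous statement after induction to $N$, for $\xi$ in a $\w C$-transversal. Condition (iii) again asks for a $\wh N$-equivariant extension map for $C\lhd N$ (the ``$D$ non-cyclic'' half being vacuous), and (iv) is the same Clifford-theoretic condition on the relative Weyl quotients $W_d=N/C\cong W(\tC_a)$ (or $W(\tB_a)$) for the appropriate $a$. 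The strategy mirrors \cite[\S5]{CS15}: build $\Lambda$ from a known extension map on the torus part and control the action of $\w N$ and $\wh N$ through the torus $\w\bT^{vF}$, using the parametrization $\Pi$ of $\Irr(N)$ recorded in the proof of Theorem~\ref{thm_loc_gen}.

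The main obstacle, in both cases, is the explicit structural analysis of $N=\bN^{vF}$ and $\wh N$ — in particular the construction of the $\wh N$-equivariant extension map (condition (iii), via maximal extendibility for $H_d\lhd V_d$) and the verification of the Clifford-theoretic condition (iv). This is exactly the point flagged in the introduction as requiring ``a lot of specific work'' on groups related to the extended Weyl group of type $\tC_l$ and its braid group; the subtlety is that $H=\bT\cap V$ is a $2$-group on which the field and graph data act, so controlling extendibility of its characters along $H_d\lhd V_d$ and the equivariance under $\wh V_d=VE\cap\wh N$ is delicate, and must be done with explicit braid-group elements realizing the regular classes. Everything else — the Sylow $d$-torus being a Sylow $d$-torus of $(\bG,vF)$, the $\w C$-transversal condition, and the reduction of part (ii) of the conclusion — follows formally once $v$ is chosen, using \cite[S09]{S09} and the cyclicity of $D$ together with \cite[Cor.~(11.22)]{Isa}.
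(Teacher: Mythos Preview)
Your treatment of the regular case is essentially the paper's: choose $v$ as a suitable power of the Coxeter-like element so that $\rho(v)$ is $d$-regular, verify $\rho(V_d)=W_d$ via the braid-group argument, reduce (iii) to maximal extendibility for $H_d\lhd V_d$ by Remark~\ref{rem_simpl}, and handle (iv) by an explicit analysis of $W_\xi$, $W_{\w\xi}$ inside the wreath product $W_d\cong\Cy_{2d_0}\wr\Sym_a$. (The centralizer is always $G(2d_0,1,a)$ here; the $G(e,2,a)$ alternative does not occur in type~$\tC$.)

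The non-regular case, however, has two real gaps. First, the structural input from \cite{S10b} is stronger than you state: not only is $C=T_1\times G_2$ with $G_2\cong\Sp_{2(l-l')}(q)$ (over $\FF_q$, not $\FF_{q^{d'}}$), but in fact $N=N_1\times G_2$ with $[N_1,G_2]=1$, where $N_1=\norm{\bG_1}{\bS}^{vF}$ and $d$ is \emph{regular} for $(\bG_1,vF|_{\bG_1})$. The symplectic factor is therefore inert under $N$, whence $W_\xi=W'_\zeta$ with $W'=N_1/T_1\cong\Cy_{2d_0}\wr\Sym_a$ (not $W(\tC_a)$ as you write), and conditions (iii) and (iv) of Theorem~\ref{thm_loc_gen} reduce \emph{literally} to the already-established regular case for $\bG_1$; the extension map is simply $\Lambda(\zeta\times\theta)=\Lambda_1(\zeta)\times\theta$. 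Second --- and this you do not anticipate --- the verification of condition~(ii) of Theorem~\ref{thm_loc_gen}, i.e.\ the splitting $(\w N\wh N)_{\Ind_C^N(\xi)}=\w N_{\Ind_C^N(\xi)}\wh N_{\Ind_C^N(\xi)}$, requires knowing that the stabilizer of $\theta\in\Irr(G_2)$ under diagonal and field automorphisms splits accordingly. This is precisely Theorem~\ref{MainTh}, the global result of Section~\ref{sec_4}. Your outline locates the ``main obstacle'' at (iii) and (iv), but in the non-regular case those are essentially free from the regular case for $\bG_1$, while (ii) is the step that genuinely needs new input --- namely the full force of the global theorem --- and cannot be handled by the torus-based methods you sketch.
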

We must now introduce some specific roots and subgroups. 

\begin{notation}\label{n_typC}
We assume that $p\not=2$, the root system of $\bG$ and its system of simple roots $\Delta:=\{\al_1,\ldots,\al_l\}$ is given as in \cite[1.8.8]{GLS3}. We identify the Weyl group $W$ of $\bG$, a Coxeter group of type $\tC_l$ with the subgroup of bijections $\si$ on $\{ \pm 1, \ldots \pm l\} $ with $\si(-i)=-\si(i)$ for all $1\leq i\leq l$, see also \cite[\S 5]{S10a}. This group is denoted by $\Sym_{\pm l}$. Via the natural identification of $\Sym_l$ with a quotient of $\Sym_{\pm l}$, the map $\rho:\bN\rightarrow W$ induces an epimorphism $\ov\rho\colon V\to \Sym_l$. Note that for this type the group $E_1$ of graph automorphisms is trivial and $D$ is cyclic. 
\end{notation}
We first concentrate on the case where $d$ is a regular number of $(\bG,F)$ in the sense of \cite[Def.~2.4]{S10a}. By \cite[Table 1]{S10a} this implies $d\mid 2l$. For that case we successively ensure the assumptions of Theorem \ref{thm_loc_gen_reg}. At the end of this section we give the proof of Theorem \ref{thm_sec6}.

\subsection{Construction of some Sylow $d$-torus}
For the construction of a Sylow $d$-torus we first choose a Sylow $d$-twist in the sense of \cite[Def.~3.1]{S09} using $v_0:=v_1\dots v_{l}$, where $v_k:=\n_{\al_k}(1)\in \bG$. Note that 
$\rho(v_0)=(1,2,\ldots,l,-1,-2,\ldots, -l)$.
Let $v=v_0^{\frac{2l}{d}}$ and let $d_0$ be the length of the $\ov\rho (v)$-orbits. 
Then 
$$d_0:=\begin{cases} d& 2\nmid d,\\
\frac d 2& 2\mid d.\\
\end{cases}$$
Let $a$ be the number of $\ov\rho(v)$-orbits of $\ov\rho(v)$ on $\{1,\ldots, l\}$, hence $a=\frac{l}{d_0}$.

For the study of $v$ it is helpful to use the braid group $\mathbb B$ of type $\tC_l$, the element ${\bf w}_0 \in \mathbb B$, corresponding to the longest element of $W$ and the natural epimorphism from $\mathbb B$ to $V$ as given in Remark 3.2 of \cite{S09}. By the definition of $v_0$ as Coxeter-like element we see that $v$ is the image of a $d$-th good root of ${\bf w}_0^2$ in $\mathbb B$ in the sense of \cite[Def.~3.8]{BrMi}. By \cite[Rem.~3.3(c)]{S10a} this implies
\begin{align}\label{Bessis_eq}
\rho(V_d)&=\cent W{\rho(v)},
\end{align}
or equivalently $N=V_d\bT^{vF}$ for $V_d:=\cent Vv$. Note $\cent W{\rho(v)}=\Cy_{2d_0}\wr\Sym_{a}$ by computations in $\Sym_{\pm l}$. According to \cite[Lem.~3.3]{S09}, some Sylow $d$-torus $\bS$ of $(\bG,vF)$ is contained in $\bT$. Altogether we see that the assumptions \ref{thm_loc_gen_reg}\ref{thm_loc_geni1} and \ref{thm_loc_gen_reg}\ref{thm_loc_geni2} hold.

\medskip
Next we verify assumption \ref{thm_loc_gen_reg}\ref{thm_loc_geni3} 
for the element $v$. Recall $H_d= \cent Hv$.

\begin{thm}\label{vgood} 
Maximal extendibility holds with respect to $H_d\lhd V_d$. 
\end{thm}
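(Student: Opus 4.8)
The plan is to reduce the extendibility question to a concrete computation inside the abelian group $H$ and the finite $2$-group $V$, exploiting the explicit structure $\cent W{\rho(v)}=\Cy_{2d_0}\wr\Sym_a$ established above. First I would describe $H_d=\cent Hv$ and $V_d=\cent Vv$ explicitly: since $\bT$ is of type $\tC_l$ and $p\neq 2$, $H=\bT\cap V$ is an elementary abelian $2$-group, and $v$ acts on it through $\rho(v)\in\Sym_{\pm l}$, which is a product of $a$ cycles of length $2d_0$. So $H$ decomposes $v$-equivariantly as a sum of $a$ isomorphic ``blocks'', each a permutation module for a cyclic group of order $2d_0$, and $V$ similarly fibers over $\Sym_l$ via $\ov\rho$ with kernel $H$. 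I expect $V_d$ to be (an extension related to) $\Cy_{2d_0}\wr\Sym_a$ lifted into $V$, acting on $H_d$ which is itself small — computing $H_d$ amounts to finding the $\rho(v)$-fixed points in the permutation module, a routine linear-algebra-over-$\FF_2$ task.

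The core of the argument is then the following: for each $\la\in\Irr(H_d)$, one must extend $\la$ to its stabilizer $(V_d)_\la$. Because $D$ is cyclic for type $\tC$ (Notation~\ref{n_typC}), Remark~\ref{rem_simpl} is not directly available in the form $F=F_0^m$ with $|E_1|=1$ unless that holds, but in any case the obstruction to extending $\la$ to $(V_d)_\la$ lies in $H^2((V_d)_\la/H_d,\CC^\times)$, and more precisely each $\la$ extends iff the corresponding class in the Schur multiplier of the quotient vanishes on $\la$. The strategy I would follow is the one from \cite[\S5]{CS15}: build an explicit extension map by working one wreath-product factor at a time. Concretely, decompose the situation along the $a$ orbits; on a single orbit the relevant group is a cyclic-by-cyclic (metacyclic) quotient, for which maximal extendibility of characters of the normal $2$-subgroup is checked by hand using that metacyclic groups over which we extend have cyclic Sylow subgroups in the relevant quotient, so $H^2$ vanishes or the character-theoretic obstruction is killed by a direct cocycle computation. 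Then the wreath structure with $\Sym_a$ on top is handled by the standard fact that maximal extendibility is inherited by wreath products with symmetric groups (Clifford theory plus the splitting $\Irr$ of $\Sym_a$ over its stabilizers — symmetric groups have the property that every character of a Young subgroup extends), exactly as in \cite[Prop.~5.?]{CS15}.

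The main obstacle I anticipate is the single-orbit (metacyclic) case: one must show that every character of the $\FF_2$-vector space $H_d$ restricted to one block extends to its stabilizer inside the cyclic group $\Cy_{2d_0}$ of order $2d_0$ — and the subtlety is the factor of $2$ (i.e. $2d_0$ rather than $d_0$), which is exactly the feature distinguishing type $\tC$ from type $\tA$ and which forces a careful analysis of whether $-1\in\rho(v)$-action interacts with the quadratic form/sign data on $H$. I would resolve this by choosing generators of $V$ adapted to the Chevalley relations (the elements $\n_{\al}(-1)$), computing the action of a generator of the cyclic stabilizer on the chosen preimage in $V$ of $H_d$, and verifying that its order in $V$ equals its order in $W$ (no ``doubling'' obstruction), so that a splitting exists and the extension can be written down explicitly. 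This is precisely the kind of computation with elements of the braid group and the extended Weyl group that the introduction flags as ``most of the additional effort''. Once maximal extendibility on one block is in hand, assembling the $\wh V_d$-equivariant extension map $\Lambda_0$ required by \ref{thm_loc_gen_reg}\ref{thm_loc_geni3} is formal, since $E_1=1$ and $D$ is cyclic make the condition \ref{thm_loc_gen_reg}(iii.2) vacuous.
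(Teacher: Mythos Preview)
Your plan has the right overall shape --- describe $H_d$ as $(\ZZ/2)^a$, handle one block, assemble over $\Sym_a$ --- but the assembly step has a genuine gap. You write that the $\Sym_a$ part ``is handled by the standard fact that maximal extendibility is inherited by wreath products with symmetric groups.'' That would be fine if $V_d$ were literally $\langle h_1,c_1\rangle\wr\Sym_a$, but it is not: only the \emph{quotient} $\rho(V_d)=W_d$ is a wreath product, and the extension $1\to H_d\to V_d\to W_d\to 1$ does \emph{not} split over the symmetric-group factor. Concretely, if $p_k\in V_d$ lifts the transposition swapping blocks $k$ and $k{+}1$, one computes (and this is the heart of the paper's proof) $p_k^2=h_kh_{k+1}\in H_d\setminus\{1\}$, so $p_k$ has order $4$. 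There is therefore no subgroup of $V_d$ isomorphic to $\Sym_a$ complementing the base, and no ``standard inheritance'' argument applies.

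You have also located the difficulty in the wrong place. The single-block case you flag as the ``main obstacle'' is actually trivial: $\langle h_k\rangle$ has order $2$ and the quotient $\langle h_k,c_k\rangle/\langle h_k\rangle$ is cyclic, so extension is automatic --- no order computation or splitting is needed there. The real content is the non-split $\Sym_a$ layer. The paper's argument puts $\lambda$ in a standard form ($\lambda(h_k)=1$ for $k\le k_0$ and $\lambda(h_k)=-1$ for $k>k_0$), so that $(V_d)_\lambda=CS$ with $C=H_d\langle c_1,\dots,c_a\rangle$ and $S=\langle p_k:k\neq k_0\rangle$, and then makes the key observation that
\[
S\cap H_d=\langle p_k^2:k\neq k_0\rangle=\langle h_kh_{k+1}:k\neq k_0\rangle\le\ker(\lambda),
\]
precisely because $\lambda(h_k)=\lambda(h_{k+1})$ whenever $k\neq k_0$. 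This is what allows the block-wise extension on $C$ to be pushed across $S$ via \cite[Lem.~5.8(a)]{CS15}. Without the explicit computation of $p_k^2$ and the verification that $\lambda$ kills it, the argument does not close; your $H^2$ remark does not rescue this either, since the relevant Schur multipliers are typically nontrivial and one must identify the specific cocycle.
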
 
Following Remark \ref{rem_simpl} this implies assumption \ref{thm_loc_gen_reg}\ref{thm_loc_geni3} in our case. 
\begin{proof}
The proof of the statement is divided into two parts. First the structure of $V_d$ is analysed in detail and then the extensions are constructed. 

Let $\oo v:=\ov\rho (v)$ and $\cO_k$ be the $\oo v$-orbit on $\{1,\dots ,l\}$ containing $k$. Since $a={ l\over d_0}$ the sets $\cO_1 ,\dots ,\cO_{a} $ give a disjoint partition of $\set{1,\ldots,l}$. For $1\leq k\leq {a}$ let $h_k:=\prod_{i\in \cO_k}\h_{2e_i}(-1)$. By Chevalley relations $h_k\in H_d$ is an involution. As for any $h\in H$ there exists $t_i\in \{\pm 1\}$ such that $h=\prod_{i=1}^l \h_{2e_i}(t_i)$ we see 
$H_d=\cent Hv=\spann<h_1> \times \cdots \times\spann<h_{a}>$. 
Hence $H_d$ is an elementary abelian $2$-group of rank $a$.

Similarly we determine elements of $V_d$. Note that $\rho(V_d)$ is a wreath product. Associated to its structure we choose elements in $V_d$ that get mapped to generators of the base groups in $(\Cy_{2d_0})^a$, and involutions generating the symmetric group. If $2\mid d$ let 
$$\ov c_1:=(1,a+1,\ldots, a (d_0-1)+1, -1, -a-1,\ldots, -a (d_0-1)-1 )\in \cent W {\rho(v)}.$$ 
This is the cycle of $\rho(v)$ (written as product of disjoint cycles) containing $1$. If $2\nmid d$ let 
\begin{align*}
\overline c_1':=&(1,2a+1, 4a+1, \ldots, (d-1)a+1, -a-1,\ldots, -(d-1)a-1 )\\&
(-1,-2a-1, -4a-1, \ldots, -(d-1)a-1, +a+1,\ldots, (d-1)a+1 )
 \in \cent W {\rho(v)}
\end{align*}
Let $\overline c_1:=\overline c_1'\prod_{i=0}^{d-1 } (ia+1,-ia-1) \in \cent W {\rho(v)}$. Note that $\overline c_1'$ is the pair of cycles of $\rho(v)$ (written as product of disjoint cycles) containing $\pm 1$. 

Equation \eqref{Bessis_eq} implies that there is some $c_1\in V_d$ with $\rho(c_1)=\ov c_1$. Next we prove that for $\Phi_1:=\Phi\cap {\spann<\pm e_i\mid i\in \cO_1> }$ and $V_{\cO_1}:=\spann< \n_\al (\pm 1)\mid \al\in \Phi_1>$ some $c_1\in V_{\cO_1}\cap V_d$ satisfies $\rho(c_1)=\ov c_1$. By definition $\n_\al(\pm 1)^2\in H$. Hence $c_1$ is contained in $\bT V_{\cO_1}$. Clearly $V_{\cO_1}\cap\bT\leq H$. Hence $c_1\in V$ implies $c_1\in H V_{\cO_1}$. By the Chevalley relations we see that $ H=(V_{\cO_1}\cap H)\times H'$ for some subgroup $H'\leq H$ and hence 
$$H V_{\cO_1} =V_{\cO_1} \times H'.$$ 
Then $c_1=c'_1h$ for unique $c'_1\in V_{\cO_1}$ and $h\in H'$. Here $(V_{\cO_1})^v=V_{\cO_1}$ and $(H')^v=H'$ imply $c'_1\in\cent{V_{\cO_1}}v$. Accordingly some $c_1\in ({V_{\cO_1}})^{vF}$ satisfies $\rho(c_1)=\ov c_1$. By this construction $\spann <c_1, h_1>=(V_{\cO_1})^{vF}$.

For $1\leq k \leq (a-1)$, let $p_k:=\prod_{i=0}^{d_0-1}{v_k}^{v^i}$. Using $\n_\al (\pm 1)^{\n_\beta (\pm 1)}=\n_{s_\beta (\al)}(\pm 1)$ from \cite[1.12.1(j)]{GLS3} together with the commutator formula from \cite[1.12.1(b)]{GLS3} we see 
$$\rho(p_k)(\cO_k)=\cO_{k+1} \text{ and }\rho(p_k)(\cO_{k+1})=\cO_{k}.$$ 
Moreover $p_k\in V_d$ since $v^{d_0}\in \Z(V)$. Like before straightforward computations show $p_k \in V_{\cO_k\cup\cO_{k+1}}$, where $V_{\cO_k\cup\cO_{k+1}}:=\spann< \n_\al (\pm 1)\mid \al\in \Phi_{k,k+1}>$ with $\Phi_{k,k+1}:=\Phi\cap {\spann<\pm e_i\mid i\in \cO_k\cup \cO_{k+1}> }$.

Recall $\oo v:=\overline \rho(v)$. The square of $p_k$ satisfies 
$$p_k^2=\prod_{i=0}^{d_0-1}({v_k}^{v^i})^2=
\prod_{i=0}^{d_0-1} \h_{e_{\oo{v}^i(k)}-e_{\oo{v}^i(k+1)}} (-1)
=\prod_{j\in \cO_k\cup \cO_{k+1}}\h_{2e_j}(-1)=
h_k h_{k+1}\in H_d .$$ 
Note that since the elements $v_1,\ldots, v_{a-1}$ satisfy the braid relations this applies also to $p_k$. For $2\leq k \leq a$ let $c_k:=(c_1)^{p_1 \cdots p_{k-1}}$. 

Clearly it is sufficient to prove that the characters of some $V_d$-transversal in $\Irr(H_d)$ extend to their inertia group in $V_d$.
Recall that $H_d=\spann<h_1>\times \cdots \times \spann <h_a>$. Accordingly every $\la\in \Irr(H_d)$ can be written as $\la_1\times \cdots \times \la_a$ with $\la_k\in\Irr(\spann <h_k>)$. 
Since $V_d$ acts on $H_d$ via permutation of the groups $\spann<h_k>$ and $|\spann<h_k>|=2$ every character of $H_d$ is $V_d$-conjugate to some $\la=\la_1\times \cdots \times \la_a\in\Irr(H_d)$ where there exists some integer $k_0$ ($0\leq k_0\leq a$) with
$$o(\la_k)=\begin{cases}
1 & k\leq k_0,\\
2 & k> k_0.
\end{cases}$$
Note that such a character satisfies 
$$\la^{c_k}=\la \text{ for all } 1\leq k\leq a \text{ and }
\la^{p_k}=\la\text{ for all }1\leq k<a\text{ with }k\neq k_0 .$$
This proves $(V_d)_\la= CS$, where $C:=H_d\Spann <c_{i'}| 1 \leq i'\leq a>$ and $S:=\Spann <p_k| 1 \leq k< a, k\neq k_0 >$.

Let $\Lambda_1$ be an extension map for $\spann <h_1> \lhd \spann <c_1, h_1>$. It exists since $\spann <c_1, h_1>/\spann <h_1>$ is cyclic. Let $\Lambda_k$ be the extension map with respect to $\spann <h_k>\lhd \spann <h_k,c_k>$ that is obtained by conjugating $\Lambda_1$ with $p_1\cdots p_{k-1}$. 

The character 
$$\w \lambda:=\Lambda_1(\la_1)\times\cdots \times \Lambda (\la_a)\in 
\Irr(\spann<h_1,c_1> \times \cdots \times \spann <h_a,c_a>)$$ 
is $p_k$-invariant for $ 1 \leq k< a \text{ with } k\neq k_ 0 $ by the definition of the extension maps $\Lambda_{k'}$. Note that 
%by computations in $\rho((V_d)_\la)$ the group 
$S$ is a supplement of $C$ in $(V_d)_\la$. Since $\rho(S)$ is the direct product of two symmetric groups 
$$ S\cap H_d= \Spann <p_k^2|1\leq k \leq a, \, k\neq k_0> $$ 
by the presentation of the symmetric groups as Coxeter groups. By the above computations of $p_k^2$ we see $ S\cap H_d \leq \ker(\lambda)$. According to \cite[Lem.~5.8(a)]{CS15} $\w\la$ and hence $\la$ extend to some $\wh \la\in\Irr((V_d)_\la)$ with $\wh\la(S)=1$. 
%
%We define $\Lambda(\la):=\hat \la$. Note that each character $\la\in \Irr(H_d)$ is $V_d$-conjugate to exactly one character $\la'$ of the above mentioned form. Accordingly we can define $\Lambda$ as an extension map with respect to $H_d\lhd V_d$ by $\Lambda(\la):=\Lambda(\la')^{v}$ for $v\in V_d$ with $(\la')^v=\la$. This gives us an $V_d$-equivariant extension map with the required properties. 
\end{proof}
%%%%%%%%%%%%%%%%%%%%%%%%%%%%%%%%%%%%%%%%%%%%%%%%%%%%%%%%%%%%%%%%%%%%%%%%%%%%%%%%%%%%%%%%%%%%%%%%

In the next step we verify assumption \ref{thm_loc_gen_reg}\ref{thm_loc_geni4}. Note that $D$ is cyclic. We recall the action of $N:=\bN^{vF}$ on $C:=\bT^{vF}$ from \cite{S10a}, then we describe the groups $W_\xi$ and $W_{\w\xi}$ for $\xi\in\Irr(C)$ and $\w\xi\in \Irr(\w C\mid \xi)$ and study their characters.

\begin{notation}
In the following let $\epsilon=(-1)^{\frac d {d_0}}$. We identify $\Cy_{q^{d_0} - \epsilon}$, the cyclic group of order $q^{d_0} - \epsilon$ with (a subgroup of) $\FF_{q^d}^\times$ and let the group $\Gal(\FF_{q^d}/\FF_q)\times \Cy_2$ act on it by Frobenius endomorphisms and inversion.
Let $\mathcal G\leq \Gal(\FF_{q^d}/\FF_q)\times \Cy_2$ be defined by
$$\mathcal G:= \begin{cases}
\Gal(\FF_{q^d}/\FF_q)\times \Cy_2& \text{, if }d=d_0 ,\\
\Gal(\FF_{q^d}/\FF_q)& \text{, if } d=2d_0.
\end{cases}$$
Note that $\mathcal G$ is cyclic, $2\mid|\mathcal G|$ and the involution of $\calG$ acts on $\Cy_{q^{d_0} - \epsilon}$ by inversion. Accordingly for $\xi\in\Irr(\Cy_{q^{d_0} - \epsilon})$ the statements  $2\mid |\calG_\xi|$ and $o(\xi)\in \{1,2\}$ are equivalent.
 
Let $f$ and $n$ be two positive integers with $2\mid f$. We write $(h_1,\ldots, h_n)\sigma$ for the elements of $\Cy_{f}\wr \Sym_n$, where $h_i\in \Cy_f$ and $\si\in \Sym_n$. Via $\Cy_f\leq \CC^\times$ we define by 
$$(h_1,\ldots, h_n)\sigma \mapsto \left (\prod_i h_i\right )^{\frac{f}{2}}$$ 
a linear character $\mu\in\Irr(\Cy_{f}\wr \Sym_n)$ of order $2$. 
\end{notation}

\begin{prop}\label{prop6_7}

	\begin{enumerate}[(a)]
		\item \label{prop6_7a}
		  $C=\bT^{vF}\cong (\Cy_{q^{d_0}-\epsilon})^a$ is the direct product of $a$ cyclic groups of order $q^{d_0}-\epsilon$. 
		\item \label{prop6_7b}
		Let $\calR$ be a $\calG$-transversal of $\Irr(\Cy_{q^{d_0}-\epsilon})$ and $\xi \in\Irr(C)$ with $\xi=\xi_1\times \cdots\times \xi_a$ for $\xi_i\in\calR$. Then 
		$W_\xi= \bigoplus_{\zeta\in\calR} \calG_\zeta \wr \Sym_{I_\zeta}$,
		where $I_\zeta:=\{i\mid \xi_i=\zeta\}$ for $\zeta\in\calR$.
		\item \label{prop6_7c}
		Let $\w\xi \in\Irr(\w C\mid \la )$, $\zeta_2\in\Irr(\Cy_{q^{d_0}-\epsilon})$ with $o(\zeta_2)=2$, $\mu\in \Irr(\calG_{\zeta_2}\wr \Sym_{I_{\zeta_2}})$ be defined as above and $\nu\in \Irr(W_\la) $ with
		$$\Res^{W_\la}_{{\calG_\zeta \wr \Sym_{n_\zeta}}} (\nu)=\begin{cases}
		1_{\calG_\zeta \wr \Sym_{n_\zeta}} & \zeta\neq \zeta_2,\\
		\mu& \zeta=\zeta_2.\end{cases}$$	
		Then $W_{\w\xi}=\ker(\nu)$ and 
		$ W_{\w\xi} =		\ker(\mu)\times 
		\bigoplus_{\zeta\in\calR\setminus \{\zeta_2\}} \calG_\zeta \wr \Sym_{I_\zeta}$.
				
		\item \label{prop6_7d} 
		For $I\subseteq \{1,\ldots,a \}$ let 
		$$\Delta^{I} \calG:=\{ (h_1,\ldots, h_{a})\id_{\Sym_{a}} \mid h_i = 1_\calG \text{ for }i\notin I
		\text{ and } h_i=h_j \text{ for }i,j\in I\}.$$ 
		Let 	$Z:= (\prod_{\zeta\in\calR} \Delta ^{I_\zeta} \calG )$ and $S$ be the subgroup of elements $\sigma\in \Sym_{a}$ that satisfy:
		\begin{itemize}
		\item for every $\zeta\in\calR$ there exists some $\zeta'\in\calR$ with
		 $\sigma(I_\zeta)= I_{\zeta'}$ and $\calG_\zeta=\calG_{\zeta'}$,
		\item $\sigma(i)< \sigma(j)$ for every $\zeta\in\calR$ and $i,j \in I_\zeta$ with $i<j$.
		\end{itemize}
		Then $\NNN_{W_d}(W_\xi )= (Z W_\xi ) \rtimes S$.
		\item \label{prop6_7e}
		We have $S_\nu=\{\sigma\in S \mid \si(I_{\zeta_2})=I_{\zeta_2}\}$ and $\NNN_{W_d}(W_\xi,W_{\w\xi})=(Z W_\xi) \rtimes S_{\nu}$.
 	\end{enumerate}
\end{prop}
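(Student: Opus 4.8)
The plan is to establish (a)--(e) in that order; (a), (b) and (d), (e) are essentially formal once $C=\bT^{vF}$ and the $W_d$-action on it are described explicitly, and the only real computation is in (c). For (a) I would work on the cocharacter lattice $Y=Y(\bT)\cong\ZZ^l$, on which $vF$ acts as $q\,w$ with $w=\rho(v)\in W=\Sym_{\pm l}$. The $\ov\rho(v)$-orbit decomposition $\{1,\dots,l\}=\cO_1\sqcup\cdots\sqcup\cO_a$ gives a $w$-stable (hence $vF$-stable) splitting $Y=\bigoplus_kY_k$ with $Y_k\cong\ZZ^{d_0}$, and from the shape of the signed cycle $\ov c_k$ (using $w=\rho(v_0)^{2l/d}$) one reads off that $w|_{Y_k}$ is a companion matrix of $X^{d_0}-\epsilon$, i.e. $w^{d_0}|_{Y_k}=\epsilon\cdot\id$. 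Then $Y_k/(qw-1)Y_k$ has order $|\det(qw-1\mid Y_k)|=q^{d_0}-\epsilon$, and it is cyclic: as $\gcd(q^{d_0}-\epsilon,q)=1$, the relation $qw(e)=e$ forces $w$ to send the image of a $\ZZ[w]$-generator $e$ of $Y_k$ to an integer multiple of it, so that image generates. Hence $C\cong(\Cy_{q^{d_0}-\epsilon})^a$; I would identify the $k$-th factor with the cyclic subgroup of $\FF_{q^d}^\times$ of that order (acted on by $\Gal(\FF_{q^d}/\FF_q)$ and inversion), noting that $c_k\in V_d$ acts on it through an isomorphism $\langle\ov c_k\rangle\xrightarrow{\sim}\calG$. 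Using \eqref{Bessis_eq} to identify $W_d=N/C$ with $\cent W{\rho(v)}\cong\Cy_{2d_0}\wr\Sym_a$, its action on $C$ is the standard action of $\calG\wr\Sym_a$ on $(\Cy_{q^{d_0}-\epsilon})^a$, and (b) is the usual wreath-product stabilizer computation: a permutation part must preserve the partition $\{I_\zeta\}$ (distinct members of the $\calG$-transversal $\calR$ lie in distinct $\calG$-orbits, so no base element can match them either) and the base part at $i$ lies in $\calG_{\xi_i}$, whence $W_\xi=\bigoplus_{\zeta\in\calR}\calG_\zeta\wr\Sym_{I_\zeta}$.

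Part (c) is the only step requiring genuine work. As $D$ is cyclic (Notation~\ref{n_typC}) and $|\w C:C\,\cent{\w C}G|=2$ for type $\tC_l$, $q$ odd, I would fix $t\in\w C=\w\bT^{vF}$ representing the nontrivial class and put $\nu:=\nu_t\in\Irr(W_\xi)$, the linear character with $\Lambda(\xi)^t=\Lambda(\xi)\nu$ as in the proof of Theorem~\ref{thm_loc_gen}; that proof also gives $W_{\w\xi}=\ker\nu$ for the extension $\w\xi$ of $\xi$ attached to $t$, with $o(\nu)\mid2$. Since $\w\bT/\bT$ is a central torus, $W_d$ acts trivially on $\w C/C$, so $[t,n]\in C$, and, $\Lambda(\xi)$ being linear, $\nu(nC)=\xi([t,n])$ for $n\in N_\xi$. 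It then remains to evaluate this on the generators of $W_\xi$ — the transpositions $p_k$ and the base generators $c_k^{[\calG:\calG_{\xi_k}]}$ of the factors $\calG_{\xi_k}$ — by computing $[t,n]\in\bT^{vF}$ from the explicit $c_k,p_k,t$ and the Chevalley relations of \cite[1.12.1]{GLS3}: one checks $\xi([t,n])=1$ on every $p_k$ and on every base generator at a position $i$ with $\xi_i\ne\zeta_2$ (the unique order-$2$ element of $\calR$, which is $\calG$-fixed), while on the $\zeta_2$-block one obtains exactly $\mu$. This is the $\nu$ of the statement, and the product decomposition $W_{\w\xi}=\ker\nu=\ker(\mu)\times\bigoplus_{\zeta\ne\zeta_2}\calG_\zeta\wr\Sym_{I_\zeta}$ follows from that of $W_\xi$.

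For (d) and (e) I would argue inside $W_d=\Cy_{2d_0}\wr\Sym_a$. If $(h,\sigma)$ normalizes $W_\xi=\bigoplus_\zeta\calG_\zeta\wr\Sym_{I_\zeta}$, conjugating the symmetric factors forces $\sigma$ to permute the blocks $I_\zeta$ with $\calG_\zeta=\calG_{\sigma(I_\zeta)}$; pulling out the ``ordered'' block-matching part (an element of $S$) and a factor in $\prod_\zeta\Sym_{I_\zeta}\le W_\xi$ reduces to $\sigma=\id$, and then conjugating each $\Sym_{I_\zeta}$ by $(h,\id)$ forces $h$ to be constant on $I_\zeta$ modulo $\calG_\zeta$, i.e. $h\in Z\cdot B_\xi$ with $B_\xi$ the base group of $W_\xi$; since $Z$ and $S$ normalize $W_\xi$, $S\cap ZW_\xi=1$, and $S$ normalizes $ZW_\xi$, this gives $\NNN_{W_d}(W_\xi)=(ZW_\xi)\rtimes S$. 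For (e), since $W_{\w\xi}=\ker\nu$ with $o(\nu)\mid2$, an element of $\NNN_{W_d}(W_\xi)$ normalizes $W_{\w\xi}$ iff it fixes $\nu$; $Z$ centralizes $W_\xi$, $W_\xi$ normalizes $\ker\nu$, and $\sigma\in S$ fixes $\nu$ iff $\sigma(I_{\zeta_2})=I_{\zeta_2}$ — $\mu$ is invariant under permuting the coordinates of $I_{\zeta_2}$, whereas if $\sigma(I_{\zeta_2})=I_{\zeta'}\ne I_{\zeta_2}$ then $o(\zeta')\ne2$ (a cyclic group has a unique order-$2$ character), so $\nu^\sigma\ne\nu$; hence $S_\nu=\{\sigma\in S:\sigma(I_{\zeta_2})=I_{\zeta_2}\}$ and $\NNN_{W_d}(W_\xi,W_{\w\xi})=(ZW_\xi)\rtimes S_\nu$.

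The genuine obstacle is Part (c): identifying $\nu$ means tracking the chosen extension $\w\xi$ and the $N_\xi$-conjugation on $t$ through the explicit Chevalley-/braid-group elements, and verifying that the commutators $[t,n]$ are killed by $\xi$ on all generators of $W_\xi$ except the order-$2$ component, where they reproduce $\mu$. Parts (a), (b), (d), (e) are then bookkeeping in the twisted torus and in the wreath product $\Cy_{2d_0}\wr\Sym_a$.
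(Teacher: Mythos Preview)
Your proposal is correct and for parts (a), (b), (d), (e) essentially matches the paper --- the paper cites \cite[\S5--6]{S10a} for (a) rather than redoing the cocharacter computation, and for (d), (e) argues slightly more tersely inside $\calG^a\rtimes\Sym_a$, but the content is the same.

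The genuine difference is in (c). You compute $W_{\w\xi}$ via the Clifford-theoretic route: invoke $W_{\w\xi}=\ker\nu_t$ from the proof of Theorem~\ref{thm_loc_gen}, then identify $\nu_t$ by evaluating $\xi([t,n])$ on explicit generators $c_k,p_k$ of $N_\xi$ (using that $\Lambda(\xi)$ has degree~$1$). The paper instead works directly with $\w C$: it notes that $\Z(\bG)=\langle\prod_i\h_{2e_i}(-1)\rangle$, embeds $\w C$ into the central product of $(\Cy_{2(q^{d_0}-\eps)})^a$ with $\Cy_{2q-2}$, and writes down the $\calG$-action on the enlarged cyclic factor as $\psi\mapsto\psi^{-\eps((q^{d_0}-\eps)+q)}$. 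One then reads off, for each $\zeta\in\calR$, whether a chosen extension $\w\zeta\in\Irr(\Cy_{2(q^{d_0}-\eps)})$ has $\calG_{\w\zeta}=\calG_\zeta$ or index~$2$; the latter happens exactly for $\zeta=\zeta_2$, which gives $W_{\w\xi}$ and hence $\ker\nu$ directly. The paper's approach avoids the extension map $\Lambda$ and any commutator calculation with an explicit $t\in\w C$, at the cost of describing $\w C$ concretely; your approach gets $W_{\w\xi}=\ker\nu$ for free from the general machinery but defers the actual identification of $\nu$ with the stated $\mu$ to a Chevalley-relation computation that you correctly flag as the one nontrivial step.
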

\begin{proof}
Part \ref{prop6_7a} follows from the considerations in \cite[Sect.~5 and 6]{S10a}: $C\cong \mathbf T_1^{vF}\times\cdots \times \mathbf T_a^{vF}$ in the terminology of \cite{S10a}. Hence $C$ is the direct product of $a$ groups isomorphic to $\Cy_{q^{d_0}-\epsilon}$. 

For the proof of \ref{prop6_7b} we write $\xi \in\Irr(C)$ as $\xi_1\times \cdots \times \xi_{a}$ with $\xi_i\in\Irr(\Cy_{q^{d_0}-\epsilon})$. Computations with the Chevalley relations show that $V_d$ acts on $C$ by $\calG$ on each factor and permutation of the $a$ factors. More precisely an element $c_i$ acts by an element generating $\calG$ on the $i$th factor and $p_k$ swaps the $k$th and the $(k+1)$st factor. Each character of $C$ is $V_d$-conjugate to one considered here. Then straightforward computations show the statement. 

In order to prove part \ref{prop6_7c} recall that the center of $\bG$ is generated by the element $\prod_{i=1}^l\h_{2e_i}(-1)$, see also \cite[Table 1.12.6]{GLS3}. This implies that $\widetilde C$ can be embedded into the central product of $(\Cy_{2(q^{d_0}-\eps)})^a$ and $\Cy_{2q-2}$. 
On these groups the induced action of $W_d$ is given by the action of a generating element of $\calG$ on 
$\Cy_{2(q^{d_0}-\eps)}$ via 
$\psi\mapsto \psi^{-\eps((q^{d_0}-\eps)+q)}$. For each $\zeta\in\calR$ one chooses some extension $\widetilde \zeta\in\Irr(\Cy_{2(q^{d_0}-\eps)})$. For each $\zeta\in \calR$ we have $|\calG_\zeta:\calG_{\w\zeta} |\in \{1,2\}$. By the above this implies $\calG_\zeta=\calG_{\w\zeta}$ unless $\zeta=\zeta_2$. 

For part \ref{prop6_7d} we see $\calG^{a}$ as a subgroup of $W_d$. Then $\norm {\calG^{a}} {W_\xi}$ is contained in $Z W_\xi $. On the other hand $S=\norm {\Sym_a}{W_\xi}$ and $\norm {W_d} {W_\xi} \calG^a\leq \calG^a\rtimes S$. Those considerations then prove the statement. 

For the last part \ref{prop6_7e} we see that $Z$ acts trivially on $W_\xi$, and hence $\nu$ is $Z$-stable. Accordingly $Z$ stabilizes $\ker(\nu)$. On the other hand $\norm {S} {W_{\w\xi}}= \norm S {\ker (\nu)}$. For $\sigma\in S$ we have $\nu^\sigma=\nu$ if and only if $\si(I_{\zeta_2})=I_{\zeta_2}$.
\end{proof}

The character theory for the groups $W_\xi$ and $W_{\w\xi }$ is as follows. 
\begin{prop}
For $K:=\NNN_{W_d}(W_\xi ,W_{\w\xi })$ and $\eta_0\in\Irr(W_{\w\xi })$ every $\eta\in\Irr(W_\xi \mid \eta_0)$ is $K_{\eta_0}$-invariant.
\end{prop}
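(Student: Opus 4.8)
The statement asserts that for $K=\NNN_{W_d}(W_\xi,W_{\w\xi})$ and $\eta_0\in\Irr(W_{\w\xi})$, every $\eta\in\Irr(W_\xi\mid\eta_0)$ is $K_{\eta_0}$-invariant. The strategy is to exploit the explicit product structures established in Proposition~\ref{prop6_7}. Recall $W_\xi=\bigoplus_{\zeta\in\calR}\calG_\zeta\wr\Sym_{I_\zeta}$, that $W_{\w\xi}=\ker(\mu)\times\bigoplus_{\zeta\neq\zeta_2}\calG_\zeta\wr\Sym_{I_\zeta}$ differs from $W_\xi$ only in the $\zeta_2$-block (where $\ker(\mu)$ has index $2$), and that $K=(ZW_\xi)\rtimes S_\nu$ with $S_\nu$ stabilising $I_{\zeta_2}$. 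First I would observe that $Z$ acts trivially on $W_\xi$ (as noted in the proof of \ref{prop6_7e}), so $K/W_\xi$ is essentially $S_\nu$, which permutes the blocks $\calG_\zeta\wr\Sym_{I_\zeta}$ among themselves while preserving $\calG_\zeta$ and fixing the distinguished block $I_{\zeta_2}$ setwise.

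\emph{Reduction to one block.} Since everything is a direct sum/product over $\zeta\in\calR$, a character $\eta\in\Irr(W_\xi)$ factors as $\eta=\bigotimes_{\zeta}\eta_\zeta$ with $\eta_\zeta\in\Irr(\calG_\zeta\wr\Sym_{I_\zeta})$, and $\eta\in\Irr(W_\xi\mid\eta_0)$ means $\eta_\zeta$ restricts to the appropriate constituent of $\eta_0$ for $\zeta\neq\zeta_2$ and $\eta_{\zeta_2}$ lies above the given constituent of $\eta_0$ on $\ker(\mu)$. An element of $S_\nu$ that fixes $\eta_0$ permutes the isomorphic summands compatibly; the $K_{\eta_0}$-action on $\eta$ therefore decomposes into (i) permuting equal $\eta_\zeta$'s for $\zeta\neq\zeta_2$ — here $\eta_\zeta$ is \emph{determined} by $\eta_0$ since for those blocks $W_{\w\xi}$ and $W_\xi$ agree, so $\eta$ restricted to those blocks equals $\eta_0$ restricted there and is automatically permutation-invariant whenever $\eta_0$ is — and (ii) the action on $\eta_{\zeta_2}$ by the stabiliser of $I_{\zeta_2}$, which acts on $\calG_{\zeta_2}\wr\Sym_{I_{\zeta_2}}$ itself. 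So the whole problem reduces to: if $A=\calG_{\zeta_2}\wr\Sym_{I_{\zeta_2}}$, $B=\ker(\mu)\lhd A$ of index $2$, and $\theta\in\Irr(B)$, then any $\eta_{\zeta_2}\in\Irr(A\mid\theta)$ is $A_\theta$-invariant — but here the outer group acting is $A$ itself conjugating $B$, i.e. inner, so this is the statement that \emph{any} $A$-conjugate stabilising $\theta$ fixes $\eta_{\zeta_2}$.

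\emph{The index-$2$ argument.} This last point is the crux. Since $[A:B]=2$ and $B\lhd A$, for $\theta\in\Irr(B)$ either $\theta$ extends to $A$ (two extensions, differing by $\mu$) or $\Ind_B^A\theta$ is irreducible; in the first case $\Irr(A\mid\theta)=\{\w\theta,\w\theta\mu\}$ and in the second $\Irr(A\mid\theta)$ is a singleton. In \emph{both} cases $\Irr(A\mid\theta)$ consists of characters vanishing off $B$ or extensions, and the key fact I would invoke is that $A$ normalises $B$, so conjugation by $a\in A$ sends $\Irr(A\mid\theta)\to\Irr(A\mid\theta^a)$; if $a$ fixes $\theta$ then it permutes $\Irr(A\mid\theta)$, but each element of $\Irr(A\mid\theta)$ is already $A$-invariant (it is a character of $A$!). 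That is the trivial observation that makes it work: $\eta\in\Irr(W_\xi)$ is by definition a character of $W_\xi$, hence invariant under all of $W_\xi$, and $K_{\eta_0}$ acts on it only through $K_{\eta_0}/W_\xi\cap\Stab$, which by the block analysis permutes isomorphic factors — and permuting isomorphic tensor factors of a tensor product $\bigotimes\eta_\zeta$ fixes the product precisely when the multiset $\{\eta_\zeta\}$ within each $\calG$-orbit is $S_\nu$-stable, which is forced because $\{\eta_0|_{\text{block}}\}$ already is. The main obstacle, then, is purely bookkeeping: carefully matching up which constituent of $\eta_0$ forces which $\eta_\zeta$, and checking that in the one block where $W_\xi\neq W_{\w\xi}$ the multiplicity-one behaviour (from index $2$, via \cite[Cor.~(11.22)]{Isa} or Clifford theory) prevents $S_\nu$ from nontrivially moving $\eta_{\zeta_2}$ relative to $\eta_0$. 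I expect no genuine difficulty — it is a clean consequence of the structural description in Proposition~\ref{prop6_7} together with elementary Clifford theory for the index-$2$ subgroup $\ker(\mu)$.
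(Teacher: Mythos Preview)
Your proposal is correct and follows the paper's approach: reduce to $S_{\nu,\eta_0}$ via the product decomposition of Proposition~\ref{prop6_7}, observe that for $\zeta\neq\zeta_2$ one has $\eta_\zeta=\eta_{0,\zeta}$ so those blocks are automatically permuted correctly, and treat the $\zeta_2$-block separately. The paper's argument for that block is simpler than your innerness/Clifford discussion, however: by the order-preserving condition built into the definition of $S$ in Proposition~\ref{prop6_7}\ref{prop6_7d}, any $\sigma\in S_\nu$ (which stabilises $I_{\zeta_2}$) must restrict to the \emph{identity} on $I_{\zeta_2}$, so it acts trivially on that factor and your entire index-$2$ analysis is unnecessary.
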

\begin{proof}
As seen in \ref{prop6_7}\ref{prop6_7e} we have 
$K= Z W_\xi \rtimes S_\nu$.
Since $Z$ acts trivially on $W_\xi $ it is sufficient to check if 
there exists some $\eta$ that is invariant under $S_{\nu, \eta_0}$. 
%This holds if $|\Irr(W_\zeta|\eta_)|=1$. Hence we can assume in the following $|\Irr(W_\zeta|\eta_)|=1$ since $|W_\zeta/W_{\w\zeta}$

As seen in \ref{prop6_7}\ref{prop6_7d} the groups $W_{\w\xi }$ and $W_\xi $ are direct products of groups indexed by $\calR$ which differ only in the factor associated with $\zeta_2$: in $W_\xi $ the factor associated with $\zeta_2$ is $\calG_{\zeta_2}\wr \Sym_{I_{\zeta_2}}$ and in $W_{\w\xi }$ it is $\ker(\mu)$.

Hence $\eta_0$ is the product of characters $\eta_{0,\zeta}$ ($\zeta\in\calR$) with $\eta_{0,\zeta_2}\in\Irr(\ker(\mu))$ and $\eta_{0,\zeta}\in\Irr(\calG_\zeta\wr\Sym_{I_\zeta})$ for $\zeta\in\calR\setminus \{\zeta_2\}$. Analogously $\eta\in\Irr(W_\xi \mid \eta_0)$ is the product of
$\eta_{\zeta}\in\Irr(\calG_\zeta\wr\Sym_{I_\zeta})$ with $\eta_{0,\zeta}=\eta_{\zeta}$ for $\zeta\neq \zeta_2$ and $\eta_{\zeta_2}\in\Irr(\calG_{\zeta_2}\wr\Sym_{I_{\zeta_2}} \mid \eta_{0, \zeta_2})$. 

Let $\si\in S_{\nu,\eta_0}$. We see that by definition $\si$ acts trivially on the factor related to $\zeta_2$. On the other hand this is the only factor 
in which $\eta_0$ and $\eta$ differ. 
Accordingly the character $\eta$ is $\si$-invariant. 
\end{proof}

The previous statement ensures the assumption \ref{thm_loc_gen_reg}\ref{thm_loc_geni4}. In the proof of Theorem \ref{thm_sec6} we can focus on the case where $d$ is a non-regular number of $(\bG,F)$. 

\begin{proof}[Proof of Theorem \ref{thm_sec6}]
If $d$ is a regular number of $(\bG,F)$ then by the previous results we can apply Theorem \ref{thm_loc_gen_reg} and thus obtain Theorem \ref{thm_sec6} in that case.

In the remaining case we apply Theorem \ref{thm_loc_gen} by deducing the required assumptions from results known in the regular case. According to results from \cite{S10b} the  groups considered in the non-regular case are closely related to groups occuring in the regular case.

Let $d_0:=d$ for odd $d$ and $d_0:=\frac d 2$ otherwise. Let $l'$ be the maximal multiple of $d_0$ with $l'\leq l$. For $a:= \frac {l'}{d_0}$ let $v:=(v_1 \cdots v_{l'-1} \n_{2e_{l'}}(-1))^{a}$. Then $\bT$ contains a Sylow $d$-torus $\bS$ of $(\bG,vF)$ according to \cite[3.2]{S10b}. (This ensures assumption \ref{thm_loc_gen}\ref{5_3_i}.)

Let $C:=\cent {\bG^{vF}}\bS$ and $N:=\norm {\bG^{vF}} \bS$. 
Let $\bG_1$, $\bT_1$ and $\bG_2$ be defined as in \cite[2.2, 2.3]{S10b}. Let $T_1:=\bT_1^{vF}$ and $G_2:=\bG_2^{vF}$. By the proof of \cite[5.2]{S10b} we see $L=T_1\times G_2$, where $d$ is a regular number of $(\bG_1,F|_{\bG_1})$ and $\bS$ is a Sylow $d$-torus of $(\bG_1,F|_{\bG_1})$. 

According to the proof of \cite[5.1]{S10b} the group $N:=\norm {\bG^{vF}} \bS$ satisfies $N=N_1\times G_2$, where $N_1:=\norm {\bG_1} \bS^{vF}$. Accordingly any character $\xi\in\Irr(C)$ can be written
as $\zeta \times \theta$ with $\zeta\in\Irr(T_1)$ and $\theta\in\Irr(G_2)$.
By the proof of \cite[5.2]{S10b} we have $[N_1,G_2]=1$, 
$N_\xi=N_{1,\zeta}\times G_2$ and hence $W_\xi=W'_{\zeta}$ for $W':=N_1/T_1$.

By the description of $\Z(\bG)$ given in \cite[Table 1.12.1]{GLS3} we see that the diagonal automorphism of $\bG^{vF}$ induced by an element of $\bT$ induces
a diagonal automorphism of $\bG_1^{vF}$ on $\bG_1^{vF}$
and a diagonal automorphism of $\bG_2^{vF}$ on $\bG_2^{vF}$. 
Any diagonal automorphism of $\bG^{vF}$ induced by some element of $\bT$ acts trivially on $\Irr(T_1)$. 
Analogously the field automorphism $F_0$ induces field automorphisms of $\bG_1^{vF}$ and of $\bG_2^{vF}$. 

By Theorem \ref{MainTh}, $\theta$
satisfies $(\w N\rtimes E)_{\theta}=\w N_{\theta}\rtimes E_\theta$.
Since $\wN$ and $N$ act the same on $\Irr(T_1)$, this implies for $\xi=\zeta\times \theta$ the equation
$ \wN_{\Ind^N_C(\xi)} \rtimes E_{\Ind^N_C(\xi)} =(\wN E)_{\Ind^N_C(\xi)}$.
This proves \ref{thm_loc_gen}\ref{thm_loc_genii1} for $\calT=\Irr(C)$.

By the proof of Theorem \ref{thm_loc_gen_reg} there exists some $N_1 \rtimes E$-equivariant extension map $\Lambda_1$ with respect to $T_1\lhd N_1$. 
Let $\Lambda$ be the extension map with respect to $C\lhd N$ given by 
$\zeta \times \theta\mapsto \Lambda_1(\zeta)\times \theta$. By this construction $\Lambda$ is $NE$-equivariant as required in \ref{thm_loc_gen}\ref{thm_loc_genii2}. 
%(Recall again that because of $|E_1|=1$ following \ref{rem_simpl} the part \ref{thm_loc_gen}\ref{5_3_iii_2} has not to be checked.)

It remains to prove that for every $\xi\in\Irr(C)$ the characters of $W_\xi$ satisfy the assumption \ref{thm_loc_gen}\ref{thm_loc_genii3}. The group $W_\xi$ coincides with $W_\zeta$ because of $[N_1,G_2]=1$. 

Let $\w\xi\in\Irr(\w C_\xi\mid \xi)$. If $\w C_\xi=C\cent {\w C} G$ we have $W_{\w\xi}=W_\zeta=W_\xi$. In that case the assumption is clear.
If $\w C=\w C_\xi$ the character $\theta$ is $\w C$-invariant and $W_{\w\zeta}=W_{\w\xi}$ where $\w\zeta$ is an extension of $\zeta$ to $\w T_1$ where $\w T_1:=\{ t\in \bT_1 \mid F(t) t^{-1} \in \bZ(\bG_1)\}$. The groups $W'$, $W_\zeta$ and $W_{\w\zeta}$ occur in the regular situation for the group $(\bG_1,vF|_{\bG_1})$. 
In the previous section we have proven that those groups satisfy \ref{thm_loc_gen_reg}\ref{thm_loc_geni4}. This implies the analogous condition \ref{thm_loc_gen}\ref{thm_loc_genii3}. Altogether this verifies the required assumptions from \ref{thm_loc_gen} and implies the statement of Theorem \ref{thm_sec6}.
\end{proof}

\section{Proof of Theorem \ref{thmA}}\label{end}
For the proof of Theorem \ref{thmA} by an application of Theorem 2.12 of \cite{S12} we construct another extension map and another character correspondence. Now $\bG$ and $\w\bG$ are as in Sect. 3 and 5.

\begin{thm}\label{thm_ext_map_wN}
There exists an $\wN E$-equivariant extension map $\w\Lambda$ for $\Irr(\w C)$ with respect to $\w C\lhd \w N$ that 
is compatible with the action of $\Irr(\w C| 1_C)$ by multiplication.
\end{thm}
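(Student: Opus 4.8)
The plan is to construct $\w\Lambda$ from the already-established extension map $\Lambda$ for $C\lhd N$ (Theorems~\ref{vgood}, \ref{thm_loc_gen_reg}) by exploiting the fact that $\w N=\w C N$, so that the additional information needed concerns only the action of the abelian group $\w C$. First I would recall from the proof of Theorem~\ref{thm_loc_gen} the explicit parametrization $\Pi\colon\calP\to\Irr(N)$ together with the associated cocycle-type data $\nu_t$ defined by $\Lambda(\xi)^t=\Lambda(\xi^t)\nu_t$ for $t\in\w\bT^{vF}$, and the description of $\w C$ from Proposition~\ref{prop6_7}. For $\w\xi\in\Irr(\w C)$ lying over $\xi\in\Irr(C)$, one has $\w N_{\w\xi}=\w C N_{\w\xi}$ and, by Clifford theory for the abelian quotient $\w C/C$, the character $\w\xi$ determines $\xi$ up to $\w C$-conjugacy and an extension of $\w\xi$ to $\w N_{\w\xi}$ is governed by an extension of $\xi$ to $N_{\w\xi}$; so the task reduces to producing, compatibly with $\w N E$-conjugation, an extension of $\Res^{\w C}_C\w\xi$ to $N_{\w\xi}$ and twisting it by the appropriate linear character of $\w C$.

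The key steps, in order, would be: (1) show maximal extendibility holds for $\w C\lhd \w N$ --- since $\w N/\w C\cong N/C=W_d$ is a wreath product $\Cy_{2d_0}\wr\Sym_a$, one can argue factor by factor along the decomposition of Proposition~\ref{prop6_7}, reducing to cyclic-over-cyclic extensions and to the symmetric-group factors where Lemma~5.8 of \cite{CS15} applies exactly as in the proof of Theorem~\ref{vgood}; (2) for each $\w\xi$, define $\w\Lambda(\w\xi)$ on $\w N_{\w\xi}$ by first picking the extension $\Lambda(\xi)\eta$-type building block from the parametrization $\Pi$ and then extending across $\w C$ using that $\w C$ is abelian and $\w C\le\w N_{\w\xi}$; the choice must be normalized so that it depends only on $\w\xi$ and not on auxiliary choices, which is where the uniqueness clause in the construction of $\Lambda$ (the ``unique extension map'' statement following \cite[4.3]{S09} used in the proof of Theorem~\ref{thm_loc_gen_reg}) is invoked; (3) check $\w N E$-equivariance: $N E$-equivariance of $\Lambda$ is already known, and $\w C$-equivariance is automatic because $\w C$ is abelian and normal in $\w N$, so $\w\Lambda(\w\xi^{\w n})=\w\Lambda(\w\xi)^{\w n}$ follows by transporting the normalized construction; (4) verify compatibility with multiplication by $\Irr(\w C\mid 1_C)$: such a linear character $\beta$ is trivial on $C$, so $\w\xi$ and $\beta\w\xi$ restrict to the same $\xi$, have the same inertia group $\w N_{\w\xi}=\w N_{\beta\w\xi}$, and $\beta$ extends (trivially on $N$) to a linear character $\hat\beta$ of $\w N_{\w\xi}$ with $\w\Lambda(\beta\w\xi)=\hat\beta\,\w\Lambda(\w\xi)$ by the normalization.

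The main obstacle I expect is step (2)–(4): pinning down a choice of extension that is simultaneously canonical enough to be $\w N E$-equivariant \emph{and} to satisfy the $\Irr(\w C\mid 1_C)$-compatibility. These two requirements interact, because equivariance forces the extension to be constant on $\w N E$-orbits while the multiplicativity forces a specific relation between extensions of $\w\xi$ and of $\beta\w\xi$; reconciling them amounts to showing the relevant obstruction classes in $H^2(\w N_{\w\xi}/\w C,\CC^\times)$ vanish coherently. The way through is to reuse the normalization already present in the construction of $\Lambda$ (trivial on the supplement $S$, via \cite[Lem.~5.8]{CS15}) and to note that $\w N_{\w\xi}/\w C$ is again a direct product of a cyclic group and symmetric groups, so Schur multipliers are under control and the factor-by-factor argument closes the gap, exactly paralleling the treatment of the analogous statement in type $\tA$ in \cite[\S 5]{CS15}.
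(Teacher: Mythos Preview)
Your construction is essentially the paper's in the \emph{regular} case: when $C=\bT^{vF}$ is a torus, $\Res^{\w C}_C\w\xi$ is always irreducible, and the paper likewise defines $\w\Lambda(\w\xi)$ as the common extension of $\w\xi$ and $\Res^{N_\xi}_{N_{\w\xi}}\Lambda(\xi)$ via \cite[4.2]{S09}; your steps (2)--(4) amount to exactly this, and the normalizations inherited from $\Lambda$ give both equivariance and $\Irr(\w C\mid 1_C)$-compatibility. (A small quibble: the parametrization $\Pi$ of $\Irr(N)$ is not what is needed in step~(2); you want the extension $\Lambda(\xi)$ itself, not an induced character $\Pi(\xi,\eta)$.)

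The genuine gap is that Theorem~\ref{thm_ext_map_wN} must hold for \emph{all} $d$, since it feeds into Theorem~\ref{thm_bij}, and your argument is written entirely in the regular setting (you invoke Proposition~\ref{prop6_7} and the wreath-product description $W_d\cong\Cy_{2d_0}\wr\Sym_a$, both of which are specific to that case). When $d$ is non-regular one has $C=T_1\times G_2$ with $G_2\cong\Sp_{2(l-l')}(q)$, and $\w C$ acts on the factor $G_2$ by a non-inner diagonal automorphism. Hence for $\theta\in\Irr(G_2)$ not $\w C$-invariant the restriction $\Res^{\w C}_C\w\xi$ is reducible, and your reduction ``produce an extension of $\Res^{\w C}_C\w\xi$ to $N_{\w\xi}$'' no longer makes sense as stated: one must first choose a constituent $\xi=\zeta\times\theta$ below $\w\xi$, and do so in a way that is $\w NE$-equivariant. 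The paper resolves this by fixing a $\w CE$-transversal $\calT_2$ of the non-$\w C$-invariant characters of $G_2$ and then invoking Theorem~\ref{MainTh} (applied to $G_2$) to guarantee that such a transversal is automatically an $\w NE$-transversal, so the resulting $\w\Lambda$ is $\w NE$-equivariant. This use of the global stabilizer result is the missing ingredient in your proposal; the cohomological control you sketch in the final paragraph does not supply it, because the issue is not a vanishing obstruction in $H^2$ but a coherent choice of constituent under the outer automorphism action on $\Irr(G_2)$.
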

\begin{proof}
Let $\Lambda$ be the $NE$-equivariant extension map for $\Irr(C)$ with respect to $C\lhd N$.
In the regular case the existence of $\Lambda$ follows from 
Theorem \ref{vgood} by the proof of Theorem \ref{thm_loc_gen}. 
In the other case such a map was constructed in the proof of Theorem \ref{thm_sec6}.

For characters $\xi:=\zeta \times \theta\in\Irr(C)$ and $\w\xi\in\Irr(\w C\mid \xi)$ with $\Res^{\w C}_C\w\xi=\xi$ we define $\w\Lambda(\w\xi)$ as the common extension of $\w\xi$ and $\Res_{N_{\w\xi}}^{N_{\zeta \times \theta}}(\Lambda(\zeta \times \theta))$, see \cite[4.2]{S09}. 

Let $\calT_2\subseteq \Irr(G_2)$ be a $\w CE$-transversal of the set of non $\w C$-invariant characters of $G_2$. Then $\calT_2$ is a $\w NE$-transversal of the same set. Let $\theta\in\calT_2$ and $\w\xi\in\Irr(\w C\mid \theta)$. Then $\Res^{\w C}_C\w\xi$ is not irreducible. Let $\zeta\in\Irr(T_1)$ with $\w\xi\in\Irr(\w C\mid \zeta \times \theta)$. Then there exists a unique extension of $\w\xi$ in 
$\Irr(\w N_{\w\xi}\mid \Res_{N_{1,\w\xi}}^{N_{1,\zeta}} (\Lambda_1(\zeta)) \times \theta)$. We define this as $\w \Lambda(\w\xi)$. According to Theorem \ref{MainTh} there is a unique way to extend this to a $\w N E$-equivariant extension map $\w\Lambda$. By this construction $\w\Lambda$ has all the required properties. 
\end{proof}

We apply this to prove the following theorem that ensures assumption 2.12(iv) of \cite{S12}.
\begin{thm}\label{thm_bij}
 Let $\ell$ be an odd prime with $\ell\nmid q$ and $d$ the order of $q$ in $(\ZZ/\ell \ZZ)^\times$, $\bS$ a Sylow $d$-torus of $(\bG,F)$, $N\deq \NNN_G(\bS)$ and
 $\w N\deq \NNN_\wG(\bS)$. Let $\cG\deq \cup_{\chi\in\Irrl(G)}\Irr\big (\w G\mid \chi\big)$
 and $\cN\deq \cup_{\psi\in\Irrl(N)}\Irr\big (\w N\mid \psi\big )$. Then there is a
 $(\wG\rtimes D)_{\bS}$-equivariant bijection
$\w \Omega: \cG \longrightarrow \cN$
 with $\w\Omega(\cG\cap\Irr(\w G\mid \nu))= \cN\cap\Irr(\w N\mid \nu)$ for
 every $\nu\in\Irr(\Z(\w G))$, and
 $\w\Omega(\chi\delta)=\w\Omega(\chi)\Res^\wG_{\w N}(\delta)$
 for every $\delta \in \Irr(\w G\mid 1_G)$ and $\chi\in\cG$.
\end{thm}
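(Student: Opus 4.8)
The plan is to reduce this statement, via Jordan decomposition for $\w G$ and the known structure of $\w N$ relative to $N$, to a purely combinatorial matching that is then produced using the extension map $\w\Lambda$ from Theorem~\ref{thm_ext_map_wN} together with the equivariance properties established in Sections~\ref{loc}--\ref{loc_reg}. First I would observe that since $G\lhd \w G$ with cyclic quotient $\w G/G\cong \Z(\w G)^F/\dots$, the set $\cG$ decomposes into $\w G$-orbits of $\Irr(G)$ fattened by the linear characters $\Irr(\w G\mid 1_G)$, and likewise $\cN$ decomposes using $\Irr(\w N\mid 1_N)$; the compatibility with central characters $\nu\in\Irr(\Z(\w G))$ and with the $\Irr(\w G\mid 1_G)$-action is then essentially a bookkeeping constraint that the bijection must respect on each such orbit. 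The natural strategy is to build $\w\Omega$ on $\w G$-orbits rather than on individual characters, exactly as in \cite[\S 5]{CS15}: one starts from the McKay bijection $\Irr_{\ell'}(G)\to\Irr_{\ell'}(N)$ that is implicit in the work of Sp\"ath and Malle on $d$-Harish-Chandra series (or one uses the parametrization $\Pi$ of $\Irr(N)$ from the proof of Theorem~\ref{thm_loc_gen} together with the corresponding parametrization of $\Irr_{\ell'}(G)$ by $(s,\lambda)$ with $A(s)^F_\lambda=1$), and then one extends it to $\w G$- resp.\ $\w N$-levels.

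The key steps, in order, would be: (1) recall the equivariant bijection $\Irr_{\ell'}(G)\to\Irr_{\ell'}(N)$ underlying the McKay count in type $\tC$ — this is available from $d$-Harish-Chandra theory and the centralizer description of Section~\ref{loc_reg} — and record its equivariance under $(\w G\rtimes D)_{\bS}$, which follows from Theorems~\ref{MainTh}, \ref{thm_sec6} and \ref{thm_ext_map_wN}; (2) pass from $\Irr_{\ell'}(G)$ to $\cG$ by Clifford theory above $G\lhd\w G$, and from $\Irr_{\ell'}(N)$ to $\cN$ by Clifford theory above $N\lhd\w N$ — here the extension map $\w\Lambda$ for $\w C\lhd\w N$ of Theorem~\ref{thm_ext_map_wN} gives an explicit handle on $\Irr(\w N)$ in terms of $\Irr(\w C)$ and characters of the relative Weyl groups, mirroring Lusztig's Jordan decomposition for $\Irr(\w G)$; (3) check that the two "above" constructions are compatible, i.e.\ that the numbers of constituents, their stabilizers in $\w G/G$ resp.\ $\w N/N$, and the linear characters by which they differ all match — this is where the identification $\w N/N\cong\w G/G$ (both being $\Z(\w G)^F$-related) and the $\w\Lambda$-compatibility with the $\Irr(\w C\mid 1_C)$-action are used; (4) assemble $\w\Omega$ and verify it commutes with the action of $(\w G\rtimes D)_{\bS}$, that it preserves central characters (because restriction to $\Z(\w G)$ and to $\Z(\w N)\supseteq\Z(\w G)$ are compatible under the construction), and that $\w\Omega(\chi\delta)=\w\Omega(\chi)\Res^{\w G}_{\w N}(\delta)$, which is built in by construction in step (2)–(3).

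The main obstacle I expect is step (3): controlling, uniformly over all semisimple $s$, how an $\ell'$-character $\chi$ of $G$ extends to $\w G$ — equivalently the group $A(s)^F_\lambda$ and its interaction with $\Z(\w G)^F$-translation — and matching this against the analogous data on the local side, where the role of $A(s)^F_\lambda$ is played by the stabilizers $W_\xi/W_{\w\xi}$ and the characters $\nu$ of Proposition~\ref{prop6_7}. In the regular case this is essentially the content of Section~\ref{loc_reg}, but in the non-regular case one has $N=N_1\times G_2$ with a genuine symplectic factor $G_2$, and one must invoke Theorem~\ref{MainTh} (and the fact that diagonal automorphisms coming from $\bT$ act trivially on $\Irr(T_1)$) to see that the extension behaviour on the $G_2$-factor is exactly parallel to the global extension behaviour — so that the bijection on the $G_2$-part can simply be taken to be compatible with Clifford theory above $G_2$ in both $\w G$ and $\w N$. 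Once that parallelism is in place, the remaining verifications of equivariance and of the central-character and linear-character compatibilities are routine diagram-chasing of the kind carried out in the proof of Theorem~\ref{thm_loc_gen}.
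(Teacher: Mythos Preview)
Your proposal is correct and follows essentially the same approach as the paper: the paper's own proof consists entirely of the sentence ``Using the extension map from Theorem~\ref{thm_ext_map_wN} the statement follows from the proof given in Section~6 of \cite{CS15}, where it was applied in type~$\tA$,'' and what you have written is a faithful expansion of what that deferred argument contains. The only minor difference in emphasis is that \cite[\S 6]{CS15} works directly at the $\w G$- and $\w N$-level (parametrizing $\cG$ by Jordan decomposition and $\cN$ via $\w\Lambda$ and relative Weyl group data) rather than first fixing a bijection on $\Irr_{\ell'}(G)$ and then lifting, but the two viewpoints are equivalent here and your identification of the crucial input---the extension map $\w\Lambda$ compatible with multiplication by $\Irr(\w C\mid 1_C)$, together with Theorems~\ref{MainTh} and~\ref{thm_sec6} for the non-regular $G_2$-factor---is exactly what the paper relies on.
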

\begin{proof}
Using the extension map from Theorem \ref{thm_ext_map_wN} the statement follows from the proof given in Section 6 of \cite{CS15}, where it was applied in type $\tA$.
\end{proof}

\begin{proof}[Proof of Theorem \ref{thmA}]
%By the application of this criterion for the inductive McKay condition we prove Theorem \ref{thmA}. 
Let $\ell$ be a prime, $S$ a simple group of Lie type $\tC_l$. By \cite[1.1]{S12} and \cite{MS15} we may assume that $\ell$ is different from the defining characteristic and odd. According to \cite[5.2]{CS13} we may assume that the defining characteristic is odd. According to \cite[Tab.~6.1.3]{GLS3} the Schur multiplier of $S$ is non-exceptional and some $G$ defined as in \ref{not_loc_gen} (together with \ref{n_typC}) is the universal covering group of $S$. 

We now review the assumptions of \cite[2.12]{S12}. Arguing as in the proof of Proposition 7.2 of \cite{CS15} we see that assumption 2.12(i) is satisfied with the groups $\w G \rtimes D$ defined as in \ref{not_loc_gen}. Let $d$ be the order of $q$ in $(\ZZ/\ell \ZZ)^\times$ and $\bS_0$ a Sylow $d$-torus of $(\bG,F)$. According to \cite[5.14]{MaH0} the group $N:=\NNN_G(\bS_0)$ is $(\w G\rtimes D)_Q$-stable for some Sylow $\ell$-subgroup $Q$ of $N$ and hence satisfies 2.12(ii). For $\w N:=\NNN_{\wG}(\bS)$ the quotients $\w G/G$ and $\w N/N$ are cyclic and hence 2.12(iii) naturally holds. The remaining assumptions of \cite[2.12]{S12} are the Theorems \ref{MainTh}, \ref{thm_sec6} and \ref{thm_bij}. This finishes our proof. %Now Theorem 2.12 of \cite{S12} implies the statement.
\end{proof}

\end{document}